\DeclareMathAlphabet{\mathpzc}{OT1}{pzc}{m}{it} 
\newtheorem{Th}{Theorem}[section]              
\newtheorem{Lem}{Lemma}[section]
\newcommand{\G}{\Gamma}
\DeclareMathOperator{\supp}{supp}
\title[UMD Banach spaces, BMO functions and Bessel convolutions]
      {Characterization of Banach valued BMO functions and UMD Banach spaces by using Bessel convolutions}
\author[J.J. Betancor]{Jorge J. Betancor}
\author[A.J. Castro]{Alejandro J. Castro}
\author[L. Rodríguez-Mesa]{Lourdes Rodríguez-Mesa}
\address{\newline
        Jorge J. Betancor, Alejandro J. Castro, and Lourdes Rodríguez-Mesa \newline
        Departamento de Análisis Matemático,
        Universidad de la Laguna, \newline
        Campus de Anchieta, Avda. Astrofísico Francisco Sánchez, s/n, \newline
        38271, La Laguna (Sta. Cruz de Tenerife), Spain}
\email{jbetanco@ull.es, ajcastro@ull.es, lrguez@ull.es}
\keywords{$\gamma$-radonifying operators, BMO, Bessel convolution, UMD Banach spaces}
\subjclass[2010]{46E40, 42A50}
\thanks{The authors are partially supported by MTM2010/17974. The second author is also supported by a FPU grant from the Government of Spain}
\begin{document}


  \maketitle                

  \begin{abstract}
    In this paper we consider the space $BMO_o(\mathbb{R},X)$ of bounded mean oscillations and odd functions on $\mathbb{R}$ taking values in a UMD Banach
    space $X$. The functions in $BMO_o(\mathbb{R},X)$ are characterized by Carleson type conditions involving Bessel convolutions and $\gamma$-radonifying norms.
    Also we prove that the UMD Banach spaces are the unique Banach spaces for which certain $\gamma$-radonifying Carleson inequalities for Bessel-Poisson integrals
    of $BMO_o(\mathbb{R},X)$ functions hold.
  \end{abstract}

    \section{Introduction} \label{sec:intro}

    As it is well known the Hilbert transform $H$ defined by
    $$Hf(x)= \text{P.V.} \int_{\mathbb{R}} \frac{f(y)}{x-y} dy, \quad \text{a.e. } x \in \mathbb{R},$$
    is bounded from $L^p(\mathbb{R})$ into itself, for every $1<p<\infty$, and from $L^1(\mathbb{R})$ into $L^{1,\infty}(\mathbb{R})$. If $X$ is a Banach space,
    the Hilbert transform is defined on $L^p(\mathbb{R}) \otimes X$, $1 \leq p < \infty$, in the natural way. A Banach space $X$ is said to be a UMD space when the
    Hilbert transform can be extended to the Bochner-Lebesgue space $L^p(\mathbb{R},X)$ as a bounded operator from $L^p(\mathbb{R},X)$ into itself, for some
    (equivalently, for any) $1<p<\infty$. Equivalent definitions and other properties and applications of UMD Banach spaces can be found in \cite{Amann},
    \cite{Bou}, \cite{Bu1}, \cite{Bu2}, \cite{Bu3}, \cite{GiWe}, \cite{Hy2}, \cite{HW}, \cite{KaWe} and \cite{Rub},  amongst others.\\

    In \cite{BCFR1} and \cite{BCFR2} it was studied the space $BMO_o(\mathbb{R})$ of odd bounded mean oscillation on $\mathbb{R}$. $BMO_o(\mathbb{R})$ was characterized
    by using Carleson measures involving Poisson and heat integrals associated with Bessel operators (\cite[Theorem 1.1]{BCFR2}). In this paper we consider the Banach valued
    odd $BMO$ space. Assume that $X$ is a Banach space. We say that a function $f \in L^1_{loc}(\mathbb{R},X)$ belongs to $BMO(\mathbb{R},X)$, when
    $$\|f\|_{BMO(\mathbb{R},X)} = \sup_{I \subset \mathbb{R}} \frac{1}{|I|} \int_I \|f(x)-f_I\|_X dx<\infty,$$
    where the supremum is taken over all bounded intervals $I \subset \mathbb{R}$. Here $f_I = \frac{1}{|I|} \int_I f(x)dx$,  the integral being
    understood in the Bochner sense, and $|I|$ denotes the length of $I$. By $BMO_o(\mathbb{R},X)$
    we represent the space of all odd functions
    in $BMO(\mathbb{R},X)$. According to the John-Nirenberg inequality we can see that an odd function
    $f \in L^1_{loc}(\mathbb{R},X)$ is in $BMO_o(\mathbb{R},X)$ if, and only if, for some (equivalently, for any)
    $1 \leq p < \infty$, there exists $C>0$ such that
    \begin{equation} \label{1.1}
        \left( \frac{1}{|I|} \int_I \|f(x)-f_I\|_X^p dx \right)^{1/p}
            \leq C,
    \end{equation}
    for every interval $I=(a,b)$, $0<a<b<\infty$, and
    \begin{equation}\label{1.2}
        \left( \frac{1}{|I|} \int_I \|f(x)\|_X^p dx \right)^{1/p}
            \leq C,
    \end{equation}
    for each interval $I=(0,b)$, $0<b<\infty$. Moreover, for every $f \in BMO_o(\mathbb{R},X)$ and
    $1\leq p < \infty$, $\|f\|_{BMO(\mathbb{R},X)}$ is equivalent to the infimum of the constants $C$ satisfying  \eqref{1.1} and \eqref{1.2}. \\

    For every $\lambda>0$, we consider the Bessel operator $\Delta_{\lambda} = -x^{-\lambda} \frac{d}{dx}x^{2\lambda}\frac{d}{dx}x^{-\lambda}$ on $(0,\infty)$.
    If $J_\nu$ denotes the Bessel function of the first kind and order $\nu$, we have that
    \begin{equation}\label{2.1}
        \Delta_{\lambda,x}\left( \sqrt{xy} J_{\lambda-1/2}(xy) \right) =y^2  \sqrt{xy} J_{\lambda-1/2}(xy), \quad x,y \in (0,\infty).
    \end{equation}
    The Hankel transformation $h_\lambda$ is defined by
    $$h_\lambda(f)(x)=\int_0^\infty \sqrt{xy} J_{\lambda-1/2}(xy) f(y) dy, \quad x \in (0,\infty),$$
    for every $f \in L^1(0,\infty)$. The transformation $h_\lambda$ plays in the Bessel setting the same role as the Fourier transformation in the classical
    (Laplacian) setting. $h_\lambda$ is an isometry in $L^2(0,\infty)$ and $h_\lambda^{-1}=h_\lambda$ on $L^2(0,\infty)$.\\

    We represent by $S_\lambda(0,\infty)$ the space constituted by all functions $\phi \in C^\infty(0,\infty)$ such that, for every $m,k \in \mathbb{N}$,
    $$\beta_{m,k}^\lambda(\phi)=\sup_{x \in (0,\infty)} \left| x^m \left( \frac{1}{x} \frac{d}{dx}\right)^k \left( x^{-\lambda} \phi(x)\right) \right| < \infty. $$
    $S_\lambda(0,\infty)$ is endowed with the topology generated by the family of seminorms $\{\beta_{m,k}^\lambda\}_{m,k \in \mathbb{N}}$. The Hankel transformation
    $h_\lambda$ is an automorphism in $S_\lambda(0,\infty)$ (\cite[Lemma 8]{Ze}).
    The dual space of $S_\lambda(0,\infty)$ is denoted by $S_\lambda(0,\infty)'$.\\

    If $f,g \in L^1((0,\infty),x^\lambda dx)$ the Bessel (also called Hankel) convolution $\#_\lambda$ is defined by
    $$(f \#_\lambda g)(x)= \int_0^\infty f(y) \,_\lambda\tau_x (g)(y) dy, \quad x \in (0,\infty),$$
    where the Bessel translation $\,_\lambda\tau_x (g)$ of $g$ is given by
    $$\,_\lambda\tau_x (g)(y)
        = \frac{(xy)^\lambda}{\sqrt{\pi} 2^{\lambda-1/2}\G(\lambda)} \int_0^\pi (\sin \theta)^{2\lambda-1}
          \frac{g \left( \sqrt{(x-y)^2+2xy(1-\cos \theta)} \right)}{\left((x-y)^2+2xy(1-\cos \theta)\right)^{\lambda/2}}  d\theta,
      \quad x,y \in (0,\infty),$$
    (see \cite{GS}). By \cite[Theorem 2.d]{Hi} we have the following interchange formula,
    \begin{equation}\label{hank_conv}
        h_\lambda(f \#_\lambda g)(x) = x^{-\lambda} h_\lambda(f)(x) h_\lambda( g)(x), \quad x \in (0,\infty).
    \end{equation}
    From \eqref{hank_conv} it is clear that $\#_\lambda$ is a commutative and associative operation in
    $L^1((0,\infty),x^\lambda dx)$.\\

    The $\#_\lambda$-convolution was studied on $S_\lambda(0,\infty)$ and $S_\lambda(0,\infty)'$ in \cite{MB}.
    The mapping $(\phi, \psi) \longmapsto \phi \#_\lambda \psi $ is bilinear and continuous from
    $S_\lambda(0,\infty) \times S_\lambda(0,\infty)$ into $S_\lambda(0,\infty)$ (\cite[Proposition 2.2, $(i)$]{MB}).\\

    The Hankel convolution $f \#_\lambda \phi$ is defined when $f \in L^1((0,\infty),x^\lambda dx; X)$ and
    $\phi \in L^1((0,\infty),x^\lambda dx)$ in the natural way, that is, understanding the integrals in the Bochner's sense.\\

    If $\phi: \Upsilon \longrightarrow \mathbb{R}$, where $\Upsilon=\mathbb{R}$ or $\Upsilon=(0,\infty)$, we denote by
    $\phi_{(t)}$ and $\phi_t$, $t>0$, the following dilated functions
    $$\phi_{(t)}(x)=\phi_{(t)}^\lambda(x)=\frac{1}{t^{\lambda+1}} \phi\left(\frac{x}{t}\right), \quad
      \phi_{t}(x)=\frac{1}{t} \phi\left(\frac{x}{t}\right) , \quad t \in (0,\infty), \ x \in \Upsilon.$$
    While $\phi_t$ is the classical dilated function, $\phi_{(t)}$ is the one adapted to the $\Delta_\lambda$-setting.\\

    By $\gamma\left(L^2((0,\infty)^2,\frac{dydt}{t^2});X\right)$ we represent the Gauss space, also called
    $\gamma$-radonifying operators from $L^2((0,\infty)^2,\frac{dydt}{t^2})$ into $X$
    (see \cite{HW} and \cite{Ne} for general definitions and properties). Suppose that $F : (0,\infty)^2 \longrightarrow X$
    is weakly-$L^2((0,\infty)^2,\frac{dydt}{t^2};X)$, that is, $G=\langle F, x'\rangle \in L^2((0,\infty)^2,\frac{dydt}{t^2})$,
    for every $x' \in X'$, where $X'$ denotes the dual space of $X$. We say that
    $F \in \gamma\left(L^2((0,\infty)^2,\frac{dydt}{t^2});X\right)$ when the operator $I_F$ defined by
    $$I_F(h)=\int_{(0,\infty)^2} F(y,t)h(y,t) \frac{dtdy}{t^2}, \quad h \in L^2\left((0,\infty)^2,\frac{dydt}{t^2}\right),$$
    belongs to $\gamma\left(L^2((0,\infty)^2,\frac{dydt}{t^2});X\right)$, that is,
    \begin{align*}
    \|I_F\|&_{\gamma\left(L^2((0,\infty)^2,\frac{dydt}{t^2});X\right)}
        =  \sup \left\| \sum_j \gamma_j
            I_F(h_j) \right\|_{L^2(\Omega,X)} <\infty,
   \end{align*}
   where the supremum is taken over all finite orthonormal families $\{h_j\}$ in $L^2((0,\infty)^2,\frac{dydt}{t^2})$
   and $\{\gamma_j\}_{j=1}^\infty$ is a sequence of independent complex standard Gaussian random variables on
   some probability space $(\Omega, \mathcal{A},\mathbb{P})$. In this case, we write
   $\|F\|_{\gamma\left(L^2((0,\infty)^2,\frac{dydt}{t^2});X\right)}$ to refer
   $\|I_F\|_{\gamma\left(L^2((0,\infty)^2,\frac{dydt}{t^2});X\right)}$. If $F$ is not weakly-$L^2((0,\infty)^2,\frac{dydt}{t^2};X)$
   we say that $\|F\|_{\gamma\left(L^2((0,\infty)^2,\frac{dydt}{t^2});X\right)}=\infty$.\\

   Hytönen and Weis \cite{HW} introduced vector valued versions of some functionals considered by Coifman, Meyer
   and Stein \cite{CMS} to define tent spaces. Here we work with truncated
   versions of Hytönen and Weis' functionals. For every $x,r \in (0,\infty)$ we define the truncated
   cones
   $$\G_+(x)=\{(y,t) \in (0,\infty)^2 : |x-y|<t\},$$
   and
   $$\G_+^r(x)=\{(y,t) \in (0,\infty)^2 : |x-y|<t<r\}.$$
   Let $F : (0,\infty)^2 \longrightarrow X$ be a strongly measurable function. We define the conical square function
   $A^+(F)$ as follows
   $$A^+(F)(x)
    =\|F(y,t) \chi_{\G_+(x)}(y,t)\|_{\gamma\left(L^2((0,\infty)^2,\frac{dydt}{t^2});X\right)}, \quad x \in (0,\infty).$$
    Also, we consider for every $r>0$ the truncated square function $A^+(F\big|r)$ given by
    $$A^+(F \big|r)(x)
        =\|F(y,t) \chi_{\G_+^{r}(x)}(y,t)\|_{\gamma\left(L^2((0,\infty)^2,\frac{dydt}{t^2});X\right)}, \quad x \in (0,\infty).$$
    Note that, according to \cite[p. 499]{HW}, if $X=\mathbb{C}$, then
    $$A^+(F)(x)
        = \left( \int_{\G_+(x)} |F(y,t)|^2 \frac{dydt}{t^2} \right)^{1/2}, \quad x \in (0,\infty).$$
    For every $0<q<\infty$, we define $C_q^+(F)$ by
    $$C_q^+(F)(z)
        =\sup_{I \owns z} \left( \frac{1}{|I|} \int_I A^+(F \big||I|/2)^q(x) dx \right)^{1/q}, \quad z \in (0,\infty),$$
    where the supremum is taken over all bounded intervals $I \subset (0,\infty)$ such that $z \in I$.
    $C_q^+(F)$ is somehow a $q$-average of the truncated conical square function.\\

    We now state our first result that can be seen as a Bessel version of \cite[Theorem 1.1]{HW}.

    \begin{Th}\label{Th_principal}
        Let $X$ be a UMD Banach space and  $\lambda>1$. Assume that $f$ is an odd $X$-valued function satisfying
        that $(1+x^2)^{-1}f \in L^1(\mathbb{R},X)$ and $\phi \in S_\lambda(0,\infty)$
        such that $\int_0^\infty x^\lambda \phi(x)dx=0$.
        \begin{itemize}
            \item[$(i)$] If $f \in BMO_o(\mathbb{R},X)$, then $C_q^+(f \#_\lambda \phi_{(t)}) \in L^\infty(0,\infty)$,
                for every $0<q<\infty$.
            \item[$(ii)$] If $C_q^+(f \#_\lambda \phi_{(t)}) \in L^\infty(0,\infty)$, for some $0<q<\infty$,
                then $f \in BMO_o(\mathbb{R},X)$.
        \end{itemize}
        Moreover, the quantities $\|f\|_{BMO_o(\mathbb{R},X)}$ and $\|C_q^+(f \#_\lambda \phi_{(t)})\|_{L^\infty(0,\infty)}$,
        $0<q<\infty$, are equivalent.
    \end{Th}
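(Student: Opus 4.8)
The plan is to transplant the Euclidean argument of Hyt\"onen and Weis \cite[Theorem 1.1]{HW} to the Bessel framework, the principal new feature being that $\#_\lambda$ lives on $(0,\infty)$ and that the odd space $BMO_o(\mathbb{R},X)$ is governed by the two \emph{separate} conditions \eqref{1.1} and \eqref{1.2}. Accordingly, for every bounded interval $I\subset(0,\infty)$ I would distinguish two regimes: the \emph{far} regime, in which the centre $z_I$ of $I$ satisfies $z_I>\kappa|I|$ for a fixed $\kappa>0$, where the analysis mimics the classical one and the oscillation estimate \eqref{1.1} is used; and the \emph{near} regime $z_I\le\kappa|I|$, where $I$ is comparable to an interval $(0,b)$ and the size estimate \eqref{1.2} is the relevant one. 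The hypothesis $\lambda>1$ enters throughout to guarantee the convergence of the Bessel translations and integrals and the validity of the underlying Littlewood--Paley theory.

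For part $(i)$ fix such an $I$, put $r=|I|/2$, and decompose $f=(f-c_I)\chi_{\widetilde I}+(f-c_I)\chi_{(0,\infty)\setminus\widetilde I}+c_I$, where $\widetilde I$ is a fixed dilate of $I$, with $c_I=f_{\widetilde I}$ in the far regime and $c_I=0$, $\widetilde I=(0,2b)$ in the near regime. The constant term is annihilated: since $\int_0^\infty x^\lambda\phi(x)\,dx=0$, the interchange formula \eqref{hank_conv} gives $c_I\#_\lambda\phi_{(t)}=0$. For the local term I would invoke the vector-valued $\gamma$-radonifying conical square function estimate
$$\big\|A^+(g\#_\lambda\phi_{(t)})\big\|_{L^2(0,\infty)}\le C\,\|g\|_{L^2((0,\infty),X)},$$
which is exactly where the UMD hypothesis enters, through the $\gamma$-boundedness ($R$-boundedness) of the Bessel--Littlewood--Paley family, combined with the John--Nirenberg bounds \eqref{1.1} or \eqref{1.2} to control $\|(f-c_I)\chi_{\widetilde I}\|_{L^2}^2\lesssim|I|\,\|f\|_{BMO_o(\mathbb{R},X)}^2$. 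For the far term one exploits that $t<r$ on $\G_+^r(x)$ while the support of $(f-c_I)\chi_{(0,\infty)\setminus\widetilde I}$ lies at distance $\gtrsim|I|$ from $I$, so that the rapid decay of $\phi\in S_\lambda(0,\infty)$ and of its Bessel translations, summed over dyadic annuli, yields a pointwise bound on the $\gamma$-norm of this tail. Adding the three contributions gives $C_q^+(f\#_\lambda\phi_{(t)})\in L^\infty(0,\infty)$ for $q=2$, and the passage to arbitrary $0<q<\infty$ is achieved via Kahane's inequality for the Gaussian averages defining the $\gamma$-norm.

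Part $(ii)$ is the harder converse and I would base it on a Calder\'on-type reproducing formula: choose $\psi\in S_\lambda(0,\infty)$ with $\int_0^\infty x^\lambda\psi(x)\,dx=0$ so that, in $S_\lambda(0,\infty)'$, $\int_0^\infty\phi_{(t)}\#_\lambda\psi_{(t)}\,\frac{dt}{t}$ acts as the $\#_\lambda$-identity, whence $f=\int_0^\infty(f\#_\lambda\phi_{(t)})\#_\lambda\psi_{(t)}\,\frac{dt}{t}$ up to constants governed by \eqref{1.2}. To recover \eqref{1.1} and \eqref{1.2} I would argue by duality: pairing $f$ with an $H^1$-atom $g$ adapted to the Bessel setting, the reproducing formula rewrites $\langle f,g\rangle$ as a double integral over $(0,\infty)^2$, which is then estimated by the $\gamma$-radonifying tent-space duality of \cite{HW}; the Carleson hypothesis bounds the factor $f\#_\lambda\phi_{(t)}$ in the $T^\infty_\gamma$ (Carleson) norm, while $g\#_\lambda\widetilde\psi_{(t)}$ lies in the predual $T^1_\gamma$ with norm $\lesssim\|g\|_{H^1}$. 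The a priori growth assumption $(1+x^2)^{-1}f\in L^1(\mathbb{R},X)$ ensures that all these integrals converge and that $f$ may legitimately be tested against finitely supported atoms.

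The main obstacle is twofold. First, one must establish the vector-valued $\gamma$-radonifying Littlewood--Paley inequalities for $\#_\lambda$ on $(0,\infty)$ --- both the square-function bound used in $(i)$ and the tent-space duality used in $(ii)$ --- and it is precisely here that UMD is indispensable; this is the technical heart of the argument. Second, the interface between the $\#_\lambda$-structure and the odd $BMO$ conditions forces the near-origin analysis to be carried out separately at every step, and one must verify that the bounds obtained in the near regime through \eqref{1.2} match, up to constants, those obtained in the far regime through \eqref{1.1}, thereby delivering the asserted equivalence $\|f\|_{BMO_o(\mathbb{R},X)}\simeq\|C_q^+(f\#_\lambda\phi_{(t)})\|_{L^\infty(0,\infty)}$ uniformly in $0<q<\infty$.
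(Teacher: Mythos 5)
Your overall architecture is the right one --- the same local/far/constant splitting for part $(i)$ and the same duality-plus-reproducing-formula scheme for part $(ii)$ that the paper uses --- but there is a genuine error in part $(i)$, and it sits exactly at the point where the Bessel setting departs from the Euclidean one. You claim that the constant term is annihilated, i.e.\ that $\int_0^\infty x^\lambda\phi(x)\,dx=0$ together with the interchange formula \eqref{hank_conv} forces $c_I\#_\lambda\phi_{(t)}=0$. This is false: the Euclidean identity $\widehat{1*\phi}=\hat\phi(0)\,\delta_0=0$ has no analogue here, because the function that plays the role of the $\#_\lambda$-invariant is $x^\lambda$, not the constant $1$. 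Indeed $\,_\lambda\tau_x(z^\lambda)(y)=c\,(xy)^\lambda$, so the moment condition kills $z^\lambda\#_\lambda\phi_{(t)}$, but $1\#_\lambda\phi_{(t)}=\int_0^\infty \,_\lambda\tau_y(\phi_{(t)})(z)\,dz$ does not vanish. The paper says so explicitly (``In general, $f_{3I}\#_\lambda\phi_{(t)}\neq 0$, even when the function $x^\lambda\phi$ has vanishing integral''), and the entire second half of its proof of $(i)$ --- the bounds \eqref{J1}--\eqref{J2.2} on the pieces $J_1,J_2,J_3$ of $\int_0^\infty\,_\lambda\tau_y(\phi_{(t)})(z)\,dz$, followed by the case analysis $2|I|<x_I$ versus $|I|/2\le x_I\le 2|I|$ leading to \eqref{objt3} --- is devoted to estimating precisely this surviving term, using $\|f_{3I}\|_X\le C\frac{x_I+|I|}{|I|}\|f\|_{BMO_o(\mathbb{R},X)}$, which comes from \eqref{1.2}. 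Your near/far dichotomy is the right instinct, but you deploy it only to choose the subtracted constant; the hard work is showing that the non-zero term $f_{3I}\#_\lambda\phi_{(t)}$ still satisfies the truncated Carleson bound, and your proposal contains no argument for that.

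A secondary gap: proving the case $q=2$ and invoking Kahane's inequality does not yield $C_q^+\in L^\infty$ for $q>2$. Kahane compares Gaussian moments on the probability space $\Omega$, whereas $C_q^+$ is a $q$-average in the space variable $x$ of the already-fixed $\gamma$-norm $A^+(F\,\big|\,|I|/2)(x)$; for $q>2$ one cannot pass from the $L^2(I)$-average to the $L^q(I)$-average without a further (John--Nirenberg type) argument. The paper avoids this by proving the $L^p$-boundedness of $A^+(f\#_\lambda\phi_{(t)})$ for every $1<p<\infty$ (Lemma \ref{Lem_Lp}) and running the local estimate directly at exponent $q$ after the harmless reduction to $q>1$. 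Part $(ii)$ of your proposal does match the paper's argument (complementary function, polarization identity, the $C_q^+$--$A^+$ duality, $H^1_o$-atoms, and the a priori growth hypothesis to justify convergence) and is sound at the level of a sketch.
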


    The Poisson semigroup $\{P_t^\lambda\}_{t>0}$ associated with the Bessel operator $\Delta_\lambda$
    (that is, the semigroup of operator generated by $-\sqrt{\Delta_\lambda}$) is given by
    $$P_t^\lambda(f)(x)=\int_0^\infty P_t^\lambda(x,y)f(y)dy, \quad t,x \in (0,\infty),$$
    where the Poisson kernel $P_t^\lambda(x,y)$, $ t,x,y \in (0,\infty),$ is defined by (\cite{Weinst})
    \begin{equation*}\label{Bessel_kernel}
        P_t^\lambda(x,y)
            =\frac{2\lambda (xy)^\lambda t}{\pi}
            \int_0^\pi \frac{(\sin \theta)^{2\lambda-1}}{[(x-y)^2+t^2+2xy(1-\cos \theta)]^{\lambda+1}} d\theta, \quad t,x,y \in (0,\infty).
    \end{equation*}
    The semigroup $\{P_t^\lambda\}_{t>0}$ is contractive in $L^p(0,\infty)$, $1 \leq p \leq \infty$, but
    it is not Markovian. \\

    In \cite{BFMT} Littlewood-Paley $g$-functions associated with $\{P_t^\lambda\}_{t>0}$ acting on Banach valued functions were defined. If $1<q<\infty$ and
    $f : (0,\infty) \longrightarrow X$ is a strongly measurable function the $g_q^\lambda$-function of $f$ is defined
    by
    $$g_q^\lambda(f)(x)
        = \left( \int_0^\infty \| t \partial_t P_t^\lambda(f)(x) \|_X^q \frac{dt}{t} \right)^{1/q}, \quad x \in (0,\infty).$$
    Those Banach spaces that admit a $q$-uniformly convex or $q$-uniformly smooth (see \cite{Pi} for definitions)
    equivalent norms were characterized by using $L^p$-inequalities involving $g_q^\lambda$-functions
    (\cite[Theorems 2.4 and 2.5]{BFMT}).\\

    Also we can define the $q$-conical square function $G_q^\lambda(f)$ of $f$ associated to the Poisson semigroup
    $\{P_t^\lambda\}_{t>0}$ by
    $$G_q^\lambda(f)(x)
        = \left( \int_{\G_+(x)} \| t \partial_t P_t^\lambda(f)(y) \|_X^q \frac{dy dt }{t^2} \right)^{1/q}, \quad x \in (0,\infty).$$
    By taking into account the results in \cite{MTX} and by using the ideas developed
    in the proof of \cite[Theorems 2.4 and 2.5]{BFMT} (see also \cite[Proposition 1.3]{BCR}) the Banach spaces having $q$-uniformly
    convex or smooth renorming can be characterized in the following way.

    \begin{Th}\label{Th_conical_Lp}
        Let $X$ be a Banach space and $\lambda>0$.
        \begin{itemize}
            \item[$(i)$]  Suppose that $2 \leq q < \infty$. The space $X$ admits a $q$-uniformly convex equivalent norm if,
            and only if, for some (equivalently, for any) $1<p<\infty$,
            $$\|G_q^\lambda(f)\|_{L^p(0,\infty)}
                \leq C \|f\|_{L^p((0,\infty),X)}, \quad f \in L^p((0,\infty),X).$$
            \item[$(ii)$] Assume that $1<q \leq 2$. The space $X$ can be $q$-uniformly smooth renormed if, and only if,
            for some (equivalently, for any) $1<p<\infty$,
            $$\|f\|_{L^p((0,\infty),X)}
                \leq C \|G_q^\lambda(f)\|_{L^p(0,\infty)} , \quad f \in L^p((0,\infty),X).$$
        \end{itemize}
    \end{Th}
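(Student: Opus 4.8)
The plan is to reduce Theorem \ref{Th_conical_Lp} to the corresponding characterization through the vertical Littlewood--Paley function $g_q^\lambda$ already established in \cite[Theorems 2.4 and 2.5]{BFMT}. Those results assert, for $2\le q<\infty$ (resp. $1<q\le 2$), that $X$ admits a $q$-uniformly convex (resp. $q$-uniformly smooth) equivalent norm if and only if $\|g_q^\lambda(f)\|_{L^p(0,\infty)}\le C\|f\|_{L^p((0,\infty),X)}$ (resp. the reverse inequality) holds, for some/any $1<p<\infty$. Hence it suffices to establish the auxiliary equivalence
$$\|G_q^\lambda(f)\|_{L^p(0,\infty)} \simeq \|g_q^\lambda(f)\|_{L^p(0,\infty)}, \quad f\in L^p((0,\infty),X),$$
valid for \emph{every} Banach space $X$, every $1<p<\infty$ and every $1<q<\infty$, with constants independent of $f$. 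Because this equivalence carries no geometric hypothesis on $X$, combining it with the two implications of \cite[Theorems 2.4 and 2.5]{BFMT} yields parts $(i)$ and $(ii)$ simultaneously and also settles the independence of the condition with respect to $p$.

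The observation that makes the auxiliary equivalence tractable is that the integrand $H(y,t)=\|t\partial_t P_t^\lambda(f)(y)\|_X$ is a nonnegative \emph{scalar} function of $(y,t)\in(0,\infty)^2$; thus $g_q^\lambda(f)$ and $G_q^\lambda(f)$ are, respectively, the radial $g$-function and the conical area function of $H$, and the matter reduces to a scalar area-versus-$g$-function comparison. First I would record the elementary identity $\|G_q^\lambda(f)\|_{L^q}\simeq\|g_q^\lambda(f)\|_{L^q}$, obtained by Fubini once one computes $\int_{\{x>0:\,|x-y|<t\}}dx\simeq t$. For general $p$ the comparison rests on the regularity of the $X$-valued function $u(y,t)=t\partial_t P_t^\lambda(f)(y)$: the explicit Bessel--Poisson kernel furnishes gradient bounds showing that $\|u(y,t)\|_X$ is controlled by the average of $\|u\|_X$ over the Whitney region $\{(y',s):|y-y'|<s,\ t/2<s<2t\}$, which has measure $\simeq t^2$ and lies inside $\G_+(x')$ for all $x'$ near $y$. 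Crucially, only the scalar inequality $\big|\,\|u(p)\|_X-\|u(q)\|_X\,\big|\le \|u(p)-u(q)\|_X$ together with these vector gradient estimates is used, so no property of $X$ intervenes. From this sub-mean-value property the standard tent-space manipulation of \cite[Proposition 1.3]{BCR} and \cite{MTX}, combined with the Hardy--Littlewood maximal theorem on $(0,\infty)$, gives $\|g_q^\lambda(f)\|_{L^p}\lesssim\|G_q^\lambda(f)\|_{L^p}$, while the reverse inequality follows by the dual argument between the exponents $q$ and $q'$.

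The gradient and sub-mean-value estimates for $u(y,t)=t\partial_t P_t^\lambda(f)(y)$ are exactly where the Bessel structure must be treated with care, and I expect this to be the main obstacle. Since $\{P_t^\lambda\}_{t>0}$ is contractive but not Markovian, the abstract semigroup machinery of \cite{MTX} cannot be invoked verbatim; instead the required pointwise bounds must be read off from the explicit kernel $P_t^\lambda(x,y)$ and its $t$- and $x$-derivatives, with attention to the weight $x^\lambda$ and to the behaviour of the kernel as $x,y\to 0^+$ and along the diagonal. Once these kernel estimates are secured, the Whitney/maximal scheme transplants the scalar area-versus-$g$ comparison to the Bessel setting uniformly in $X$, and the theorem follows. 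Finally, the mutual duality of the $q$-uniformly convex and $q'$-uniformly smooth renormings, matching the ranges $q\ge 2$ and $q'\le 2$, shows that only one of the two comparison directions needs to be proved in full, the other being its dual.
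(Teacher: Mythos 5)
Your reduction hinges on the auxiliary claim that $\|G_q^\lambda(f)\|_{L^p(0,\infty)}\simeq\|g_q^\lambda(f)\|_{L^p(0,\infty)}$ for \emph{every} Banach space $X$ and all $1<p,q<\infty$, and this is where the argument breaks down. The sub-mean-value property of $H(y,t)=\|t\partial_tP_t^\lambda(f)(y)\|_X$ (which is indeed available from the semigroup identity $t\partial_tP_t^\lambda(f)=2P_{t/2}^\lambda\bigl(s\partial_sP_s^\lambda (f)_{|s=t/2}\bigr)$ and kernel bounds, and uses no geometry of $X$) only yields, after Jensen and a change of aperture, the pointwise domination of the \emph{vertical} functional by a \emph{conical} one, i.e. $g_q^\lambda(f)(x)\le C\,G_{q,a}^\lambda(f)(x)$. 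The converse bound $\|G_q^\lambda(f)\|_{L^p}\le C\|g_q^\lambda(f)\|_{L^p}$ is the one you actually need for ``$q$-uniform convexity $\Rightarrow$ conical upper bound'' in $(i)$ and for ``conical lower bound $\Rightarrow$ $q$-uniform smoothness'' in $(ii)$, and none of your ingredients produces it. For $p\ge q$ it holds for arbitrary nonnegative integrands by dualizing $\|G_q^\lambda(f)^q\|_{L^{p/q}}$ against the Hardy--Littlewood maximal function, but for $p<q$ it is \emph{false} for general integrands: taking $H=\chi_B$ with $B=(-\varepsilon,\varepsilon)\times(1,1+\varepsilon)$ one gets $\|\text{vertical}\|_{L^p}\simeq\varepsilon^{1/q+1/p}$ while $\|\text{conical}\|_{L^p}\simeq\varepsilon^{2/q}$, so the ratio blows up as $\varepsilon\to0^+$ whenever $p<q$. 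The sub-mean-value property cannot repair this because it points the wrong way: applied inside the conical integral it merely reproduces a cone of larger aperture. Likewise, ``the reverse inequality follows by the dual argument between the exponents $q$ and $q'$'' is not a proof: tent-space duality pairs the $X$-valued conical $q$-functional of $f$ with the $X'$-valued conical $q'$-functional of an \emph{arbitrary} function on $(0,\infty)^2$, not with the vertical functional of the same $f$; and the duality between $q$-convex and $q'$-smooth renormings does not transfer square-function inequalities without a polarization identity of the type used in Lemma~\ref{Lem_polarizacion}.

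This missing direction is exactly where the geometric hypothesis on $X$ must enter, which is why the paper does not argue through a universal comparison. The intended proof, following \cite{MTX}, \cite[Theorems 2.4 and 2.5]{BFMT} and \cite[Proposition 1.3]{BCR}, first establishes the conical upper bound at the single exponent $p=q$, where your Fubini observation $|\{x>0:|x-y|<t\}|\simeq t$ literally identifies $\|G_q^\lambda(f)\|_{L^q}$ with $\|g_q^\lambda(f)\|_{L^q}$ and the convexity hypothesis is consumed via \cite{BFMT}; it then extends to all $1<p<\infty$ by treating $f\mapsto t\partial_tP_t^\lambda(f)(y)\chi_{\G_+(x)}(y,t)$ as a Calder\'on--Zygmund operator with values in $L^q\bigl((0,\infty)^2,\frac{dydt}{t^2};X\bigr)$, using the kernel estimates for $t\partial_tP_t^\lambda(x,y)$ (this is where the non-Markovian character of $\{P_t^\lambda\}_{t>0}$ forces explicit kernel work, as you correctly anticipate). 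The lower bounds are then obtained by polarization against the conical $q'$-functional of $X'$-valued functions. If you replace your universal equivalence by this two-step scheme (Fubini at $p=q$ plus vector-valued Calder\'on--Zygmund extrapolation, then polarization), the outline becomes the proof the paper has in mind; the one direction of your comparison that does survive, namely $\|g_q^\lambda(f)\|_{L^p}\le C\|G_q^\lambda(f)\|_{L^p}$ via sub-mean-value, is a correct and useful ingredient for the ``if'' half of $(i)$ and the ``only if'' half of $(ii)$.
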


    Also in \cite{BCR} the authors characterize the Banach spaces with a $q$-uniformly convex
    and smooth equivalent norm by using Carleson measures and the space $BMO_o(\mathbb{R},X)$.

    \begin{Th}[{\cite[Theorems 1.1 and 1.2]{BCR}}]\label{Th_BCR}
        Let $X$ be a Banach space and $\lambda>1$.
        \begin{itemize}
            \item[$(i)$] Assume that $2 \leq q < \infty$. Then, $X$ has an equivalent norm which is $q$-uniformly convex
            if, and only if, there exists  $C>0$ such that, for every $f \in BMO_o(\mathbb{R},X)$,
            $$  \sup_{I} \frac{1}{|I|} \int_0^{|I|} \int_I \|t \partial_t P_t^\lambda(f)(y) \|^q_X \frac{dy dt}{t}
                    \leq C \|f\|^q_{BMO_o(\mathbb{R},X)},$$
            where the supremum is taken over all bounded intervals $I$ in $(0,\infty)$.
            \item[$(ii)$] Suppose that $1<q \leq 2$. Then, $X$ has an equivalent $q$-uniformly smooth norm
            if, and only if, there exists  $C>0$ such that, for every odd
             $X$-valued function $f$, satisfying that $(1+x^2)^{-1}f \in L^1(\mathbb{R},X)$,
            $$  \|f\|^q_{BMO_o(\mathbb{R},X)}
                    \leq C  \sup_{I} \frac{1}{|I|}\int_0^{|I|} \int_I \|t \partial_t P_t^\lambda(f)(y) \|^q_X \frac{dy dt}{t},$$
            where the supremum is taken over all bounded intervals $I$ in $(0,\infty)$.
        \end{itemize}
    \end{Th}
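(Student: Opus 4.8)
The plan is to derive Theorem \ref{Th_BCR} from the $L^p$ square function characterization of Theorem \ref{Th_conical_Lp} by transferring those estimates to and from their endpoint ($BMO$/Carleson) versions, in the spirit of the Fefferman--Stein duality and the Coifman--Meyer--Stein tent space calculus, now adapted to the non-Markovian Bessel--Poisson semigroup $\{P_t^\lambda\}_{t>0}$. The analytic backbone I would assemble first consists of the size and H\"older estimates for the kernel of $t\partial_t P_t^\lambda(x,y)$ (with the decay improving as $\lambda$ grows, whence the hypothesis $\lambda>1$), the fact that $\Delta_\lambda(x^\lambda)=0$ so that the null space of $\sqrt{\Delta_\lambda}$ is spanned by the Bessel harmonic $x^\lambda$, and the Calder\'on-type reproducing identity $\int_0^\infty (t\partial_t P_t^\lambda)^2\,\tfrac{dt}{t}=c\,\mathrm{Id}$ valid modulo that null space on a dense class. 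I would also use the elementary Fubini comparison showing that $\int_0^\infty G_q^\lambda(g)(x)^q\,dx$ is comparable to the global Carleson mass $\int_{(0,\infty)^2}\|t\partial_t P_t^\lambda g(y)\|_X^q\,\tfrac{dy\,dt}{t}$.

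For the forward implication of $(i)$, assuming $X$ is $q$-uniformly convex, I would prove the Carleson bound by localization. Fixing a bounded interval $I\subset(0,\infty)$ and a fixed dilate $\widetilde I$, split $f=f_I+(f-f_I)\chi_{\widetilde I}+(f-f_I)\chi_{(0,\infty)\setminus\widetilde I}$. The local term I would control by dominating the truncated Carleson integral by the full one, invoking the Fubini comparison together with Theorem \ref{Th_conical_Lp}$(i)$ at $p=q$, and bounding $\|(f-f_I)\chi_{\widetilde I}\|_{L^q(X)}^q\lesssim |I|\,\|f\|_{BMO_o(\mathbb R,X)}^q$ via \eqref{1.1}. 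For the tail term I would prove a pointwise estimate of the form $\|t\partial_t P_t^\lambda((f-f_I)\chi_{(0,\infty)\setminus\widetilde I})(y)\|_X\lesssim (t/|I|)^\delta\|f\|_{BMO_o(\mathbb R,X)}$ for $(y,t)$ in the Carleson box over $I$ and some $\delta>0$, the surplus power of $t$ making $\int_0^{|I|}(\cdots)^q\,\tfrac{dt}{t}$ converge. The genuine constant $f_I$ I would absorb, for $I$ bounded away from the origin, by the decay of $t\partial_t P_t^\lambda\mathbf 1$ over the box, and, for $I$ abutting the origin, by appealing to \eqref{1.2} and the odd symmetry so that no constant need be subtracted at all.

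The main obstacle is the reverse implication of $(i)$: recovering $q$-uniform convexity from the Carleson bound. By Theorem \ref{Th_conical_Lp}$(i)$ this amounts to the $L^p$ estimate $\|G_q^\lambda f\|_{L^p}\lesssim\|f\|_{L^p(X)}$, equivalently to the martingale cotype $q$ of $X$ (\cite{MTX}, \cite{Pi}). Unlike the scalar $L^2$ situation, there is no free square function estimate against which to interpolate, since the desired $L^p$ bound is itself the property we seek; the endpoint estimate therefore cannot simply be interpolated downward. I would instead extract the defining convexity inequality by hand: testing the Carleson hypothesis on $BMO_o$ functions modelled on a single interval (suitable truncations/atoms, or finite sums encoding a martingale difference sequence) and using the reproducing identity to turn the resulting square function control into the vector-valued inequality that characterizes martingale cotype $q$, after which Pisier's renorming theorem applies. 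Making this test-function reduction quantitative in the presence of the Bessel translation and the non-Markovian kernel is where the real difficulty lies.

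Part $(ii)$ ($1<q\le2$, smoothness) I would treat by duality. Since $X$ is $q$-uniformly smooth precisely when $X'$ is $q'$-uniformly convex, and since $BMO$ pairs with a vector-valued atomic Hardy space, the lower bound $\|f\|_{BMO_o(\mathbb R,X)}^q\lesssim$ (Carleson mass) would follow by pairing $f$ against predual atoms, writing the pairing through the reproducing identity as $c\int_{(0,\infty)^2}\langle t\partial_t P_t^\lambda f, t\partial_t P_t^\lambda g\rangle\,\tfrac{dy\,dt}{t}$, and splitting it by the $T^\infty$--$T^1$ tent space duality: the $f$-factor is the assumed Carleson mass, while the $g$-factor is controlled by the $H^1\to L^1$ square function bound that the $q'$-convexity of $X'$ supplies through Theorem \ref{Th_conical_Lp}$(i)$ applied to $X'$. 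The reverse implication of $(ii)$ parallels that of $(i)$, now via Theorem \ref{Th_conical_Lp}$(ii)$, and inherits the same central difficulty.
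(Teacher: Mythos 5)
First, a point of reference: the paper you are working from does not prove Theorem~\ref{Th_BCR} at all --- it is quoted verbatim from \cite{BCR} (Theorems 1.1 and 1.2 there), so there is no in-paper proof to compare against. The closest internal analogues are Theorem~\ref{Th_principal} (the $\gamma$-radonifying version, proved in Section~\ref{sec:proof_Th}) and the implication $(ii)\Rightarrow(i)$ of Theorem~\ref{Th_caract}. Measured against those, the \emph{direct} halves of your sketch are sound in outline: the localization $f=f_I+(f-f_I)\chi_{\widetilde I}+(f-f_I)\chi_{(0,\infty)\setminus\widetilde I}$, the control of the local piece by the $L^q$ square-function bound of Theorem~\ref{Th_conical_Lp}, the kernel decay for the tail, and the separate treatment of the constant $f_I$ (which, as you correctly note, does not vanish under $t\partial_tP_t^\lambda$ because the semigroup is not Markovian) is exactly the scheme used for $F_1,F_2,F_3$ in Subsection~\ref{subsec:i}; likewise the duality/polarization/tent-space pairing for the forward half of $(ii)$ mirrors Lemmas~\ref{Lem_4.8}--\ref{Lem_polarizacion} and Subsection~\ref{subsec:ii}. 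Be aware, though, that ``absorbing $f_I$ by the decay of $t\partial_tP_t^\lambda\mathbf{1}$'' hides the most delicate estimate of that argument (the $J_2$-type bound $|\int_{y/2}^{2y}\cdots|\lesssim t/y$, which requires genuine cancellation against the classical Poisson kernel, not mere size estimates).

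The genuine gap is in the reverse implications, and you have in effect flagged it yourself (``where the real difficulty lies''). Your proposed device --- testing the Carleson hypothesis on $BMO_o$ functions that ``encode a martingale difference sequence'' and converting this via the reproducing identity into the martingale cotype~$q$ inequality --- is not carried out, and it is unlikely to work as stated: there is no direct way to realize an arbitrary $X$-valued martingale inside the Bessel--Poisson square function of a single test function with comparable constants, precisely because $\{P_t^\lambda\}_{t>0}$ is not Markovian and admits no Rota dilation. The mechanism that actually closes this step (and the one this paper itself uses in the proof of $(ii)\Rightarrow(i)$ of Theorem~\ref{Th_caract}) is different: restrict to an arbitrary finite-dimensional subspace $E\subset X$, on which $f\mapsto\{t\partial_tP_t^\lambda f\}_{t>0}$ is an honest vector-valued Calder\'on--Zygmund operator that is \emph{a priori} bounded on $L^2((0,\infty),E)$ (with an $E$-dependent constant); the hypothesis supplies an $L^\infty\to$ Carleson (i.e.\ $BMO$-type) bound that is uniform in $E$; \cite[Theorem 4.1]{MTX} then upgrades this to $L^p$ bounds uniform in $E$; and Theorem~\ref{Th_conical_Lp} (whose renorming constants depend only on the uniform $L^p$ constant) yields the $q$-uniformly convex, respectively smooth, equivalent norm. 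Without naming this finite-dimensional reduction and the Calder\'on--Zygmund upgrade explicitly, the reverse implications of both $(i)$ and $(ii)$ remain unproved in your sketch.
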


    Note that
    \begin{align*}
        \sup_{I} \frac{1}{|I|}\int_0^{|I|} \int_I \|t \partial_t P_t^\lambda(f)(y) \|^q_X \frac{dy dt}{t}
            \backsimeq  \sup_{I} \frac{1}{|I|} \int_I \int_{\G_+^{|I|/2}(x)} \|t \partial_t P_t^\lambda(f)(y) \|^q_X \frac{dy dt}{t^2} dx.
    \end{align*}

    For every $t>0$, the operator $P_t^\lambda$ is a Hankel convolution operator. Indeed, if we define
    $$k^\lambda(x)= \frac{2^{\lambda+1/2} \G(\lambda+1)}{\sqrt{\pi}} \frac{x^\lambda}{(1+x^2)^{\lambda+1}}, \quad x \in (0,\infty),$$
    then $P_t^\lambda(f)=f \#_\lambda k_{(t)}^\lambda$, $f \in L^p(0,\infty)$, $1 \leq p < \infty$, and $t>0$.
    Moreover, we can write
    $$t \partial_t P_t^\lambda(f)
        = f \#_\lambda t \partial_t k_{(t)}^\lambda, \quad f \in L^p(0,\infty), \ t>0.$$
    Note that
    \begin{align*}
        t \partial_t k_{(t)}^\lambda(x)
            = \frac{1}{t^{\lambda+1}} \left[ -(\lambda+1) k^\lambda \left( \frac{x}{t} \right)
                - \frac{x}{t} \left( \frac{d}{du} k^\lambda \right)(u)_{\Big|u=x/t} \right]
            = \mathpzc{h}_{(t)}^\lambda(x), \quad t,x \in (0,\infty),
    \end{align*}
    where
    $$\mathpzc{h}^\lambda(x)
        = -(\lambda+1) k^\lambda(x)
                - x \frac{d}{dx} k^\lambda(x), \quad x \in (0,\infty).$$
    It is not hard to see that $\int_0^\infty x^\lambda \mathpzc{h}^\lambda(x) dx=0$ and that $\mathpzc{h}^\lambda \notin S_\lambda(0,\infty)$.
    Our next result cannot be deduced from Theorem~\ref{Th_principal} because $\mathpzc{h}^\lambda \notin S_\lambda(0,\infty)$,
    but $\mathpzc{h}^\lambda$ has sufficient decay so the computations given in the proof of
    Theorem~\ref{Th_principal} (see Section~\ref{sec:proof_Th}) remain valid, even for $\lambda>0$ (see \cite[Proposition 4.4]{BCFR2}).

    \begin{Th}\label{Th_Poisson}
        Let $X$ be a UMD Banach space, $\lambda>0$ and $q>0$. Then, there exists $C>0$ such that
        $$\frac{1}{C}\|f\|_{BMO_o(\mathbb{R},X)} \leq \|C_q^+(t \partial_t P_t^\lambda(f)) \|_{L^\infty(0,\infty)} \leq C \|f\|_{BMO_o(\mathbb{R},X)},$$
        for every $f \in BMO_o(\mathbb{R},X)$.
    \end{Th}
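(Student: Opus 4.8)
The plan is to recognize this statement as the precise analogue of Theorem~\ref{Th_principal} for the distinguished kernel $\mathpzc{h}^\lambda$, and to reduce it to the argument already developed there. Recall from the discussion preceding the statement that $t\partial_t P_t^\lambda(f) = f \#_\lambda \mathpzc{h}^\lambda_{(t)}$ and that $\int_0^\infty x^\lambda \mathpzc{h}^\lambda(x)\,dx = 0$. Hence, were it the case that $\mathpzc{h}^\lambda \in S_\lambda(0,\infty)$ and $\lambda > 1$, the result would follow at once from Theorem~\ref{Th_principal}. The entire content of the proof is therefore to dispense with these two hypotheses: to check that the proof of Theorem~\ref{Th_principal} uses membership in $S_\lambda(0,\infty)$ only through finitely many decay and smoothness bounds on the convolver, and that for the concrete kernel $\mathpzc{h}^\lambda$ all of these bounds persist for every $\lambda > 0$.

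First I would isolate, from Section~\ref{sec:proof_Th}, exactly which quantitative properties of the convolver $\phi$ are invoked. In a Carleson-type characterization of this kind one needs only the cancellation $\int_0^\infty x^\lambda \phi\,dx = 0$, together with pointwise size and regularity estimates for the dilated kernel $\mathpzc{h}^\lambda_{(t)}$ and for its Bessel translations ${}_\lambda\tau_x(\mathpzc{h}^\lambda_{(t)})(y)$, along with their first derivatives in $x$ and $t$, of the type $\lesssim t^\varepsilon (t+|x-y|)^{-\lambda-1-\varepsilon}$ for some $\varepsilon > 0$. From the explicit formula for $k^\lambda$ one sees that $\mathpzc{h}^\lambda$ is a smooth multiple of $x^\lambda$ near the origin and decays like $x^{-\lambda-2}$ at infinity, with matching bounds on its derivatives; this is the content of \cite[Proposition 4.4]{BCFR2}, which supplies the required kernel estimates uniformly in $\lambda > 0$. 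In this way the failure of $\mathpzc{h}^\lambda$ to lie in $S_\lambda(0,\infty)$ becomes harmless, and the restriction $\lambda > 1$ is removed.

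With these estimates in hand the two inequalities follow by transcribing the proof of Theorem~\ref{Th_principal}. For the upper bound, fix a bounded interval $I \subset (0,\infty)$ and split $f = c_I + (f-c_I)\chi_{2I} + (f-c_I)\chi_{(0,\infty)\setminus 2I}$; the constant $c_I$ is annihilated by the moment condition, the local part is controlled by the boundedness of the $\gamma$-radonifying conical square function on the UMD space $X$, and the far part is absorbed by the pointwise kernel decay. For the lower bound, one recovers $f$ from $f \#_\lambda \mathpzc{h}^\lambda_{(t)}$ by means of a Calder\'on-type reproducing formula and then dualizes against the tent-space norm to estimate $\|f\|_{BMO_o(\mathbb{R},X)}$ by $\|C_q^+(t\partial_t P_t^\lambda(f))\|_{L^\infty(0,\infty)}$. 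Throughout, the UMD hypothesis is what allows one to compare the $\gamma$-radonifying square function with $L^q$-square functions and to control the attendant vector-valued singular integrals, which in turn renders the value of $q \in (0,\infty)$ immaterial.

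I expect the lower bound to be the main obstacle. The usual reproducing formula exploits both the rapid decay of $\phi$ and the fact that $h_\lambda$ is an automorphism of $S_\lambda(0,\infty)$, and neither is available for $\mathpzc{h}^\lambda$. One must instead exhibit a companion for which an integral of the form $\int_0^\infty (\mathpzc{h}^\lambda \#_\lambda \psi)_{(t)}\,\frac{dt}{t}$ reproduces the identity, with convergence justified by the polynomial decay of $\mathpzc{h}^\lambda$ alone, and then confirm that the associated vector-valued operators stay bounded on the strength of the finite family of decay constants provided by \cite[Proposition 4.4]{BCFR2}. In effect the task reduces to verifying that every appearance of a seminorm $\beta_{m,k}^\lambda(\phi)$ in Section~\ref{sec:proof_Th} may be replaced by one of these constants, which is legitimate for all $\lambda > 0$.
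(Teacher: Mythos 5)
Your overall strategy coincides with the paper's: the paper gives no separate proof of this theorem, only the remark preceding its statement that $t\partial_t P_t^\lambda(f)=f\#_\lambda \mathpzc{h}^\lambda_{(t)}$ with $\int_0^\infty x^\lambda \mathpzc{h}^\lambda(x)\,dx=0$, and that although $\mathpzc{h}^\lambda\notin S_\lambda(0,\infty)$ its decay (via \cite[Proposition 4.4]{BCFR2}) lets every computation of Section~\ref{sec:proof_Th} go through for all $\lambda>0$. So the reduction you propose is the intended one, and your comments on the lower bound correctly locate where membership in $S_\lambda(0,\infty)$ is genuinely used (the reproducing formula of Lemma~\ref{Lem_polarizacion}).

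There is, however, a genuine error in your sketch of the upper bound: you assert that in the decomposition $f=c_I+(f-c_I)\chi_{2I}+(f-c_I)\chi_{(0,\infty)\setminus 2I}$ ``the constant $c_I$ is annihilated by the moment condition.'' This is false for the Hankel convolution. Unlike ordinary convolution, where a mean-zero kernel kills constants, in general $f_{3I}\#_\lambda\phi_{(t)}\neq 0$ even when $\int_0^\infty x^\lambda\phi(x)\,dx=0$; the paper states this explicitly in Section~\ref{subsec:i} and contrasts it with \cite[p.~48]{HW}. Handling the constant term is in fact the bulk of the proof of part $(i)$ of Theorem~\ref{Th_principal}: one needs the bound $\|f_{3I}\|_X\leq C\frac{x_I+|I|}{|I|}\|f\|_{BMO_o(\mathbb{R},X)}$ (which uses the oddness of $f$), the splitting of $\int_0^\infty \,_\lambda\tau_y(\phi_{(t)})(z)\,dz$ into $J_1,J_2,J_3$, the four estimates \eqref{J1}--\eqref{J2.2} (the last of which requires the full kernel decomposition into $\mathcal{K}_2$, $\mathcal{K}_1-\mathcal{K}_{1,1}$, $\mathcal{K}_{1,1}-\mathcal{K}_{1,2}$, $\mathcal{K}_{1,4}$ and $\Psi_t$, together with the cancellation \eqref{nula}), and finally a case analysis according to whether $2|I|<x_I$ or $|I|/2\leq x_I\leq 2|I|$, i.e.\ according to the position of $I$ relative to the origin. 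None of this appears in your proposal, and the analogous estimates for $\mathpzc{h}^\lambda$ (compare the treatment of $\mathcal{M}_1,\mathcal{M}_2,\mathcal{M}_3$ in Section~\ref{sec:Poisson}) are precisely the part of the argument that distinguishes the Bessel setting from the Euclidean one you are implicitly importing.
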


    Note that if $X=\mathbb{C}$, we have that
    \begin{align*}
        \|C_2^+(t \partial_t P_t^\lambda(f)) \|_{L^\infty(0,\infty)}^2
            = & \sup_{I} \frac{1}{|I|} \int_I \int_{\G_+^{|I|/2}(x)} |t \partial_t P_t^\lambda(f)(y) |^2 \frac{dy dt}{t^2} dx \\
            \backsimeq & \sup_{I} \frac{1}{|I|}\int_0^{|I|} \int_I |t \partial_t P_t^\lambda(f)(y) |^2 \frac{dy dt}{t}.
    \end{align*}
    For a general Banach space $X$, even when $q=2$,
    $$\|C_q^+(t \partial_t P_t^\lambda(f)) \|_{L^\infty(0,\infty)}^q
        \not \backsimeq  \sup_{I} \frac{1}{|I|}\int_0^{|I|} \int_I \|t \partial_t P_t^\lambda(f)(y) \|^q_X \frac{dy dt}{t},
        \quad f \in BMO_o(\mathbb{R},X).$$
    Indeed, if $X$ is a UMD Banach space and
    $$\|C_2^+(t \partial_t P_t^\lambda(f)) \|_{L^\infty(0,\infty)}^2
        \backsimeq  \sup_{I} \frac{1}{|I|}\int_0^{|I|} \int_I \|t \partial_t P_t^\lambda(f)(y) \|^2_X \frac{dy dt}{t},
        \quad f \in BMO_o(\mathbb{R},X),$$
    Theorems~\ref{Th_BCR} and \ref{Th_Poisson} implies that $X$ has an equivalent $2$-uniformly convex norm
    and an equivalent $2$-uniformly smooth norm, hence $X$ is isomorphic to a Hilbert space (see \cite[Proposition 3.1]{Kw}
    and \cite[Proposition 4.36]{Pi}), and this is not always possible. For example, $L^p(\mathbb{R})$, $1<p<\infty$, $p \neq 2$, is a UMD space which is not
    isomorphic to a Hilbert space. \\

    We now establish a new characterization of the UMD Banach spaces in terms of the Poisson semigroup
    $\{P_t^\lambda\}_{t>0}$ and the functional $C_q^+$.

    \begin{Th}\label{Th_caract}
        Let $X$ be a Banach space, $\lambda>1$ and $0<q<\infty$. Then the following assertions are equivalent.
        \begin{itemize}
            \item[$(i)$] $X$ is UMD.
            \item[$(ii)$] There exist $C>0$ such that, for every odd $X$-valued function satisfying that $(1+x^2)^{-1}f \in L^1(\mathbb{R},X)$,
            \begin{equation}\label{tdt}
                \frac{1}{C}\|f\|_{BMO_o(\mathbb{R},X)} \leq \|C_q^+(t \partial_t P_t^\lambda(f)) \|_{L^\infty(0,\infty)},
            \end{equation}
            and
            \begin{equation}\label{tdx}
                \left\|C_q^+\left(\int_0^\infty f(z) t D_{\lambda,z} P_t^\lambda(x,z) dz\right)\right\|_{L^\infty(0,\infty)} \leq C \|f\|_{BMO_o(\mathbb{R},X)},
            \end{equation}
            where $D_{\lambda,z}=z^\lambda \frac{d}{dz} z^{-\lambda}$.
        \end{itemize}
    \end{Th}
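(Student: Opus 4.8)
The plan is to prove Theorem~\ref{Th_caract} by establishing the two implications separately, leveraging the machinery already developed in Theorems~\ref{Th_principal}, \ref{Th_BCR}, and \ref{Th_Poisson}. The key observation is that the operator $t D_{\lambda,z} P_t^\lambda(x,z)$ appearing in \eqref{tdx} is, up to the sign and structure of the Bessel derivative, the "dual" or adjoint companion to the time-derivative $t\partial_t P_t^\lambda$ that governs \eqref{tdt}. I would first verify that the kernel $t D_{\lambda,z} P_t^\lambda(x,z)$ can be realized as a Hankel convolution against a dilated kernel of the form $t\,\partial_t$ acting in an appropriate variable, so that the associated conical square function falls under the scope of the computations in Section~\ref{sec:proof_Th}; this identifies \eqref{tdx} as a $\gamma$-radonifying Carleson inequality of exactly the type handled by the "easy half" of Theorem~\ref{Th_principal}(i) applied to a kernel with sufficient decay (as $\mathpzc{h}^\lambda$ was treated after Theorem~\ref{Th_principal}).

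For the implication $(i)\Rightarrow(ii)$, I would argue as follows. Assuming $X$ is UMD, the inequality \eqref{tdt} is immediate from Theorem~\ref{Th_Poisson}, which already provides the lower bound $\frac{1}{C}\|f\|_{BMO_o(\mathbb{R},X)} \leq \|C_q^+(t\partial_t P_t^\lambda(f))\|_{L^\infty(0,\infty)}$. For \eqref{tdx}, I would recognize that $z^\lambda \frac{d}{dz} z^{-\lambda}$ is the first-order factor whose composition with its formal adjoint recovers $\Delta_\lambda$, and that the kernel $t D_{\lambda,z} P_t^\lambda(x,z)$ inherits the same Gaussian-type decay and cancellation as $t\partial_t P_t^\lambda$. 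The upper Carleson estimate then follows by running the proof of Theorem~\ref{Th_principal}(i) (equivalently Theorem~\ref{Th_Poisson}, upper half) with the kernel $\mathpzc{h}^\lambda$ replaced by the corresponding kernel generated by $D_{\lambda,z}$, using that this kernel has mean zero against $x^\lambda\,dx$ and enough decay for the UMD $\gamma$-radonifying Carleson machinery to apply.

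The reverse implication $(ii)\Rightarrow(i)$ is where I expect the genuine content to lie. Here I would argue by contraposition or by exploiting the rigidity already visible in the discussion following Theorem~\ref{Th_Poisson}: the simultaneous validity of a lower bound of type \eqref{tdt} and an upper bound of type \eqref{tdx} forces, via Theorem~\ref{Th_BCR}, that $X$ carries both a $q$-uniformly convex and a $q$-uniformly smooth structure in a compatible way, but only when $X$ is UMD can both the time-derivative and the Bessel-space-derivative conical functionals control $BMO_o$ from their respective sides. The precise mechanism is that \eqref{tdt} encodes the boundedness from below (a smoothness/lower-square-function estimate) while \eqref{tdx} encodes an upper Carleson bound (a convexity/upper-square-function estimate), and the interplay of the $t$-derivative and the $D_\lambda$-derivative mimics the real and imaginary parts of the Bessel analogue of the analytic structure underlying the Hilbert transform. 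I would make this rigorous by showing that the conjunction of \eqref{tdt} and \eqref{tdx} yields the boundedness of a Bessel--Riesz-type transform (the Bessel analogue of $H$) on the relevant $BMO_o$ scale, and then transferring this boundedness to an $L^p(\mathbb{R},X)$ estimate for the Hilbert transform itself via the odd extension and the equivalence between the Bessel and classical settings, thereby recovering the UMD property.

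The main obstacle will be making the $(ii)\Rightarrow(i)$ direction precise: I must carefully identify which Bessel-analytic object (analogous to the conjugate Poisson integral) is controlled by the combination of \eqref{tdt} and \eqref{tdx}, and then show that its boundedness is genuinely equivalent to the UMD property rather than merely to one of the one-sided convexity/smoothness conditions of Theorem~\ref{Th_BCR}. Disentangling the $\gamma$-radonifying norm so that $q=2$ and general $q$ give the same conclusion, and ensuring that no Hilbert-space collapse (as flagged in the remark after Theorem~\ref{Th_Poisson}) is silently forced, will require using the full strength of the vector-valued Hankel--Poisson theory rather than just the scalar square-function identities.
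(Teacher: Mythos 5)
Your high-level skeleton matches the paper's, but two of your load-bearing steps are either wrong or left unresolved. First, in $(i)\Rightarrow(ii)$ you propose to ``verify that the kernel $t D_{\lambda,z} P_t^\lambda(x,z)$ can be realized as a Hankel convolution against a dilated kernel,'' and then invoke the machinery of Theorem~\ref{Th_principal}$(i)$. That verification fails: the operator $\mathcal{Y}_\lambda(f)(x,t)=\int_0^\infty f(y)\,t D_{\lambda,y}P_t^\lambda(x,y)\,dy$ is \emph{not} a Hankel convolution operator (the paper states this explicitly before Lemma~\ref{Lem_Lp_dx}), which is precisely why a separate lemma redoing the entire kernel decomposition $\mathcal{L}_1,\mathcal{L}_2,\mathcal{L}_{1,1},\dots,\mathcal{L}_{1,6}$ is needed for the $f_1$ piece, together with new pointwise bounds \eqref{E4} for the $f_2$ piece and fresh estimates for the $\mathcal{M}_j$ replacing the $J_j$. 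Your plan as stated would stall at the first step; the correct statement is only that the computations of Section~\ref{sec:proof_Th} can be \emph{adapted}, not applied.

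Second, for $(ii)\Rightarrow(i)$ you correctly guess that the two inequalities should combine to give boundedness of a Bessel Riesz transform, but the mechanism you sketch is the wrong one and the right one is missing. The route through Theorem~\ref{Th_BCR} cannot work: the hypotheses \eqref{tdt} and \eqref{tdx} involve $\gamma$-radonifying Carleson functionals, not the $L^q_X$-Carleson integrals of Theorem~\ref{Th_BCR}, and (as you yourself note) deducing simultaneous $q$-convexity and $q$-smoothness would collapse $X$ to a Hilbert space, which is strictly stronger than UMD. The actual argument hinges on an identity you do not identify: via the conjugate Poisson integral and the Cauchy--Riemann type equations one has $\partial_t P_t^\lambda(R_\lambda^* f)=D_{\lambda,x}^* P_t^{\lambda+1}(f)=\int_0^\infty D_{\lambda,z}P_t^\lambda(\cdot,z)f(z)\,dz$, so that \eqref{tdt} applied to $R_\lambda^* f$ chains with \eqref{tdx} applied to $f$ to yield $\|(R_\lambda^* f)_o\|_{BMO_o(\mathbb{R},X)}\le C\|f_o\|_{BMO_o(\mathbb{R},X)}$. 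One then passes from this $BMO$ bound to $L^2((0,\infty),X)$-boundedness of the Calder\'on--Zygmund operator $R_\lambda^*$ by working uniformly on finite-dimensional subspaces and invoking \cite[Theorem 4.1]{MTX}, and finally concludes UMD from the characterization of UMD by $L^p$-boundedness of Bessel Riesz transforms (\cite[Theorem 2.1]{BFMT}) --- not by transferring to the classical Hilbert transform as you suggest. Without the conjugate-Poisson identity and the finite-dimensional transfer step, your outline does not close.
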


    This paper is organized as follows. In Section~\ref{sec:lem} we establish some auxiliary results. We prove
    $L^p$-boundedness properties of $A^+$ and a polarization identity involving
    $\#_\lambda$-convolution. The proof of Theorem~\ref{Th_principal} is presented in Section~\ref{sec:proof_Th}.
    Finally in Section~\ref{sec:Poisson} we give a proof of Theorem~\ref{Th_caract}.\\

    Throughout this paper we denote by $C$ a positive constant that can change in each occurrence.

    \section{Auxiliary results}  \label{sec:lem}

    In this section we establish some results that will be useful in the proof of Theorem~\ref{Th_principal}.
    Firstly we prove two boundedness properties of $A^+$.

    \begin{Lem}\label{Lem_Lp}
        Let $X$ be a UMD Banach space, $\lambda>0$ and $1<p<\infty$. Assume that $\phi \in S_\lambda(0,\infty)$,
        verifying that $x^\lambda \phi$ has vanishing integral over $(0,\infty)$. Then, there exists $C>0$ such that
        $$\|A^+(f \#_\lambda \phi_{(t)})\|_{L^p(0,\infty)}
            \leq C \|f\|_{L^p((0,\infty),X)}, \quad f \in L^p((0,\infty),X).$$
    \end{Lem}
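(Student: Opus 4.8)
The plan is to regard $Q_tf := f\#_\lambda\phi_{(t)}$ as a Bessel Littlewood--Paley family and to show that the conical square function of $(y,t)\mapsto Q_tf(y)$ maps $L^p((0,\infty);X)$ into $L^p(0,\infty)$. First I would extract from the interchange formula \eqref{hank_conv} and the explicit Bessel translation the integral kernel $K_t(y,z)$ of $Q_t$. Since $\phi\in S_\lambda(0,\infty)$ decays rapidly together with all its derivatives, and since the hypothesis $\int_0^\infty x^\lambda\phi(x)\,dx=0$ means $h_\lambda(\phi)(0)=0$ (so that $Q_t$ annihilates constants and enjoys the cancellation needed for a square function), one obtains uniform size and Hörmander-type smoothness estimates for $K_t$, i.e.\ genuine Calderón--Zygmund bounds in the Bessel geometry. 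This step is routine Bessel harmonic analysis, but it must be done carefully because the translation ${}_\lambda\tau_z$ mixes the variables through the factor $\left((y-z)^2+2yz(1-\cos\theta)\right)^{-\lambda/2}$.

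The analytic heart is an $L^2$-type estimate for the $\gamma$-valued object. I would use the $\gamma$-Fubini isomorphism $\gamma(L^2(A)\otimes L^2(B);X)\cong\gamma(L^2(A);\gamma(L^2(B);X))$ to rewrite $\int_0^\infty A^+(Q_\cdot f)(x)^2\,dx$ as a $\gamma$-norm of $(y,t)\mapsto Q_tf(y)$ over a half-space, and then invoke a vector-valued Littlewood--Paley--Stein estimate for the family $\{Q_t\}_{t>0}$. This is precisely where the UMD hypothesis enters: through a $\gamma$-bounded (equivalently $R$-bounded) Hankel-multiplier theorem, adapting the scheme of \cite{HW} and \cite{BFMT} from the Fourier to the Hankel transform, the uniform bounds on $h_\lambda(\phi_{(t)})$ together with its vanishing at the origin yield $\|(y,t)\mapsto Q_tf(y)\|_{\gamma}\lesssim\|f\|_{L^2((0,\infty);X)}$. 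For a non-Hilbertian $X$ one cannot collapse $A^+$ to an honest integral by Fubini, so the finite cotype of $X$ (a consequence of UMD) is used to compare $L^2\big((0,\infty);\gamma\big)$ with the corresponding $\gamma$-space.

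With the Calderón--Zygmund kernel bounds of the first step in hand, I would then upgrade the $L^2$ estimate to all $1<p<\infty$ by vector-valued Calderón--Zygmund theory: the operator $f\mapsto\big((y,t)\mapsto Q_tf(y)\chi_{\G_+(x)}(y,t)\big)$, viewed as acting from $L^p((0,\infty);X)$ into $L^p\big(0,\infty;\gamma(L^2;X)\big)$, has an operator-valued kernel whose smoothness estimates (again supplied by $\phi\in S_\lambda$) give weak $(1,1)$ boundedness, and interpolation with the $L^2$ bound yields the full range $1<p<\infty$. The UMD property guarantees that the relevant kernel family is $R$-bounded, so that the singular-integral machinery applies in the $\gamma$-valued target.

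The main obstacle is the second step. The difficulty is twofold: first, one must replace the elementary Fubini identity $\|A^+F\|_{L^2}^2\simeq\int\!\!\int\|F\|_X^2\,\frac{dydt}{t}$, valid only for Hilbert $X$, by the $\gamma$-Fubini theorem and the type/cotype comparison inequalities; and second, one must establish the uniform $\gamma$/$R$-boundedness of the Hankel multipliers $\{h_\lambda(\phi_{(t)})\}_{t>0}$ in the Bessel setting, where the oscillation of $J_{\lambda-1/2}$ and the off-diagonal behaviour of the Bessel translation have to be controlled directly, rather than relying on the clean symbol calculus available for the Euclidean Fourier transform.
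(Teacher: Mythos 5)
Your blueprint is a genuinely different route from the paper's, but its central step has a real gap. You propose to prove the estimate from scratch as a Bessel Littlewood--Paley--Stein theorem: an $L^2$-type $\gamma$-estimate via $R$-bounded Hankel multipliers, then extrapolation by operator-valued Calder\'on--Zygmund theory. The problem is the second step, which you yourself flag. The $\gamma$-Fubini isomorphism identifies $L^p\bigl(dx;\gamma(L^2(\frac{dydt}{t^2});X)\bigr)$ with $\gamma\bigl(L^2(\frac{dydt}{t^2});L^p(dx;X)\bigr)$ for \emph{every} Banach space, but it does not reduce $\int_0^\infty A^+(F)(x)^2\,dx$ to a single $\gamma$-norm of $F$ over a half-space: the cone $\G_+(x)$ moves with $x$, and the comparison between $L^2(dx;\gamma)$ and $\gamma(L^2(dx)\otimes L^2;X)$ that you invoke requires type~$2$ or cotype~$2$, not merely finite cotype, and is only one-sided in each case. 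A general UMD space (e.g.\ $L^p$, $p\neq 2$) has neither, so there is no privileged exponent at which your square function collapses to something computable by a Hankel--Plancherel argument; the ``uniform $\gamma$-boundedness of $h_\lambda(\phi_{(t)})$'' does not by itself produce the conical $\gamma$-estimate. The correct off-the-shelf input for conical square functions is \cite[Theorem 4.8]{HNP} (or \cite[Theorem 4.2]{HW}), whose hypotheses are kernel conditions on the Euclidean convolution family, not multiplier bounds, and verifying an analogue for the Bessel translation would amount to redoing that theory.

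The paper avoids all of this by a comparison argument. It writes $\Phi(w^2)=w^{-\lambda}\phi(w)$ with $\Phi\in S(\mathbb{R})$, splits the Bessel translation ${}_\lambda\tau_y(\phi_{(t)})(z)$ into a main piece $\mathcal{K}_{1,3}(z;y,t)=\Psi_t(y-z)$, a genuine Euclidean convolution with a Schwartz function $\Psi$ whose integral vanishes precisely because $\int_0^\infty x^\lambda\phi(x)\,dx=0$, plus error kernels $\mathcal{K}_2$, $\mathcal{K}_1-\mathcal{K}_{1,1}$, $\mathcal{K}_{1,1}-\mathcal{K}_{1,2}$, $\mathcal{K}_{1,4}$. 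Each error kernel is \emph{scalar}, so its contribution to the $\gamma$-norm is just $\|f(z)\|_X$ times an $L^2(\G_+(x),\frac{dydt}{t^2})$ norm, and these are dominated pointwise by Hardy-type operators $H_0$, $H_\infty$ and a local logarithmic operator $\mathcal{N}$, all bounded on $L^p(0,\infty)$ with no hypothesis on $X$. The UMD assumption enters only once, to handle the Euclidean main term $\tilde f*\Psi_t$ by citing \cite[Theorem 4.2]{HW}. If you want to salvage your approach, the honest path is to replace your second and third steps by exactly this reduction; otherwise you must supply a Bessel analogue of the Hyt\"onen--Weis/HNP conical estimate, which is a substantially harder task than the lemma itself.
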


    \begin{proof}
        Let $f \in L^p((0,\infty),X)$. To simplify the notation we call $\psi(w)=\phi(w)w^{-\lambda}$, $w \in (0,\infty)$.
        According to \cite[p. 85]{EG}, there exists $\Phi$ in the Schwartz class $S(\mathbb{R})$ such that $\Phi(w^2)=\psi(w)$, $w \in (0,\infty)$.
        We also introduce the function,
        \begin{equation*}\label{Psi}
            \Psi(w)=\frac{1}{\sqrt{\pi}2^{\lambda+1/2}\G(\lambda)} \int_0^\infty u^{\lambda-1} \Phi(w^2+u)du, \quad w \in \mathbb{R}.
        \end{equation*}
        We can write
        \begin{align*}
            \,_\lambda\tau_y (\phi_{(t)})(z)
            = & \frac{(yz)^\lambda}{\sqrt{\pi} 2^{\lambda-1/2}\G(\lambda)t^{2\lambda+1}}
            \left(\int_0^{\pi/2}  +  \int_{\pi/2}^\pi \right) (\sin \theta)^{2\lambda-1} \Phi \left( \frac{(y-z)^2+2yz(1-\cos \theta)}{t^2} \right) d\theta\\
            = & \mathcal{K}_1(z;y,t) + \mathcal{K}_2(z;y,t), \quad t,y,z \in (0,\infty).
        \end{align*}

        Taking into account that $\Phi \in S(\mathbb{R})$, we obtain
        \begin{align}\label{K2}
            \|\mathcal{K}_2&(z;y,t)\|_{L^2(\G_+(x),\frac{dydt}{t^2})}
                \leq   C z^\lambda \left\{ \int_{\G_+(x)} \frac{y^{2\lambda}}{t^{4\lambda+4}} \left( \frac{t^2}{t^2 + y^2+z^2} \right)^{2\lambda+2} dydt \right\}^{1/2} \nonumber \\
                \leq &  C z^\lambda \left\{ \int_0^\infty \int_{|x-y|}^\infty  \frac{ dt dy}{(t+y+z)^{2\lambda+4}} \right\}^{1/2}
                = C z^\lambda \left\{ \int_0^\infty  \frac{dy}{(|x-y|+y+z)^{2\lambda+3}}  \right\}^{1/2} \nonumber \\
                = & C z^\lambda \left\{ \int_0^x  \frac{dy}{(x+z)^{2\lambda+3}} + \int_x^\infty  \frac{dy}{(2y-x+z)^{2\lambda+3}}  \right\}^{1/2}
                \leq C \frac{z^\lambda}{(x+z)^{\lambda+1}} \leq  \frac{C}{x+z}, \ x,z \in (0,\infty).
        \end{align}
        On the other hand, by proceeding in a similar way we have that
        $$\|\mathcal{K}_1(z;y,t)\|_{L^2(\G_+(x),\frac{dydt}{t^2})}
            \leq C z^\lambda \left\{ \int_0^\infty  \frac{y^{2\lambda} }{(|x-y|+|y-z|)^{4\lambda+3}}  dy\right\}^{1/2} , \ x,z \in (0,\infty).$$
        In order to analyze this integral we consider two situations. Firstly, assume that $0 < z \leq x/2$. Then,
        \begin{align*}
            \int_0^\infty  \frac{y^{2\lambda}}{(|x-y|+|y-z|)^{4\lambda+3}} dy
                \leq & C \Big( z^{2\lambda} \int_0^z  \frac{dy}{(x+z-2y)^{4\lambda+3}}
                  + x^{2\lambda} \int_z^x  \frac{dy}{(x-z)^{4\lambda+3}}
                  + \int_{x}^\infty \frac{dy}{(2y-x-z)^{2\lambda+3}}  \Big)\\
                \leq & C \left( \frac{x^{2\lambda}}{(x-z)^{4\lambda+2}} + \frac{1}{(x-z)^{2\lambda+2}} \right)
                \leq \frac{C}{(x-z)^{2\lambda+2}}.
        \end{align*}
        By symmetry reasons, we also have that
        $$\int_0^\infty  \frac{y^{2\lambda}}{(|x-y|+|y-z|)^{4\lambda+3}} dy
            \leq \frac{C}{(z-x)^{2\lambda+2}}, \quad 0<2x \leq z<\infty.$$
        Thus,
        \begin{equation}\label{K1_global}
            \|\mathcal{K}_1(z;y,t)\|_{L^2(\G_+(x),\frac{dydt}{t^2})}
                \leq C \frac{z^\lambda}{|x-z|^{\lambda+1}}
                \leq C \left\{ \begin{array}{rl}
                        \dfrac{1}{x}, & 0 < z \leq x/2,\\
                        & \\
                        \dfrac{1}{z}, & 0 < 2x \leq z.\\
                     \end{array} \right.
        \end{equation}

        We now introduce the new kernels
        $$\mathcal{K}_{1,1}(z;y,t)
            = \frac{(yz)^\lambda}{\sqrt{\pi} 2^{\lambda-1/2}\G(\lambda)t^{2\lambda+1}}
                \int_0^{\pi/2}  \theta^{2\lambda-1} \Phi \left( \frac{(y-z)^2+2yz(1-\cos \theta)}{t^2} \right) d\theta,$$
        and
        $$\mathcal{K}_{1,2}(z;y,t)
            = \frac{(yz)^\lambda}{\sqrt{\pi} 2^{\lambda-1/2}\G(\lambda)t^{2\lambda+1}}
                \int_0^{\pi/2}  \theta^{2\lambda-1} \Phi \left( \frac{(y-z)^2+yz\theta^2}{t^2} \right) d\theta.$$
        By using the mean value theorem, the decay of $\Phi$ and that $2(1-\cos \theta) \sim \theta^2$, when $\theta \in (0,\pi/2)$, we can write,
        \begin{align}\label{K1-K11}
            \|\mathcal{K}_{1}&(z;y,t) - \mathcal{K}_{1,1}(z;y,t)\|_{L^2(\G_+(x),\frac{dydt}{t^2})} \nonumber \\
                \leq &  C  \left\{ \int_{\G_+(x)}
                \left( \frac{(yz)^\lambda}{t^{2\lambda+2}} \int_0^{\pi/2} |(\sin \theta)^{2\lambda-1}-\theta^{2\lambda-1}|
                \left| \Phi \left( \frac{(y-z)^2+2yz(1-\cos \theta)}{t^2} \right) \right| d\theta\right)^2 dydt\right\}^{1/2} \nonumber \\
                \leq &  C  \left\{ \int_{\G_+(x)}
                \left( \frac{(yz)^\lambda}{t^{2\lambda+2}} \int_0^{\pi/2} \theta^{2\lambda+1}
                \left( \frac{t^2}{t^2+(y-z)^2+2yz(1-\cos \theta)} \right)^{\lambda+1}  d\theta\right)^2 dydt\right\}^{1/2} \nonumber \\
                \leq &  C \left\{ \int_{\G_+(x)}
                \left( (yz)^\lambda \int_0^{\pi/2}
                \frac{\theta^{2\lambda+1}}{(t^2+(y-z)^2+yz\theta^2)^{\lambda+1}}   d\theta\right)^2 dydt\right\}^{1/2} \nonumber \\
                \leq & \frac{C}{z} \left( 1 + \log_+ \frac{z}{|x-z|} \right), \quad 0<x/2<z<2x.
        \end{align}
        In the last inequality we have used the estimations shown in \cite[p. 483--484]{BCFR2}. Analogously, we get
        \begin{align}\label{K11-K12}
            \|\mathcal{K}_{1,1}&(z;y,t) - \mathcal{K}_{1,2}(z;y,t)\|_{L^2(\G_+(x),\frac{dydt}{t^2})} \nonumber \\
                \leq &  C  \left\{ \int_{\G_+(x)}
                \left( \frac{(yz)^\lambda}{t^{2\lambda+2}} \int_0^{\pi/2} \theta^{2\lambda-1}
                \left| \Phi \left( \frac{(y-z)^2+2yz(1-\cos \theta)}{t^2} \right) - \Phi \left( \frac{(y-z)^2+yz\theta^2}{t^2} \right) \right|
                d\theta\right)^2 dydt\right\}^{1/2} \nonumber \\
                \leq &  C  \left\{ \int_{\G_+(x)}
                \left( \frac{(yz)^\lambda}{t^{2\lambda+2}} \int_0^{\pi/2} \theta^{2\lambda-1} \frac{yz|1-\cos\theta-\theta^2/2|}{t^2}
                \left( \frac{t^2}{t^2+(y-z)^2+yz\theta^2} \right)^{\lambda+2}  d\theta\right)^2 dydt\right\}^{1/2} \nonumber \\
                \leq &  C \left\{ \int_{\G_+(x)}
                \left( (yz)^\lambda \int_0^{\pi/2}
                \frac{\theta^{2\lambda+1}}{(t^2+(y-z)^2+yz\theta^2)^{\lambda+1}}   d\theta\right)^2 dydt\right\}^{1/2} \nonumber \\
                \leq & \frac{C}{z} \left( 1 + \log_+ \frac{z}{|x-z|} \right), \quad 0<x/2<z<2x.
        \end{align}
        We now split the kernel $\mathcal{K}_{1,2}$ as follows,
        \begin{align*}
            \mathcal{K}_{1,2}(z;y,t)
                = &  \frac{(yz)^\lambda}{\sqrt{\pi} 2^{\lambda-1/2}\G(\lambda)t^{2\lambda+1}}
                \left(\int_0^\infty - \int_{\pi/2}^\infty \right)  \theta^{2\lambda-1} \Phi \left( \frac{(y-z)^2+yz\theta^2}{t^2} \right) d\theta \\
                = & \mathcal{K}_{1,3}(z;y,t) - \mathcal{K}_{1,4}(z;y,t), \quad t,y,z \in (0,\infty).
        \end{align*}
        By making the change of variables $u=yz\theta^2/t^2$ we arrive at
        \begin{align*}
            \mathcal{K}_{1,3}(z;y,t)
                = & \frac{1}{\sqrt{\pi} 2^{\lambda+1/2}\G(\lambda)}
                \int_0^\infty  \frac{u^{\lambda-1}}{t}\Phi \left( \left(\frac{y-z}{t}\right)^2 + u \right) du
                = \Psi_t(y-z), \quad t,y,z \in (0,\infty).
        \end{align*}
        By using again that $\Phi \in S(\mathbb{R})$, the bound obtained in  \cite[p. 486--487]{BCFR2}
        allows us to write
        \begin{align}\label{K14}
            \|\mathcal{K}_{1,4}&(z;y,t)\|_{L^2(\G_+(x),\frac{dydt}{t^2})} \nonumber \\
            \leq &  C  \left\{ \int_{\G_+(x)}
                \left( \frac{(yz)^\lambda}{t^{2\lambda+2}} \int_{\pi/2}^\infty \theta^{2\lambda-1}
                 \left( \frac{t^2}{t^2+(y-z)^2+yz \theta^2} \right)^{\lambda+1}
                d\theta\right)^2 dydt\right\}^{1/2} \nonumber \\
            \leq &  C  \left\{ \int_{\G_+(x)}
                \left( (yz)^\lambda \int_{\pi/2}^\infty
                  \frac{\theta^{2\lambda-1}}{(t^2+(y-z)^2+yz \theta^2)^{\lambda+1}}
                d\theta\right)^2 dydt\right\}^{1/2}
            \leq \frac{C}{z}, \quad x,z, \in (0,\infty).
        \end{align}
        By putting together estimations \eqref{K2}--\eqref{K14}, we deduce that
        \begin{align}\label{A}
            A^+&(f \#_\lambda \phi_{(t)})(x)
                =  \left\|  \int_0^\infty f(z) \,_\lambda \tau_y(\phi_{(t)})(z) \chi_{\G_+(x)}(y,t) dz  \right\|_{\gamma\left(L^2((0,\infty)^2,\frac{dydt}{t^2});X\right)} \nonumber\\
                \leq & \int_0^\infty \left\|   f(z) \mathcal{K}_2(z;y,t) \chi_{\G_+(x)}(y,t)  \right\|_{\gamma\left(L^2((0,\infty)^2,\frac{dydt}{t^2});X\right)} dz \nonumber\\
                & + \int_{(0,x/2) \cup (2x,\infty)} \left\|   f(z) \mathcal{K}_1(z;y,t) \chi_{\G_+(x)}(y,t)  \right\|_{\gamma\left(L^2((0,\infty)^2,\frac{dydt}{t^2});X\right)} dz \nonumber \\
                & + \int_{x/2}^{2x}  \left\|   f(z) \left[\mathcal{K}_1(z;y,t) - \mathcal{K}_{1,1}(z;y,t) \right] \chi_{\G_+(x)}(y,t)  \right\|_{\gamma\left(L^2((0,\infty)^2,\frac{dydt}{t^2});X\right)} dz  \nonumber \\
                & + \int_{x/2}^{2x}  \left\|   f(z) \left[\mathcal{K}_{1,1}(z;y,t) - \mathcal{K}_{1,2}(z;y,t) \right] \chi_{\G_+(x)}(y,t)  \right\|_{\gamma\left(L^2((0,\infty)^2,\frac{dydt}{t^2});X\right)} dz \nonumber \\
                & + \int_{x/2}^{2x}  \left\|   f(z) \mathcal{K}_{1,4}(z;y,t) \chi_{\G_+(x)}(y,t)  \right\|_{\gamma\left(L^2((0,\infty)^2,\frac{dydt}{t^2});X\right)} dz \nonumber \\
                & + \left\|  \int_{x/2}^{2x} f(z) \mathcal{K}_{1,3}(z;y,t) \chi_{\G_+(x)}(y,t) dz  \right\|_{\gamma\left(L^2((0,\infty)^2,\frac{dydt}{t^2});X\right)} \nonumber \\
                \leq & \int_0^\infty    \|f(z)\|_X \left\| \mathcal{K}_2(z;y,t) \right\|_{L^2(\G_+(x),\frac{dydt}{t^2})} dz
                 + \int_{(0,x/2) \cup (2x,\infty)}  \|f(z)\|_X \left\|   \mathcal{K}_1(z;y,t)  \right\|_{L^2(\G_+(x),\frac{dydt}{t^2})} dz \nonumber \\
                & + \int_{x/2}^{2x}  \|f(z)\|_X \left\| \mathcal{K}_1(z;y,t) - \mathcal{K}_{1,1}(z;y,t) \right\|_{L^2(\G_+(x),\frac{dydt}{t^2})} dz \nonumber \\
                & + \int_{x/2}^{2x}  \|f(z)\|_X \left\| \mathcal{K}_{1,1}(z;y,t) - \mathcal{K}_{1,2}(z;y,t)   \right\|_{L^2(\G_+(x),\frac{dydt}{t^2})} dz \nonumber \\
                & + \int_{x/2}^{2x}  \|f(z)\|_X \left\| \mathcal{K}_{1,4}(z;y,t) \right\|_{L^2(\G_+(x),\frac{dydt}{t^2})} dz \nonumber \\
                & + \left\|  \int_{x/2}^{2x} f(z) \mathcal{K}_{1,3}(z;y,t) \chi_{\G_+(x)}(y,t) dz  \right\|_{\gamma\left(L^2((0,\infty)^2,\frac{dydt}{t^2});X\right)} \nonumber \\
                \leq & C \Big( H_0(\|f\|_X)(x) + H_\infty(\|f\|_X)(x) + \mathcal{N}(\|f\|_X)(x) \nonumber \\
                & + \left\|  \int_{x/2}^{2x} f(z) \mathcal{K}_{1,3}(z;y,t) \chi_{\G_+(x)}(y,t) dz  \right\|_{\gamma\left(L^2((0,\infty)^2,\frac{dydt}{t^2});X\right)} \Big),
                \quad x \in (0,\infty),
        \end{align}
        being
        $$H_0(g)(x) = \frac{1}{x} \int_0^x g(z) dz, \quad x \in (0,\infty),$$
        $$H_\infty(g)(x) =  \int_x^\infty \frac{g(z)}{z} dz, \quad x \in (0,\infty),$$
        and
        $$\mathcal{N}(g)(x)=\int_{x/2}^{2x} \frac{1}{z} \left( 1 + \log_+ \frac{z}{|x-z|}  \right) g(z) dz, \quad x \in (0,\infty).$$
        It is known that the Hardy type operators $H_0$ and $H_\infty$ are bounded from $L^p(0,\infty)$ into itself (\cite{Mu}). $\mathcal{N}$
        also maps $L^p(0,\infty)$ into $L^p(0,\infty)$, even for $p=1$. This can be easily checked by taking into account that
        $$0<\int_{1/2}^{2} \frac{1}{u} \left( 1 + \log_+ \frac{u}{|1-u|}  \right)du <\infty,$$
        and by applying Jensen's inequality.\\

        We now analyze the last quantity in \eqref{A}. We define $\tilde{f}(x)=f(x)$ when $x \geq 0$ and $\tilde{f}(x)=0$, otherwise.
        It is clear that,
        \begin{align*}
            \Big\|  \int_{x/2}^{2x} f(z) & \mathcal{K}_{1,3}(z;y,t) \chi_{\G_+(x)}(y,t) dz  \Big\|_{\gamma\left(L^2((0,\infty)^2,\frac{dydt}{t^2});X\right)}
                \leq \left\|  (\tilde{f} * \Psi_t)(y) \chi_{\G_+(x)}(y,t) \right\|_{\gamma\left(L^2((0,\infty)^2,\frac{dydt}{t^2});X\right)} \\
                & + \int_{(0,x/2) \cup (2x,\infty)} \|f(z)\|_X \left\| \Psi_t(y-z) \right\|_{L^2(\G_+(x),\frac{dydt}{t^2})} dz, \quad x \in (0,\infty),
        \end{align*}
        and this second integral is controlled by Hardy type operators. Indeed, observe that
        \begin{align}\label{second}
           \left\| \Psi_t(y-z) \right\|_{L^2(\G_+(x),\frac{dydt}{t^2})}
            \leq & C \left\{ \int_0^\infty \int_{|x-y|}^\infty \frac{dydt}{(t+|y-z|)^4} \right\}^{1/2} \nonumber \\
            \leq & C  \left(\int_{I_{x,z}} \frac{dy}{(|y-z|+|x-y|)^{3}} + \int_{\mathbb{R} \setminus I_{x,z}} \frac{dy}{(|y-z|+|x-y|)^{3}} \right)^{1/2} \nonumber\\
            \leq & \frac{C}{|x-z|}
            \leq C \left\{ \begin{array}{rl}
                        \dfrac{1}{x}, & 0 < z \leq x/2,\\
                        & \\
                        \dfrac{1}{z}, & 0 < 2x \leq z,\\
                     \end{array} \right.
        \end{align}
        because $\Psi \in S(\mathbb{R})$. Here $I_{x,z}$ represents the interval $(\min\{x,z\},\max\{x,z\})$.
        Hence, it only remains to prove that
        \begin{equation*}
            \left\| \left\|  (\tilde{f} * \Psi_t)(y) \chi_{\G_+(x)}(y,t) \right\|_{\gamma\left(L^2((0,\infty)^2,\frac{dydt}{t^2});X\right)} \right\|_{L^p(0,\infty)}
                \leq C \|f\|_{L^p((0,\infty),X)}.
        \end{equation*}
        Let $\{\gamma_j\}_{j \in \mathbb{N}}$ be a sequence of independent complex standard Gaussian random variables in the probability space
        $(\Omega, \mathcal{A},\mathbb{P})$. Suppose that $\{h_j\}_{j=1}^N$ is a finite family of orthonormal functions in $L^2((0,\infty)^2,\frac{dydt}{t^2})$.
        By defining
        $$\tilde{h}_j(y,t)
            = \left\{ \begin{array}{rl}
                        h_j(y,t), & y >0, \ t>0,\\
                        & \\
                        0, & y \leq 0, \ t>0,\\
                     \end{array} \right. $$
        $\{\tilde{h}_j\}_{j=1}^N$ is also orthonormal in $L^2(\mathbb{R}^2_+,\frac{dydt}{t^2})$, where $\mathbb{R}^2_+$ denotes the half-space
        $\mathbb{R} \times (0,\infty)$. Then,
        \begin{align} \label{gamma}
            & \left\{ \int_\Omega \left\| \sum_{j=1}^N \gamma_j(w)
            \int_{(0,\infty)^2} (\tilde{f} * \Psi_t)(y) \chi_{\G_+(x)}(y,t) h_j(y,t) \frac{dydt}{t^2} \right\|_X^2 d\mathbb{P}(w) \right\}^{1/2} \nonumber\\
            & \qquad =  \left\{ \int_\Omega \left\| \sum_{j=1}^N \gamma_j(w)
            \int_{\mathbb{R}^2_+} (\tilde{f} * \Psi_t)(y) \chi_{\G(x)}(y,t) \tilde{h}_j(y,t) \frac{dydt}{t^2} \right\|_X^2 d\mathbb{P}(w) \right\}^{1/2} \nonumber \\
            &\qquad \leq  \left\|  (\tilde{f} * \Psi_t)(y) \chi_{\G(x)}(y,t) \right\|_{\gamma\left(L^2(\mathbb{R}^2_+,\frac{dydt}{t^2});X\right)}, \quad x \in (0,\infty).
        \end{align}
        Here $\G(x)=\{(y,t) \in \mathbb{R}^2_+ : |x-y|<t\}$, $x \in \mathbb{R}$.\\

        Finally, applying \cite[Theorem 4.2]{HW} we conclude that
        \begin{align*}
            \left\| \left\|  (\tilde{f} * \Psi_t)(y) \chi_{\G_+(x)}(y,t) \right\|_{\gamma\left(L^2((0,\infty)^2,\frac{dydt}{t^2});X\right)} \right\|_{L^p(0,\infty)}
               \leq &  \left\| \left\|  (\tilde{f} * \Psi_t)(y) \chi_{\G(x)}(y,t) \right\|_{\gamma\left(L^2(\mathbb{R}^2_+,\frac{dydt}{t^2});X\right)} \right\|_{L^p(\mathbb{R})} \\
               \leq & C \|\tilde{f}\|_{L^p(\mathbb{R},X)}
               = C \|f\|_{L^p((0,\infty),X)}.
        \end{align*}
        Note that, \cite[Theorem 4.2]{HW} requires that $\Psi$ has vanishing integral, and this is a consequence of
        the hypothesis imposed over $\phi$, as we now show,
        \begin{align}\label{nula}
            \int_\mathbb{R} \Psi(w) dw
                = & \frac{1}{\sqrt{\pi} 2^{\lambda-1/2} \G(\lambda)} \int_0^\infty \int_0^\infty u^{\lambda-1} \frac{\phi(\sqrt{w^2+u})}{\left( \sqrt{w^2+u} \right)^\lambda} du dw \nonumber \\
                = & \frac{1}{\sqrt{\pi} 2^{\lambda-3/2} \G(\lambda)} \int_0^\infty \int_w^\infty (z^2-w^2)^{\lambda-1} z^{1-\lambda} \phi(z) dz dw \nonumber \\
                = & \frac{1}{\sqrt{\pi} 2^{\lambda-3/2} \G(\lambda)} \int_0^\infty z^{1-\lambda} \phi(z)\int_0^z (z^2-w^2)^{\lambda-1}    dw dz \nonumber \\
                = & \frac{1}{ 2^{\lambda-1/2} \G(\lambda+1/2)} \int_0^\infty z^{\lambda}  \phi(z) dz =0.
        \end{align}
    \end{proof}

    We now introduce the operator $\mathcal{Y}_\lambda$ defined by
    $$\mathcal{Y}_\lambda(f)(x,t)
        =  \int_0^\infty f(y) t D_{\lambda,y} P_t^\lambda(x,y) dy, \quad t,x \in (0,\infty),$$
    where $D_{\lambda,y}=y^\lambda \frac{d}{dy} y^{-\lambda}$. Note that $\mathcal{Y}_\lambda$ is not a Hankel
    convolution operator. Hence, the next result is not a special case of Lemma~\ref{Lem_Lp}.

    \begin{Lem}\label{Lem_Lp_dx}
        Let $X$ be a UMD Banach space, $\lambda>0$ and $1<p<\infty$. Then, there exists $C>0$ such that
        $$\left\|A^+\left( \mathcal{Y}_\lambda(f)  \right) \right\|_{L^p(0,\infty)}
            \leq C \|f\|_{L^p((0,\infty),X)}, \quad f \in L^p((0,\infty),X).$$
    \end{Lem}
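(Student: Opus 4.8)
The plan is to mimic the proof of Lemma~\ref{Lem_Lp}, keeping the same roles for the variables (vertex $x$, cone variable $y$, integration variable $z$) and replacing the Hankel translation kernel by $Q_t(y,z):=tD_{\lambda,z}P_t^\lambda(y,z)$, so that $\mathcal Y_\lambda(f)(y,t)=\int_0^\infty f(z)Q_t(y,z)\,dz$ by the symmetry of the Poisson kernel. First I would compute $Q_t$ explicitly: since $D_{\lambda,z}=z^\lambda\frac{d}{dz}z^{-\lambda}$ and $\partial_z[(y-z)^2+t^2+2yz(1-\cos\theta)]=2(z-y\cos\theta)$, a differentiation under the integral sign gives
$$Q_t(y,z)=-\frac{4\lambda(\lambda+1)(yz)^\lambda t^2}{\pi}\int_0^\pi\frac{(\sin\theta)^{2\lambda-1}(z-y\cos\theta)}{[(y-z)^2+t^2+2yz(1-\cos\theta)]^{\lambda+2}}\,d\theta,\qquad t,y,z\in(0,\infty).$$
I would then write $z-y\cos\theta=(z-y)+y(1-\cos\theta)$ and repeat the decomposition of \eqref{K2}--\eqref{K14}: split $\int_0^\pi=\int_0^{\pi/2}+\int_{\pi/2}^\pi$ and successively replace $(\sin\theta)^{2\lambda-1}$ by $\theta^{2\lambda-1}$, $2(1-\cos\theta)$ by $\theta^2$, and extend $\int_0^{\pi/2}$ to $\int_0^\infty$.

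Computing the $L^2(\G_+(x),\frac{dydt}{t^2})$ norms of the resulting kernels exactly as in Lemma~\ref{Lem_Lp}, the piece coming from $[\pi/2,\pi]$ is bounded by $C/(x+z)$; for $z\in(0,x/2)\cup(2x,\infty)$ the remaining pieces are bounded by $Cz^\lambda/|x-z|^{\lambda+1}$; and for $z\in(x/2,2x)$ each successive approximation error is bounded by $\frac{C}{z}(1+\log_+\frac{z}{|x-z|})$. All of these are controlled in $L^p(0,\infty)$ by the Hardy operators $H_0,H_\infty$ and by the operator $\mathcal N$. After the change of variables $u=yz\theta^2/t^2$, the part of the principal kernel produced by the summand $(z-y)$ reduces to a genuine Euclidean dilation $\Theta_t(y-z)$ with
$$\Theta(w)=\frac{2w}{\pi(1+w^2)^2},\qquad w\in\mathbb{R},$$
which is, up to a constant, $-\partial_w$ of the one-dimensional Poisson kernel. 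Replacing the truncated integral over $(x/2,2x)$ by the full convolution $\tilde f*\Theta_t$ (the tail again being absorbed by Hardy operators, as in \eqref{second}), enlarging the cone $\G_+(x)$ to $\G(x)$ in $\mathbb{R}^2_+$ as in \eqref{gamma}, and applying \cite[Theorem 4.2]{HW} finishes this part. The cancellation $\int_\mathbb{R}\Theta=0$ required by that theorem holds here \emph{automatically}, because $\Theta$ is odd; this is what replaces the moment computation \eqref{nula}, no vanishing-integral hypothesis on a test function being available in the present statement.

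The delicate point, and the reason this lemma is not a formal copy of Lemma~\ref{Lem_Lp}, is the even summand $y(1-\cos\theta)$. After the same reductions its ``principal'' contribution is $-\frac{\lambda t}{z}\,p_t(y-z)$, where $p_t$ is the Euclidean Poisson kernel; this is an even function of $y-z$, it is \emph{not} a dilation $\frac1t\Theta(\frac{y-z}{t})$ because of the extra factor $t/z$, and its integral over $\mathbb{R}$ does not vanish, so it cannot be handed to \cite[Theorem 4.2]{HW}. The resolution is that this term should not be extracted as a principal part at all. Keeping the true Bessel denominator, its off-diagonal decay confines the logarithmic growth to the region where $|y-z|\lesssim x$ and $t\lesssim x$, in which the cone constraint $|x-y|<t$ keeps $y$ bounded and removes the spurious large-$y$ divergence of the Euclidean model; one then checks that, for $z\in(x/2,2x)$,
$$\Big\|\,\tfrac{4\lambda(\lambda+1)(yz)^\lambda t^2}{\pi}\int_0^\pi\tfrac{(\sin\theta)^{2\lambda-1}y(1-\cos\theta)}{[(y-z)^2+t^2+2yz(1-\cos\theta)]^{\lambda+2}}\,d\theta\,\Big\|_{L^2(\G_+(x),\frac{dydt}{t^2})}\le\frac{C}{z}\Big(1+\log_+\tfrac{z}{|x-z|}\Big),$$
so that this contribution is absorbed by $\mathcal N$ rather than by the Euclidean square function. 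I expect this estimate --- isolating the genuine near-diagonal logarithm from the artificial divergence of the Euclidean approximation by exploiting the true kernel decay --- to be the main obstacle; the underlying scalar bounds are exactly those established on pp.~483--487 of \cite{BCFR2} and recorded in \cite[Proposition 4.4]{BCFR2}.
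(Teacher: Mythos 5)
Your decomposition is the one the paper itself uses: the same explicit angular formula for $tD_{\lambda,z}P_t^\lambda(y,z)$ with the split $z-y\cos\theta=(z-y)+y(1-\cos\theta)$, the same chain of approximations of the angular integral (replacing $(\sin\theta)^{2\lambda-1}$ by $\theta^{2\lambda-1}$, $2(1-\cos\theta)$ by $\theta^2$, extending to $\int_0^\infty$), the same reduction of all error terms to the Hardy operators $H_0$, $H_\infty$ and a near-diagonal operator of $\mathcal{N}$ type, and the same identification of the odd principal part with $t\partial_z(P_t(y-z))$. Your treatment of the even summand $y(1-\cos\theta)$ also matches the paper in substance: it is not extracted as a Euclidean principal part but estimated directly in $L^2(\G_+(x),\frac{dydt}{t^2})$ over the diagonal $x/2<z<2x$; the paper's bound for the corresponding kernel $\mathcal{I}_\lambda$ is $\frac{C}{z}\bigl(1+\log_+\frac{z}{|x-z|}+(\frac{z}{|x-z|})^{1/2}\bigr)$ rather than your purely logarithmic one, but either suffices for the $L^p$-boundedness of the resulting operator $\mathcal{T}_\lambda$.

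The genuine gap is in your disposal of the principal part. You hand $\tilde f*\Theta_t$ to \cite[Theorem 4.2]{HW} after checking only the cancellation $\int_{\mathbb{R}}\Theta=0$. But $\Theta(w)=\frac{2w}{\pi(1+w^2)^2}$ decays only like $|w|^{-3}$, so $\Theta\notin S(\mathbb{R})$, and \cite[Theorem 4.2]{HW} --- which in Lemma~\ref{Lem_Lp} is applied to a profile $\Psi$ that is genuinely Schwartz --- requires a Schwartz function with vanishing integral; the paper records explicitly that for $h(z)=-\frac{2}{\pi}\frac{z}{(1+z^2)^2}$ this theorem ``cannot be applied directly''. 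The substitute is \cite[Theorem 4.8]{HNP}, a conical square-function estimate in UMD spaces for operators with Calder\'on--Zygmund kernels, whose hypotheses $t\partial_zP_t(y-z)$ does satisfy, used together with the scalar estimate \eqref{sq}. That theorem produces a bounded extension $\tilde{\mathcal{S}}$ of $\mathcal{S}\otimes I_X$, and one must then still verify that $\tilde{\mathcal{S}}(g)\chi_{\G(x)}=\mathcal{S}(g)\chi_{\G(x)}$ a.e., i.e.\ that the abstract extension is realized by the Bochner integral $\int_{\mathbb{R}}t\partial_zP_t(y-z)g(z)\,dz$; this identification (via the truncated cones $\G_N(x)$ and approximation by functions in $L^p(\mathbb{R})\otimes X$) occupies the second half of the paper's proof and is absent from your proposal. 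So the vanishing integral of $\Theta$, which you single out as the hypothesis to check, is not the issue; the issue is that $\Theta$ lies outside the class to which the cited theorem applies, and repairing this requires both a different theorem and an additional identification step.
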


    \begin{proof}
        Let $f \in L^p((0,\infty),X)$. We have that
        \begin{align*}
           t D_{\lambda,z} P_t^\lambda(y,z)
            = & - \frac{4\lambda (\lambda+1) }{\pi} t^2 (yz)^\lambda \left(\int_0^{\pi/2} + \int_{\pi/2}^\pi  \right) \frac{(\sin \theta)^{2\lambda-1}[(z-y)+y(1-\cos \theta)]}{[(y-z)^2+t^2+2yz(1-\cos \theta)]^{\lambda+2}} d\theta\\
            = & \mathcal{L}_1(z;y,t) + \mathcal{L}_2(z;y,t), \quad t,y,z \in (0,\infty).
        \end{align*}
        We introduce the following kernels, which will help us to obtain the desired estimations,
        $$\mathcal{L}_{1,1}(z;y,t)
            = - \frac{4\lambda (\lambda+1) }{\pi} t^2 (yz)^\lambda \int_0^{\pi/2} \frac{ \theta^{2\lambda-1}[(z-y)+y(1-\cos \theta)]}{[(y-z)^2+t^2+2yz(1-\cos \theta)]^{\lambda+2}} d\theta , $$
        \begin{align*}
            \mathcal{L}_{1,2}(z;y,t)
                = & - \frac{4\lambda (\lambda+1) }{\pi} t^2 (yz)^\lambda \left( \int_0^\infty - \int_{\pi/2}^\infty \right) \frac{ \theta^{2\lambda-1}[(z-y)+y\theta^2/2]}{[(y-z)^2+t^2+yz\theta^2]^{\lambda+2}} d\theta \\
                = & \mathcal{L}_{1,3}(z;y,t) - \mathcal{L}_{1,4}(z;y,t),
        \end{align*}
        $$\mathcal{L}_{1,5}(z;y,t)
            = - \frac{4\lambda (\lambda+1) }{\pi} t^2 (yz)^\lambda \int_0^\infty \frac{ \theta^{2\lambda-1}(z-y)}{[(y-z)^2+t^2+yz\theta^2]^{\lambda+2}} d\theta , $$
        and $\mathcal{L}_{1,6}(z;y,t)=\mathcal{L}_{1,3}(z;y,t)-\mathcal{L}_{1,5}(z;y,t)$, for every $t,y,z \in (0,\infty)$.\\

        We can write
        \begin{align}\label{A+}
            A^+&\left( \int_0^\infty f(z) t D_{\lambda,z} P_t^\lambda(y,z) dz \right)(x)
                \leq  \int_0^\infty    \|f(z)\|_X \left\| \mathcal{L}_2(z;y,t) \right\|_{L^2(\G_+(x),\frac{dydt}{t^2})} dz \nonumber \\
                & + \int_{(0,x/2) \cup (2x,\infty)}  \|f(z)\|_X \left\|   \mathcal{L}_1(z;y,t)  \right\|_{L^2(\G_+(x),\frac{dydt}{t^2})} dz  \nonumber \\
                & + \int_{x/2}^{2x}  \|f(z)\|_X \left\| \mathcal{L}_1(z;y,t) - \mathcal{L}_{1,1}(z;y,t) \right\|_{L^2(\G_+(x),\frac{dydt}{t^2})} dz  \nonumber \\
                & + \int_{x/2}^{2x}  \|f(z)\|_X \left\| \mathcal{L}_{1,1}(z;y,t) - \mathcal{L}_{1,2}(z;y,t)   \right\|_{L^2(\G_+(x),\frac{dydt}{t^2})} dz  \nonumber \\
                & + \int_{x/2}^{2x}  \|f(z)\|_X \left\| \mathcal{L}_{1,4}(z;y,t) - \mathcal{L}_{1,6}(z;y,t) \right\|_{L^2(\G_+(x),\frac{dydt}{t^2})} dz \nonumber \nonumber \\
                & + \left\|  \int_{x/2}^{2x} f(z) \mathcal{L}_{1,5}(z;y,t) \chi_{\G_+(x)}(y,t) dz  \right\|_{\gamma\left(L^2((0,\infty)^2,\frac{dydt}{t^2});X\right)}
                , \quad x \in (0,\infty).
        \end{align}
        Our objective is to analyze the $L^p$-boundedness properties of all the operators appearing in each line
        in the right hand side of the last inequality.\\

        First of all, by \eqref{K2} we have that
        \begin{align*}
            \left\| \mathcal{L}_2(z;y,t) \right\|_{L^2(\G_+(x),\frac{dydt}{t^2})}
                & \leq C z^\lambda \left\{\int_{\G_+(x)}  \frac{ dt dy}{(t+y+z)^{2\lambda+4}} \right\}^{1/2}
                  \leq \frac{C}{x+z}, \quad x,z \in (0,\infty),
        \end{align*}
        and also by proceeding as in \eqref{K1_global},
        \begin{align*}
            \left\| \mathcal{L}_1(z;y,t) \right\|_{L^2(\G_+(x),\frac{dydt}{t^2})}
                \leq &  C \left\{ \int_{\G_+(x)} \left| t (yz)^\lambda \int_0^{\pi/2}\frac{(\sin \theta)^{2\lambda-1}[z(1-\cos \theta) -(y-z)\cos \theta)}{[(y-z)^2+t^2+2yz(1-\cos \theta)]^{\lambda+2}} d\theta \right|^2 dy dt \right\}^{1/2} \\
                \leq &  C \Big( \left\{ \int_{\G_+(x)} \frac{z^{2\lambda+2}y^{2\lambda}}{(|y-z|+t)^{4\lambda+6}}  dy dt \right\}^{1/2}
                     +  \left\{ \int_{\G_+(x)} \frac{z^{2\lambda}y^{2\lambda}}{(|y-z|+t)^{4\lambda+4}}  dy dt \right\}^{1/2} \Big)   \\
                \leq &  C \Big( \frac{z^{\lambda+1}}{|x-z|^{\lambda+2}} + \frac{z^{\lambda}}{|x-z|^{\lambda+1}} \Big)
                \leq C \left\{ \begin{array}{rl}
                        \dfrac{1}{x}, & 0 < z \leq x/2,\\
                        & \\
                        \dfrac{1}{z}, & 0 < 2x \leq z.\\
                     \end{array} \right.
        \end{align*}
        Then, the two first operators in \eqref{A+} are controlled by the Hardy type operators $H_0$ and $H_\infty$, which are bounded in $L^p(0,\infty)$.\\

        By applying the mean value theorem we obtain, for every $t,y,z \in (0,\infty)$,
        \begin{align*}
            \Big| \mathcal{L}_1(z;y,t) &- \mathcal{L}_{1,1}(z;y,t) \Big| +  \left| \mathcal{L}_{1,1}(z;y,t) - \mathcal{L}_{1,2}(z;y,t) \right| \\
                 &\leq  C \left( t^2 (yz)^\lambda \int_0^{\pi/2} \frac{ \theta^{2\lambda+3}y}{[(y-z)^2+t^2+yz\theta^2]^{\lambda+2}} d\theta
                         + t(yz)^\lambda \int_0^{\pi/2} \frac{ \theta^{2\lambda+1}}{[(y-z)^2+t^2+yz\theta^2]^{\lambda+1}} d\theta  \right).
        \end{align*}
        Moreover, we have that
        \begin{align} \label{GB}
            \left| \mathcal{L}_{1,6}(z;y,t) - \mathcal{L}_{1,4}(z;y,t) \right|
                \leq & C \Big( t^2 (yz)^\lambda \int_0^{\pi/2} \frac{ \theta^{2\lambda+1}y}{[(y-z)^2+t^2+yz\theta^2]^{\lambda+2}} d\theta \nonumber \\
                     &   + t(yz)^\lambda \int_{\pi/2}^\infty \frac{ \theta^{2\lambda-1}}{[(y-z)^2+t^2+yz\theta^2]^{\lambda+1}} d\theta \Big),
                \quad t,y,z \in (0,\infty).
        \end{align}
        By using \eqref{K1-K11}, \eqref{K11-K12} and \eqref{K14} it follows that the third, fourth and fifth operator
        appearing in \eqref{A+} are bounded from $L^p((0,\infty),X)$ into $L^p(0,\infty)$ provided that the operator
        $$\mathcal{T}_\lambda(g)(x)
            = \int_{x/2}^{2x}  g(z) \left\| \mathcal{I}_\lambda(z;y,t) \right\|_{L^2(\G_+(x),\frac{dydt}{t^2})} dz, \quad x \in (0,\infty),$$
        is bounded  from $L^p(0,\infty)$ into itself, where
        \begin{equation}\label{GC}
            \mathcal{I}_\lambda(z;y,t)
                = t^2 (yz)^\lambda \int_0^{\pi/2} \frac{ \theta^{2\lambda+1}y}{[(y-z)^2+t^2+yz\theta^2]^{\lambda+2}} d\theta,
                \quad t,y,z \in (0,\infty).
        \end{equation}

        In order to study the kernel $\mathcal{I}_\lambda$ we distingue three cases. Firstly, assume that
        $0<y \leq z/2$. We can write
        \begin{align*}
            |\mathcal{I}_\lambda(z;y,t)|
                \leq & C t (yz)^\lambda \int_0^{\pi/2} \frac{ \theta^{2\lambda+1}}{[(y-z)^2+t^2+yz\theta^2]^{\lambda+1}} \frac{ty}{z^2+t^2+yz\theta^2} d\theta \\
                \leq & C t (yz)^\lambda \int_0^{\pi/2} \frac{ \theta^{2\lambda+1}}{[(y-z)^2+t^2+yz\theta^2]^{\lambda+1}} d\theta,
                \quad t \in (0,\infty).
        \end{align*}
        In a similar way, if $0<2z \leq y$, we get
        $$|\mathcal{I}_\lambda(z;y,t)|
            \leq C t (yz)^\lambda \int_0^{\pi/2} \frac{ \theta^{2\lambda+1}}{[(y-z)^2+t^2+yz\theta^2]^{\lambda+1}} d\theta,
                \quad t \in (0,\infty).$$
        Moreover, if $z/2<y<2z$, then
        \begin{align*}
            |\mathcal{I}_\lambda(z;y,t)|
                \leq & C t \int_0^{\pi/2} \frac{ (z\theta)^{2\lambda+1}}{[(y-z)^2+t^2+(z\theta)^2]^{\lambda+3/2}} d\theta
                \leq  C t \int_0^\infty \frac{ d\theta}{(|y-z|+t+z\theta)^{2}} \\
                \leq & C \frac{ t}{z(|y-z|+t)},  \quad t \in (0,\infty).
        \end{align*}
        From \eqref{K1-K11} we deduce that, for each $0<x/2<z<2x<\infty$,
        \begin{align*}
            \left( \int_{\G_+(x)}  |\mathcal{I}_\lambda(z;y,t)|^2 \frac{dydt}{t^2} \right)^{1/2}
                \leq & \frac{C}{z} \left( 1 + \log_+ \frac{z}{|x-z|}
                    + \left( \int_{z/2}^{2z} \int_{|x-y|}^\infty \frac{dtdy}{(|y-z|+t)^2} \right)^{1/2} \right) \\
                \leq & \frac{C}{z} \left( 1 + \log_+ \frac{z}{|x-z|}
                    + \left( \int_{z/2}^{2z} \frac{dy}{|y-z|+|x-y|} \right)^{1/2} \right) \\
                \leq & \frac{C}{z} \left( 1 + \log_+ \frac{z}{|x-z|}
                    + \left(\frac{z}{|x-z|} \right)^{1/2} \right).
        \end{align*}
        Hence, the operator $\mathcal{T}_\lambda$ is bounded from $L^p(0,\infty)$ into itself.\\

        To finish the proof of this lemma we have to show that the operator $\mathcal{Z}_\lambda$ defined by
        $$\mathcal{Z}_\lambda(f)(x;y,t)
            = \int_{x/2}^{2x} f(z) \mathcal{L}_{1,5}(z;y,t) \chi_{\G_+(x)}(y,t) dz$$
        is bounded from $L^p((0,\infty),X)$ into $L^p\left((0,\infty);\gamma\left(L^2((0,\infty)^2,\frac{dydt}{t^2});X\right)\right)$.\\

         The change of variables $\theta=u\sqrt{((y-z)^2+t^2)/yz}$ gives us
        \begin{align}\label{G1}
            \mathcal{L}_{1,5}(z;y,t)
                = & \frac{2t^2}{\pi} \frac{y-z}{[(y-z)^2+t^2]^2}
                = t \partial_z (P_t(y-z)), \quad t,y,z \in (0,\infty),
        \end{align}
        because $\int_0^\infty u^{2\lambda-1}/(1+u^2)^{\lambda+2} du = 1/(2\lambda(\lambda+1))$.
        Here $P_t(z)$ represents the kernel of the
        Poisson semigroup associated  with the Euclidean Laplacian,
        $$P_t(z)= \frac{1}{\pi} \frac{t}{t^2+z^2}, \quad z \in \mathbb{R}, \ t \in (0,\infty).$$

        We define $\tilde{f}(z)=f(z)$, $z \geq 0$, and $\tilde{f}(z)=0$ when $z <0$.
        Then, \eqref{gamma} leads to
        \begin{align*}
            \Big\|  \mathcal{Z}_\lambda(f)(x; \cdot , \cdot)  \Big\|_{\gamma\left(L^2((0,\infty)^2,\frac{dydt}{t^2});X\right)}
                \leq & \left\|  (\tilde{f} * t \partial_z P_t )(y) \chi_{\G(x)}(y,t) \right\|_{\gamma\left(L^2(\mathbb{R}^2_+,\frac{dydt}{t^2});X\right)} \\
                & + \int_{(0,x/2) \cup (2x,\infty)} \|f(z)\|_X \left\| t \partial_z P_t(y-z) \right\|_{L^2(\G_+(x),\frac{dydt}{t^2})} dz, \ x \in (0,\infty).
        \end{align*}
        Also, by doing the same computation as in \eqref{second}, it follows that
        \begin{align*}
            \int_{(0,x/2) \cup (2x,\infty)} & \|f(z)\|_X \left\| t \partial_z P_t(y-z) \right\|_{L^2(\G_+(x),\frac{dydt}{t^2})} dz
                \leq C \Big( H_0(\|f\|_X)(x) + H_\infty(\|f\|_X)(x) \Big), \ x \in (0,\infty).
        \end{align*}
        Note that $t \partial_z P_t(z)=h_t(z)$, $z \in \mathbb{R}$ and $t>0$, where
        $h(z)=-\frac{2}{\pi} \frac{z}{(z^2+1)^2}$, $z \in \mathbb{R}$. It is clear that $\int_\mathbb{R} h(z) dz=0$
        and that $h \notin S(\mathbb{R})$, and then \cite[Theorem 4.2]{HW} cannot be apply directly.
        We define the operator $\mathcal{S}$ by
        $$\mathcal{S}(g)(y,t)
            = \int_{\mathbb{R}} t \partial_z P_t(y-z) g(z) dz, \quad g \in L^2(\mathbb{R}). $$
        It is well-known that
        \begin{equation}\label{sq}
            \left\| \left( \int_{\G(x)} |\mathcal{S}(g)(y,t)|^2 \frac{dy dt}{t^2} \right)^{1/2} \right\|_{L^q(\mathbb{R})}
            \leq C \|g\|_{L^q(\mathbb{R})}, \quad g \in L^q(\mathbb{R}), \ 1<q<\infty.
        \end{equation}
        (see \cite[p. 27--28, 180--182]{Ste}). Moreover, the kernel $t \partial_z P_t(y-z)$ satisfies
        the hypothesis in \cite[Theorem 4.8]{HNP}. Hence, there exists $C>0$ such that for every
        $g \in L^p(\mathbb{R}) \otimes X$,
        $$\left\| \left\|  (g * t \partial_z P_t )(y) \chi_{\G(x)}(y,t) \right\|_{\gamma\left(L^2(\mathbb{R}^2_+,\frac{dydt}{t^2});X\right)} \right\|_{L^p(\mathbb{R})}
            \leq C \|g\|_{L^p(\mathbb{R},X)}.$$

        We denote by $\tilde{\mathcal{S}}$ the extension of the operator $\mathcal{S}\otimes I_X$ to
        $L^p(\mathbb{R},X)$ as a bounded operator from $L^p(\mathbb{R},X)$ to $T^{p,2}\left(\mathbb{R},\gamma\left(L^2(\mathbb{R}^2_+,\frac{dydt}{t^2});X\right)\right)$,
        where the tent space $T^{p,2}\left(\mathbb{R},\gamma\left(L^2(\mathbb{R}^2_+,\frac{dydt}{t^2});X\right)\right)$ is the completion of
        $C_c(\mathbb{R}^2_+)\otimes X$ with respect to the norm
        $$ \|F\|_{T^{p,2}\left(\mathbb{R},\gamma\left(L^2(\mathbb{R}^2_+,\frac{dydt}{t^2});X\right)\right)}
            = \| F(y,t)\chi_{\G(x)}(y,t)\|_{L^p\left(\mathbb{R},\gamma\left(L^2(\mathbb{R}^2_+,\frac{dydt}{t^2});X\right)\right)}.$$

        Our next objective is to show that $\tilde{\mathcal{S}}(g)=\mathcal{S}(g)$,
        $g \in L^p(\mathbb{R},X)$, that is,
        $$\tilde{\mathcal{S}}(g)(y,t)
            = \int_{\mathbb{R}} t \partial_z P_t(y-z) g(z) dz, \quad g \in L^p(\mathbb{R},X), $$
        where the last integral is understood in the Bochner's sense.\\

        Let $g \in L^p(\mathbb{R},X)$. Observe that $F(y,t)=\mathcal{S}(g)(y,t) \chi_{\G(x)}(y,t)$ is weakly-$L^2(\mathbb{R}_+^2,\frac{dydt}{t^2};X)$
        for almost every $x \in \mathbb{R}$. Indeed, from \eqref{sq} we deduce that
        \begin{align*}
            \left\| \left( \int_{\G(x)} |\langle F(y,t),x'\rangle|^2 \frac{dy dt}{t^2} \right)^{1/2} \right\|_{L^p(\mathbb{R})}
                = & \left\| \left( \int_{\G(x)} |\mathcal{S}(\langle g,x'\rangle)(y,t)|^2 \frac{dy dt}{t^2} \right)^{1/2} \right\|_{L^p(\mathbb{R})}\\
                \leq & C \|\langle g,x'\rangle\|_{L^p(\mathbb{R})}
                \leq  C \|x'\|_{X'}\| g\|_{L^p(\mathbb{R},X)}<\infty, \quad x' \in X',
        \end{align*}
        and this implies that $\langle F(y,t),x'\rangle \in L^2(\mathbb{R}_+^2,\frac{dydt}{t^2})$, a.e. $x \in \mathbb{R}$.\\

        From \eqref{G1} it follows that
        $$| t \partial_z P_t(y-z)|
            \leq C \frac{t}{(|y-z|+t)^2}, \quad t \in (0,\infty), \ y,z \in \mathbb{R}.$$
        If $N \in \mathbb{N}$, we denote $\G_N(x)=\{(y,t) \in \mathbb{R}^2_+ :  |x-y|<t, 1/N<t<N\}$. We have that
        \begin{align*}
            \left\| \mathcal{S}(g)(y,t) \chi_{\G_N(x)}(y,t) \right\|^2_{L^2\left( \mathbb{R}_+^2, \frac{dydt}{t^2}; X\right)}
                \leq & C \int_{\G_N(x)} \left( \int_\mathbb{R} \frac{\|g(z)\|_X}{(|y-z|+t)^2}  dz\right)^2 dy dt \\
                \leq & C \|g\|^2_{L^p(\mathbb{R},X)} \int_{\G_N(x)} \left( \int_\mathbb{R} \frac{dz}{(|y-z|+t)^{2p'}}  \right)^{2/p'} dy dt \\
                \leq & C \|g\|^2_{L^p(\mathbb{R},X)}, \quad N \in \mathbb{N}, \ x \in \mathbb{R}.
        \end{align*}
        Here $C>0$ depends on $N \in \mathbb{N}$ but it does not depend on g.
        Assume that $g_n \in L^p(\mathbb{R}) \otimes X$, $n \in \mathbb{N}$, and that $g_n \longrightarrow g$,
        as $n \to \infty$, in $L^p(\mathbb{R},X)$. Then, for every $N \in \mathbb{N}$ and $x \in \mathbb{R}$,
        $$\mathcal{S}(g_n) \chi_{\G_N(x)} \longrightarrow \mathcal{S}(g) \chi_{\G_N(x)},
            \quad \text{as } n \to \infty, \text{ in } L^2\left( \mathbb{R}_+^2, \frac{dydt}{t^2}; X\right).$$
        Also,
        $$\mathcal{S}(g_n) \chi_{\G(x)} \longrightarrow \tilde{\mathcal{S}}(g) \chi_{\G(x)},
            \quad \text{as } n \to \infty, \text{ in } L^p\left(\mathbb{R}; \gamma\left( L^2\left( \mathbb{R}_+^2, \frac{dydt}{t^2}\right); X \right)\right).$$
        Hence, there exists a subset $\Omega$ of $\mathbb{R}$ such that $|\mathbb{R}\setminus \Omega|=0$,
        and an increasing sequence $\{m_k\}_{k=1}^\infty \subset \mathbb{N}$, such that, for every $x \in \Omega$,
        $$\mathcal{S}(g_{m_k}) \chi_{\G(x)} \longrightarrow \tilde{\mathcal{S}}(g) \chi_{\G(x)},
            \quad \text{as } k \to \infty, \text{ in }  \gamma\left( L^2\left( \mathbb{R}^2_+, \frac{dydt}{t^2}\right); X \right).$$
        Therefore, if $\mathcal{L}\left( L^2\left( \mathbb{R}_+^2, \frac{dydt}{t^2}\right) ,X \right)$ denotes
        the space of bounded operators from $L^2\left( \mathbb{R}_+^2, \frac{dydt}{t^2}\right)$ to $X$, we have that,
        for every $x \in \Omega$,
         $$\mathcal{S}(g_{m_k}) \chi_{\G(x)} \longrightarrow \tilde{\mathcal{S}}(g) \chi_{\G(x)},
            \quad \text{as } k \to \infty, \text{ in }  \mathcal{L}\left(L^2\left( \mathbb{R}^2_+, \frac{dydt}{t^2}\right), X \right).$$
        Let $x \in \Omega$. Suppose that $h \in L^2(\mathbb{R}_+^2, \frac{dydt}{t^2})$ such that $\supp h \subset \mathbb{R}^2_+$
        is compact. We can write
        \begin{align*}
            \left[ \tilde{\mathcal{S}}(g) \chi_{\G(x)} \right](h)
                = & \lim_{k \to \infty} \int_{\G(x)} \mathcal{S}(g_{m_k})(y,t) h(y,t) \frac{dydt}{t^2}
                = \int_{\G(x)} \mathcal{S}(g)(y,t) h(y,t) \frac{dydt}{t^2}.
        \end{align*}
        Then, we deduce that
        $$ \tilde{\mathcal{S}}(g) \chi_{\G(x)}
            =  \mathcal{S}(g) \chi_{\G(x)} .$$
        Thus, the proof of our result is finished.
    \end{proof}

    We now define the vector valued version of the Hardy space $H^1_o(\mathbb{R})$ introduced by Fridli \cite{Fr}. We say that a strongly measurable
    $X$-valued function $a$ defined on $(0,\infty)$ is an $o$-atom when satisfies one of the following two conditions:
    \begin{itemize}
         \item $a=b\chi_{(0,\delta)}/\delta,$ where $\delta>0$ and $b \in X$ with $\|b\|_X=1$.
        \item There exists a bounded interval $I \subset (0,\infty)$ such that $\supp(a) \subset I$, $\int_I a(x)dx=0$ and
        $\|a\|_{L^\infty((0,\infty),X)} \leq 1/|I|$.
    \end{itemize}

    A strongly measurable $X$-valued odd function $f$ defined on $\mathbb{R}$
    is in $H^1_o(\mathbb{R},X)$ when $f \chi_{(0,\infty)}=\sum_{j=1}^\infty \lambda_j a_j$,
    where, for every $j \in \mathbb{N}$, $a_j$ is an $o$-atom and $\lambda_j \in \mathbb{C}$ being $\sum_{j=1}^\infty |\lambda_j|<\infty$. As usual,
    the norm $\|f\|_{H^1_o(\mathbb{R},X)}$ of $f \in H^1_o(\mathbb{R},X)$ is defined by
    $$\|f\|_{H^1_o(\mathbb{R},X)} = \inf \sum_{j=1}^\infty |\lambda_j|,$$
    where the infimum is taken over all possible sequences $\{\lambda_j\}_{j = 1}^\infty \subset \mathbb{C}$ such that $\sum_{j=1}^\infty |\lambda_j|<\infty$
    and $f \chi_{(0,\infty)}=\sum_{j=1}^\infty \lambda_j a_j$ for a certain family of $\{a_j\}_{j=1}^\infty$ of $o$-atoms. Note that
    $$\|f\|_{L^1((0,\infty),X)} \leq \|f\|_{H^1_o(\mathbb{R},X)}, \quad f \in H^1_o(\mathbb{R},X).$$

    \begin{Lem}\label{Lem_L1}
        Let $X$ be a UMD Banach space and $\lambda>0$. Assume that $\phi \in S_\lambda(0,\infty)$
        is such that $x^\lambda \phi$ has vanishing integral over $(0,\infty)$. Then, there exists $C>0$ for which
        $$\|A^+(f \#_\lambda \phi_{(t)})\|_{L^1(0,\infty)}
            \leq C \|f\|_{H^1_o(\mathbb{R},X)}, \quad f \in H^1_o(\mathbb{R},X).$$
    \end{Lem}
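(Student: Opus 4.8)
The standard approach for an $H^1 \to L^1$ estimate is to reduce to atoms: by the atomic decomposition $f\chi_{(0,\infty)} = \sum_j \lambda_j a_j$ with $\sum_j |\lambda_j| < \infty$, subadditivity of $A^+$ (which follows from the triangle inequality for the $\gamma$-radonifying norm) gives
$$\|A^+(f \#_\lambda \phi_{(t)})\|_{L^1(0,\infty)} \le \sum_j |\lambda_j| \, \|A^+(a_j \#_\lambda \phi_{(t)})\|_{L^1(0,\infty)}.$$
So it suffices to prove a uniform bound $\|A^+(a \#_\lambda \phi_{(t)})\|_{L^1(0,\infty)} \le C$ for every $o$-atom $a$, with $C$ independent of $a$. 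There are two types of $o$-atoms, so I would treat them separately.

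For the cancellation-type atom (supported on an interval $I\subset(0,\infty)$ with $\int_I a = 0$ and $\|a\|_\infty \le 1/|I|$), the plan is the classical split of the integral $\int_0^\infty A^+(a\#_\lambda\phi_{(t)})(x)\,dx$ into a local part over a fixed dilate $2^*I$ of $I$ and a global part over its complement. On the local part I would use the Cauchy--Schwarz (or Hölder) inequality together with the $L^2$-boundedness of $A^+$ furnished by Lemma~\ref{Lem_Lp} (with $p=2$): since $|2^*I|^{1/2}\|A^+(a\#_\lambda\phi_{(t)})\|_{L^2} \le C|I|^{1/2}\|a\|_{L^2((0,\infty),X)} \le C|I|^{1/2}(|I|\,\|a\|_\infty^2)^{1/2} \le C$, the local contribution is uniformly bounded. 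For the global part, where $x$ is far from $I$, the vanishing moment of $a$ lets me subtract the value of the Bessel-translated kernel at the center of $I$ and exploit its smoothness/decay; the kernel estimates are exactly those assembled in the proof of Lemma~\ref{Lem_Lp} (the decompositions $\mathcal{K}_1,\mathcal{K}_2,\mathcal{K}_{1,j}$ and the bounds \eqref{K2}--\eqref{K14}, together with \eqref{second}), which give pointwise control of $\|{}_\lambda\tau_y(\phi_{(t)})(z)\|_{L^2(\G_+(x),\frac{dydt}{t^2})}$ by kernels of the form $|x-z|^{-1}$ and their derivatives. Integrating the resulting mean-oscillation estimate in $x$ over the complement of $2^*I$ produces a convergent integral, again uniformly in $a$.

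For the first type of atom, $a = b\chi_{(0,\delta)}/\delta$ with $\|b\|_X = 1$, there is no cancellation available, and this replaces the usual mean-zero condition by the \emph{oddness} built into the $BMO_o$/$H^1_o$ theory. Here I would use that $\phi$ (equivalently $x^\lambda\phi$) has vanishing integral: the hypothesis $\int_0^\infty x^\lambda\phi(x)\,dx=0$ is precisely what makes the Bessel convolution $a\#_\lambda\phi_{(t)}$ decay, and, as shown in \eqref{nula}, it forces the associated Euclidean kernel $\Psi$ to have vanishing integral as well. For $x$ comparable to or smaller than $\delta$ I again use the $L^2$ bound of Lemma~\ref{Lem_Lp}; for $x\gg\delta$ the cancellation of $\phi$ combined with the kernel decay estimates \eqref{K2}--\eqref{K14} yields a bound like $\delta/x^2$ (after integrating the kernel against the characteristic function of $(0,\delta)$), which is integrable over $(\delta,\infty)$ and gives a uniform constant.

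The main obstacle, and the point requiring the most care, is the global/far estimate in the cancellation-atom case: one cannot naively take the $\gamma$-norm of a difference of kernels pointwise because $\gamma$-norms are not simply integrals of absolute values. The right move is to pass, as in Lemma~\ref{Lem_Lp}, to the $\gamma = L^2$ identification valid for the relevant model kernels and to the Euclidean convolution structure (via the functions $\Psi$, $\Psi_t$), using the reflection/extension trick $\tilde f$ and the reduction \eqref{gamma} that dominates the $\gamma$-norm over $\G_+(x)$ by the $\gamma$-norm over the full cone $\G(x)$ in $\mathbb{R}^2_+$. Once the problem is transferred to the Euclidean half-space, the far-field analysis becomes the standard $H^1$ atomic estimate for a Calderón--Zygmund-type square function, so the genuinely new work is organizing the Bessel kernel decompositions so that the error terms $\mathcal{K}_1-\mathcal{K}_{1,1}$, $\mathcal{K}_{1,1}-\mathcal{K}_{1,2}$, and $\mathcal{K}_{1,4}$ contribute only integrable tails, while the main term $\mathcal{K}_{1,3}=\Psi_t(y-z)$ is handled by the vanishing integral of $\Psi$.
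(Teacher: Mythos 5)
Your plan is a legitimate alternative, but it takes a genuinely different route from the paper. The paper never decomposes $f$ into atoms: it reuses verbatim the pointwise domination of $A^+(f\#_\lambda\phi_{(t)})$ established in the proof of Lemma~\ref{Lem_Lp}, checks directly (using \eqref{K2}, \eqref{K1_global} and the operator $\mathcal{N}$) that every error term is bounded on $L^1$ by $\|f\|_{L^1((0,\infty),X)}\leq\|f\|_{H^1_o(\mathbb{R},X)}$, uses the \emph{oddness of $f$} to convert the truncated main term $\int_{x/2}^{2x}f(z)\Psi_t(y-z)\,dz$ into the full Euclidean convolution $(f*\Psi_t)(y)$ plus two more $L^1$-controlled errors, and then delegates the only term that really needs the $H^1$ structure to the Euclidean result \cite[Corollary 4.3]{HW}. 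Your approach instead performs the atomic analysis directly in the Bessel setting; what it buys is self-containedness (no appeal to the Hyt\"onen--Weis $H^1\to L^1$ theorem), at the cost of redoing the cancellation estimates by hand. Your local estimate via Lemma~\ref{Lem_Lp} with $p=2$ is exactly right, and your identification of $\mathcal{K}_{1,3}=\Psi_t(y-z)$ as the only term requiring cancellation is also correct.

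Two soft spots need attention before this becomes a proof. First, passing the triangle inequality through the \emph{infinite} atomic sum inside the $\gamma$-radonifying norm is not automatic: $\gamma\left(L^2((0,\infty)^2,\frac{dydt}{t^2});X\right)$ is not a Banach function space, and the Fatou-type property you are implicitly using holds only because a UMD space does not contain $c_0$; this must be stated (or circumvented by a density argument on finite sums). Second, your far-field argument slightly misattributes where the cancellations live. The bounds \eqref{K2}--\eqref{K14} are \emph{size} estimates only, not derivative estimates; fortunately, for the error kernels $\mathcal{K}_2$, $\mathcal{K}_1$ off the diagonal, $\mathcal{K}_1-\mathcal{K}_{1,1}$, $\mathcal{K}_{1,1}-\mathcal{K}_{1,2}$ and $\mathcal{K}_{1,4}$ these size bounds already integrate to a uniform constant over the complement of $2^*I$ (thanks to the factors $z^\lambda/|x-z|^{\lambda+1}$ and $\frac1z(1+\log_+\frac{z}{|x-z|})$), so no mean-value argument is needed there; the mean-value/cancellation step is needed only for $\Psi_t(y-z)$, where it is easy because $\Psi\in S(\mathbb{R})$. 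Likewise, for the atom $b\chi_{(0,\delta)}/\delta$ the far-field decay comes from the factor $z^\lambda$ and the Schwartz decay of $\Phi$, not from $\int_0^\infty x^\lambda\phi(x)\,dx=0$; the vanishing integral is what makes the \emph{local} $L^2$ bound of Lemma~\ref{Lem_Lp} finite in the first place (via \eqref{nula} and \cite[Theorem 4.2]{HW}). With these corrections your outline can be completed.
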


    \begin{proof}
        Let $f \in H^1_o(\mathbb{R},X)$. In the proof of Lemma~\ref{Lem_Lp} it was shown that
        \begin{align*}
            A^+(f \#_\lambda \phi_{(t)})(x)
                \leq & C \Big(\int_0^\infty    \|f(z)\|_X \left\| \mathcal{K}_2(z;y,t) \right\|_{L^2(\G_+(x),\frac{dydt}{t^2})} dz \\
                & + \int_{(0,x/2) \cup (2x,\infty)}  \|f(z)\|_X \left\|   \mathcal{K}_1(z;y,t)  \right\|_{L^2(\G_+(x),\frac{dydt}{t^2})} dz
                  + \mathcal{N}(\|f\|_X)(x) \\
                & + \left\|  \int_{x/2}^{2x} f(z) \Psi_t(y-z)  dz  \chi_{\G_+(x)}(y,t)\right\|_{\gamma\left(L^2((0,\infty)^2,\frac{dydt}{t^2});X\right)} \Big),
                \quad x \in (0,\infty).
        \end{align*}
        We analyze the $L^1$-boundedness of each operator in detail.
        As it was mentioned in the proof of Lemma~\ref{Lem_Lp}, $\mathcal{N}$ maps $L^1(0,\infty)$ into itself.
        The estimate \eqref{K2} allows us to write
        \begin{align*}
            \Big\|  \int_0^\infty  &  \|f(z)\|_X  \left\| \mathcal{K}_2(z;y,t) \right\|_{L^2(\G_+(x),\frac{dydt}{t^2})} dz \Big\|_{L^1(0,\infty)}
                \leq C \int_0^\infty \int_0^\infty \|f(z)\|_X \frac{z^\lambda}{(x+z)^{\lambda+1}}  dz dx\\
                = & C \int_0^\infty \|f(z)\|_X dz  \int_0^\infty \frac{dw}{(1+w)^{\lambda+1}}
                = C \|f\|_{L^1((0,\infty),X)}.
        \end{align*}
        Also by \eqref{K1_global}, we obtain
        \begin{align} \label{7.1}
            \Big\|  \int_{(0,x/2) \cup (2x,\infty)} & \|f(z)\|_X  \left\|   \mathcal{K}_1(z;y,t)  \right\|_{L^2(\G_+(x),\frac{dydt}{t^2})} dz  \Big\|_{L^1(0,\infty)}
                \leq C \int_0^\infty \int_{(0,x/2) \cup (2x,\infty)} \|f(z)\|_X \frac{z^\lambda}{|x-z|^{\lambda+1}} dz dx  \nonumber \\
                = & C \int_0^\infty \|f(z)\|_X \left( \int_0^{z/2} \frac{z^\lambda}{|x-z|^{\lambda+1}} dx + \int_{2z}^\infty \frac{z^\lambda}{|x-z|^{\lambda+1}} dx  \right) dz
                \leq C \|f\|_{L^1((0,\infty),X)}.
        \end{align}
        Since $f$ is an odd function, it has that
        \begin{align*}
            \int_{x/2}^{2x}  &f(z)\Psi_t(y-z) dz
                 = (f * \Psi_t)(y) - \left( \int_0^\infty f(z) \left[\Psi_t(y-z) - \Psi_t(y+z) \right] dz - \int_{x/2}^{2x} f(z) \Psi_t(y-z) dz \right) \\
                & = (f * \Psi_t)(y) - \left( \int_{(0,x/2)\cup(2x,\infty)} f(z) \left[\Psi_t(y-z) - \Psi_t(y+z) \right] dz - \int_{x/2}^{2x} f(z) \Psi_t(y+z) dz \right), \ x \in (0,\infty).
        \end{align*}
        Thus,
        \begin{align}\label{8.1}
            \Big\|  \int_{x/2}^{2x} f(z) & \Psi_t(y-z)  dz  \chi_{\G_+(x)}(y,t)\Big\|_{\gamma\left(L^2((0,\infty)^2,\frac{dydt}{t^2});X\right)}
                \leq \Big\| (f * \Psi_t)(y)\chi_{\G_+(x)}(y,t)  \Big\|_{\gamma\left(L^2((0,\infty)^2,\frac{dydt}{t^2});X\right)} \nonumber \\
                & + \int_{(0,x/2)\cup(2x,\infty)} \|f(z)\|_X \left\|\Psi_t(y-z) - \Psi_t(y+z) \right\|_{L^2(\G_+(x),\frac{dydt}{t^2})} dz \nonumber \\
                & + \int_{x/2}^{2x} \|f(z)\|_X \left\|\Psi_t(y+z) \right\|_{L^2(\G_+(x),\frac{dydt}{t^2})} dz, \quad x \in (0,\infty).
        \end{align}
        Now, we apply \eqref{gamma} and \cite[Corollary 4.3]{HW},
        \begin{align*}
            \Big\| \Big\| (f * \Psi_t)(y)  \chi_{\G_+(x)}(y,t)  \Big\|_{\gamma\left(L^2((0,\infty)^2,\frac{dydt}{t^2});X\right)} \Big\|_{L^1(0,\infty)}
                \leq &\Big\| \Big\| (f * \Psi_t)(y)\chi_{\G(x)}(y,t)  \Big\|_{\gamma\left(L^2(\mathbb{R}^2_+,\frac{dydt}{t^2});X\right)} \Big\|_{L^1(\mathbb{R})} \\
                \leq & C \|f\|_{H^1(\mathbb{R},X)}
                \leq  C \|f\|_{H^1_o(\mathbb{R},X)}.
        \end{align*}
        Finally, taking into account that $\Psi \in S(\mathbb{R})$ and proceeding as above we can see that
        \begin{equation*}
            \left\|\Psi_t(y+z) \right\|_{L^2(\G_+(x),\frac{dydt}{t^2})}
                \leq \frac{C}{x+z}, \quad x,z \in (0,\infty),
        \end{equation*}
        and also, by the mean value theorem,
        \begin{equation*}
            \left\|\Psi_t(y-z) - \Psi_t(y+z) \right\|_{L^2(\G_+(x),\frac{dydt}{t^2})}
                \leq C \frac{z}{|x-z|^2}, \quad x,z \in (0,\infty), \ x \neq z.
        \end{equation*}
        Hence, the $L^1$-norm of the second term in \eqref{8.1} can be estimated as in \eqref{7.1}, and for the third one we have
        \begin{align*}
            \left\| \int_{x/2}^{2x} \|f(z)\|_X \left\|\Psi_t(y+z) \right\|_{L^2(\G_+(x),\frac{dydt}{t^2})} dz \right\|_{L^1(0,\infty)}
                \leq & C \int_0^\infty \int_{x/2}^{2x} \frac{\|f(z)\|_X}{x+z} dz dx \\
                \leq & C  \int_0^\infty \|f(z)\|_X \int_{z/2}^{2z} \frac{dx}{x} dz
                \leq C \|f\|_{L^1((0,\infty),X)}.
        \end{align*}
        Thus, the proof of this lemma is completed.
    \end{proof}

    A straightforward adaptation of the arguments used in the proof of \cite[Theorem 4.8]{HW} allows us to show the
    following duality inequality.

    \begin{Lem}\label{Lem_4.8}
        Assume that $X$ is a Banach space and $0<q<\infty$. If $F : (0,\infty)^2 \longrightarrow X$ and
        $G: (0,\infty)^2 \longrightarrow X'$ are strongly measurable
        then, there exists $C>0$, such that
        $$ \int_0^\infty \int_0^\infty |\langle F(y,t) , G(y,t) \rangle| \frac{dy dt}{t}
            \leq C \int_0^\infty C_q^+(F)(y) A^+(G)(y)dy.$$
    \end{Lem}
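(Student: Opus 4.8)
The plan is to adapt, to the one-sided Bessel geometry and to the truncated functional $C_q^+$, the Coifman--Meyer--Stein stopping-time (tent space) argument behind \cite[Theorem 4.8]{HW}. First I would linearize the modulus: since $\langle F(y,t),G(y,t)\rangle$ is scalar, there is a measurable $\sigma$ with $|\sigma|\equiv1$ and $\langle\sigma F,G\rangle=|\langle F,G\rangle|$; because pointwise multiplication by a unimodular function is a unitary of $L^2((0,\infty)^2,\frac{dydt}{t^2})$ and the $\gamma$-norm is invariant under composition with unitaries, one has $A^+(\sigma F|r)=A^+(F|r)$ and hence $C_q^+(\sigma F)=C_q^+(F)$. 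Replacing $F$ by $\sigma F$, it suffices to estimate the genuine pairing $\int_0^\infty\int_0^\infty\langle F,G\rangle\frac{dydt}{t}$ with $\langle F,G\rangle\ge0$. Two tools drive the proof: the $\gamma$-radonifying trace duality $\big|\int\int_{\G_+^r(x)}\langle F,G\rangle\frac{dydt}{t^2}\big|\le A^+(F|r)(x)\,A^+(G|r)(x)\le A^+(F|r)(x)\,A^+(G)(x)$, valid for every Banach space $X$ since the trace pairing realizes $\gamma(L^2;X')$ inside the dual of $\gamma(L^2;X)$; and the cone-covering identity $\int_0^\infty\chi_{\G_+(x)}(y,t)\,dx\asymp t$, which interchanges the half-space measure $\frac{dydt}{t}$ with the conical measure $\frac{dydt}{t^2}$ integrated in $x$.

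Next I would run the stopping-time decomposition on the level sets $O_k=\{x\in(0,\infty):A^+(G)(x)>2^k\}$, $k\in\mathbb{Z}$. These are open, and a.e.\ point of $(0,\infty)^2$ at which $G\ne0$ lies in some truncated cone based on $\{A^+(G)<\infty\}$, so the half-space is covered, modulo a null set, by the layers $\widehat{O_k}\setminus\widehat{O_{k+1}}$. Decomposing each $O_k$ into its maximal subintervals $\{I_{k,j}\}_j$ and forming the associated one-sided truncated tents, I split $\int\int\langle F,G\rangle\frac{dydt}{t}=\sum_{k,j}\int\int_{T_{k,j}}$, where $T_{k,j}$ is the portion captured at generation $k$ over $I_{k,j}$ and has height comparable to $|I_{k,j}|$. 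The one geometric input is the density-of-tents estimate, which guarantees that for $(y,t)\in T_{k,j}$ a fixed proportion of the base $\{x:|x-y|<t\}\cap(0,\infty)$ lies in $I_{k,j}$ and outside $O_{k+1}$; this is what makes $A^+(G)(x)\asymp2^k$ usable on the base and matches the truncation height to the scale $|I_{k,j}|$.

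To assemble the bound, on each $T_{k,j}$ I would use the cone-covering identity to pass to an $x$-integral over $I_{k,j}$, turning $\frac{dydt}{t}$ into $\frac{dydt}{t^2}\,dx$, and then apply the trace duality at height $r\asymp|I_{k,j}|$ to get $\int\int_{T_{k,j}}\langle F,G\rangle\frac{dydt}{t}\le C\int_{I_{k,j}}A^+(F\,|\,c|I_{k,j}|)(x)\,A^+(G)(x)\,dx$. Because $A^+(G)\asymp2^k$ on the relevant part of $I_{k,j}$ and the truncated square functions $A^+(F||I|/2)$ averaged over intervals are exactly what define $C_q^+(F)$, Hölder's inequality in $x$ yields $(\frac{1}{|I_{k,j}|}\int_{I_{k,j}}A^+(F|c|I_{k,j}|)^q)^{1/q}\le C_q^+(F)(x)$ for $x\in I_{k,j}$; summing $\sum_{k,j}2^k|I_{k,j}|$ against $A^+(G)$ and using $\sum_k2^k|O_k|\asymp\int_0^\infty A^+(G)$ collapses the sum to $C\int_0^\infty C_q^+(F)(x)\,A^+(G)(x)\,dx$.

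The main obstacle is that the routine scalar proof uses Cauchy--Schwarz on each layer, which for a general Banach space $X$ would only produce $L^2(\|\cdot\|_X)$ square functions rather than the $\gamma$-radonifying norms appearing in $A^+$ and $C_q^+$; the essential substitution is the $\gamma$-trace duality, which holds with no geometric hypothesis on $X$. The remaining delicate bookkeeping, peculiar to this setting rather than to \cite[Theorem 4.8]{HW}, is to keep the Whitney truncation scale aligned with the height $|I|/2$ built into $C_q^+$, and to control the one-sided cones near the origin, where the $o$-atom boundary term $b\chi_{(0,\delta)}$ reflects that $\G_+(x)$ is genuinely truncated by the half-line; both are handled by the comparability $\int_0^\infty\chi_{\G_+(x)}\,dx\asymp t$ together with the fact that $C_q^+$ is a supremum over all intervals containing the point, so the correct scale is always available.
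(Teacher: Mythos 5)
Your strategy is exactly the one the paper points to: the Coifman--Meyer--Stein stopping-time argument behind \cite[Theorem 4.8]{HW}, transported to the one-sided cones. You also identify correctly the two vector-valued substitutes for Cauchy--Schwarz, namely the trace duality $\bigl|\iint_R\langle F,G\rangle\frac{dydt}{t^2}\bigr|\le\|F\chi_R\|_{\gamma(L^2;X)}\|G\chi_R\|_{\gamma(L^2;X')}$, valid for every Banach space $X$, and the unitary invariance of the $\gamma$-norm that makes the linearization by a unimodular $\sigma$ harmless; the covariance-domination step $A^+(G|r)\le A^+(G)$ and the one-sided cone-covering $\int_0^\infty\chi_{\G_+(x)}(y,t)\,dx\asymp t$ are likewise used correctly.

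There is, however, a genuine gap in the range $0<q<1$, which the lemma asserts and which the paper really needs (Theorem~\ref{Th_principal}$(ii)$ assumes only that $C_q^+(f\#_\lambda\phi_{(t)})\in L^\infty$ \emph{for some} $0<q<\infty$, and for $q<1$ this hypothesis is weaker than for $q=1$, so one cannot simply reduce to $q\ge1$). After the trace duality and the positive-density argument you arrive, on each Whitney interval $I=I_{k,j}$, at a bound of the form $2^k\int_IA^+(F\,|\,c|I|)(x)\,dx$, and to convert this into $2^k|I|\inf_{x\in I}C_q^+(F)(x)$ you must pass from the $L^1$-average $\frac{1}{|I|}\int_IA^+(F|c|I|)$ to the $L^q$-average $\bigl(\frac{1}{|I|}\int_IA^+(F|c|I|)^q\bigr)^{1/q}$. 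By Jensen this holds only for $q\ge1$; for $q<1$ the inequality is reversed, and there is no pointwise domination of $C_1^+(F)$ by $C_q^+(F)$. Covering $q<1$ requires an additional ingredient --- for instance a Chebyshev selection, inside each $I_{k,j}$, of a subset of proportion at least $1-\varepsilon$ on which $A^+(F|c|I|)\le C_\varepsilon\bigl(\frac{1}{|I|}\int_IA^+(F|c|I|)^q\bigr)^{1/q}$, followed by a further stopping-time iteration over the Whitney intervals of the exceptional set so that the positive-density hypothesis of the cone-covering identity survives at every scale; or, for the $L^\infty\times L^1$ form actually invoked in Section~\ref{sec:proof_Th}, an appeal to the $q$-independence of $\|C_q^+(F)\|_{L^\infty}$, which is itself a $\gamma$-John--Nirenberg inequality that would have to be proved in the Bessel setting. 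As written, your argument establishes the lemma only for $q\ge1$.
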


    Assume that $\phi, \psi \in S_\lambda(0,\infty)$. We say that $\phi$ and $\psi$ are
    $\lambda$-complementary functions when
    $$\int_0^\infty h_\lambda(\phi)(y) h_\lambda(\psi)(y) \frac{dy}{y^{2\lambda+1}}=1.$$
    The Hankel transformation $h_\lambda$ is an automorphism in $S_\lambda(0,\infty)$ (\cite[Lemma 8]{Ze}).
    Suppose that $\phi \in S_\lambda(0,\infty)$ is not identically zero. Then, there exists an interval
    $I \subset (0,\infty)$ such that $h_\lambda(\phi)(y) \neq 0$, $y \in I$. We choose $\varphi \in C_c^\infty(0,\infty)$ such that
    $\varphi \geq 0$, $\varphi(x)=1$, $x \in I$, and we define
    $$\psi=\frac{h_\lambda(\varphi) \#_\lambda \bar{\phi}}{\displaystyle \int_0^\infty |h_\lambda(\phi)(x)|^2 \varphi(x) \frac{dx}{x^{3\lambda+1}}}.$$
    We have that $\psi \in S_\lambda(0,\infty)$ and by \eqref{hank_conv} the functions $\phi$ and $\psi$ are $\lambda$-complementary.
    Note that, since $\varphi \in C_c^\infty(0,\infty)$, $\int_0^\infty y^\lambda \psi(y)dy=0$. Indeed, by \cite[p. 104, (5.4.3)]{L}
    and \eqref{hank_conv} we have that
    \begin{align*}
        \int_0^\infty y^\lambda \psi(y)dy
            = & 2^{\lambda-1/2} \G(\lambda+1/2) \lim_{x \to 0^+} \int_0^\infty (xy)^{-\lambda+1/2} J_{\lambda-1/2}(xy) y^\lambda \psi(y)dy \\
            = & 2^{\lambda-1/2} \G(\lambda+1/2) \lim_{x \to 0^+} x^{-\lambda} h_\lambda( \psi)(x)\\
            = & \frac{2^{\lambda-1/2} \G(\lambda+1/2)}{\displaystyle \int_0^\infty |h_\lambda(\phi)(y)|^2 \varphi(y) \frac{dy}{y^{3\lambda+1}}}
                    \lim_{x \to 0^+} x^{-2\lambda} \varphi(x) h_\lambda(\bar{\phi})(x)
            =0.
    \end{align*}

    \begin{Lem}\label{Lem_polarizacion}
        Let $\lambda>1$ and $0<q<\infty$. Assume that $(1+x^2)^{-1}f \in L^1(0,\infty)$ and $g \in L_c^\infty(0,\infty)$.
        Moreover, suppose that $\phi, \psi \in S_\lambda(0,\infty)$ are a pair of
        $\lambda$-complementary functions such that $\int_0^\infty x^\lambda \phi(x)dx=\int_0^\infty x^\lambda \psi(x)dx=0$.
        If $C_q^+(f \#_\lambda \phi_{(t)}) \in L^\infty(0,\infty)$ and $A^+(g \#_\lambda \psi_{(t)}) \in L^1(0,\infty)$, then
        \begin{equation}\label{polarizacion}
            \int_0^\infty f(x) g(x) dx = \int_0^\infty \int_0^\infty (f \#_\lambda \phi_{(t)})(y) (g \#_\lambda \psi_{(t)})(y) \frac{dydt}{t}.
        \end{equation}
    \end{Lem}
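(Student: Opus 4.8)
The plan is to transfer everything to the Hankel transform side, where the $\lambda$-complementary normalisation collapses the $t$-integral to the constant $1$, and to make this rigorous by truncating the $t$-integral and using Lemma~\ref{Lem_4.8} to control all the limits. First I would record the convergence underlying every interchange below. Since $C_q^+(f\#_\lambda\phi_{(t)})\in L^\infty(0,\infty)$ and $A^+(g\#_\lambda\psi_{(t)})\in L^1(0,\infty)$, Lemma~\ref{Lem_4.8} with $F=f\#_\lambda\phi_{(t)}$ and $G=g\#_\lambda\psi_{(t)}$ gives
\[
\int_0^\infty\int_0^\infty\bigl|(f\#_\lambda\phi_{(t)})(y)(g\#_\lambda\psi_{(t)})(y)\bigr|\frac{dy\,dt}{t}
\le C\,\|C_q^+(f\#_\lambda\phi_{(t)})\|_{L^\infty(0,\infty)}\,\|A^+(g\#_\lambda\psi_{(t)})\|_{L^1(0,\infty)}<\infty,
\]
so the right-hand side of \eqref{polarizacion} is absolutely convergent, Fubini is available, and the truncated integrals below will converge to it by dominated convergence.

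Second, the key algebraic step. A change of variables in the defining integral of $h_\lambda$ shows $h_\lambda(\phi_{(t)})(x)=t^{-\lambda}h_\lambda(\phi)(tx)$, and likewise for $\psi$. For fixed $t$, Fubini together with the self-adjointness of the Bessel translation and the commutativity and associativity of $\#_\lambda$ yields
\[
\int_0^\infty(f\#_\lambda\phi_{(t)})(y)(g\#_\lambda\psi_{(t)})(y)\,dy
=\int_0^\infty f(z)\,\bigl(g\#_\lambda(\phi_{(t)}\#_\lambda\psi_{(t)})\bigr)(z)\,dz,
\]
the interchange being justified by the decay of the translation kernel of $\phi_{(t)}\in S_\lambda(0,\infty)$ against $(1+x^2)^{-1}f\in L^1$. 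Writing $\Theta_t=\phi_{(t)}\#_\lambda\psi_{(t)}$ and, for $0<\varepsilon<N<\infty$, $\Theta^{\varepsilon,N}=\int_\varepsilon^N\Theta_t\,\frac{dt}{t}$, I would integrate the previous display over $t\in(\varepsilon,N)$ and apply Fubini once more to obtain $\int_\varepsilon^N\!\int_0^\infty(f\#_\lambda\phi_{(t)})(g\#_\lambda\psi_{(t)})\,dy\,\frac{dt}{t}=\int_0^\infty f(z)\,(g\#_\lambda\Theta^{\varepsilon,N})(z)\,dz$.

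Third, I would evaluate the multiplier. Using \eqref{hank_conv}, the dilation formula above, and the substitution $s=tx$, one gets $h_\lambda(g\#_\lambda\Theta^{\varepsilon,N})(x)=h_\lambda(g)(x)\,m_{\varepsilon,N}(x)$, where $m_{\varepsilon,N}(x)=\int_{\varepsilon x}^{Nx}h_\lambda(\phi)(s)h_\lambda(\psi)(s)\,\frac{ds}{s^{2\lambda+1}}$. Here the vanishing-moment hypotheses $\int_0^\infty x^\lambda\phi=\int_0^\infty x^\lambda\psi=0$ force $h_\lambda(\phi)(s),h_\lambda(\psi)(s)$ to vanish faster than $s^\lambda$ as $s\to0^+$, while their membership in $S_\lambda(0,\infty)$ controls $s\to\infty$; hence the integrand is absolutely integrable on $(0,\infty)$, the family $\{m_{\varepsilon,N}\}$ is uniformly bounded, and by the $\lambda$-complementary condition $m_{\varepsilon,N}(x)\to1$ pointwise as $\varepsilon\to0^+$, $N\to\infty$. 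Since $g\in L_c^\infty(0,\infty)\subset L^2(0,\infty)$, dominated convergence on the transform side then gives $g\#_\lambda\Theta^{\varepsilon,N}\to g$ in $L^2(0,\infty)$.

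The main obstacle is the final passage to the limit on the right-hand side, namely $\int_0^\infty f(z)(g\#_\lambda\Theta^{\varepsilon,N})(z)\,dz\to\int_0^\infty f(z)g(z)\,dz$: because $f$ lies only in the weighted space $(1+x^2)^{-1}f\in L^1$ and not in $L^2$, the $L^2$-convergence just obtained does not suffice to pair against $f$. To close this I would establish a uniform decay bound $|(g\#_\lambda\Theta^{\varepsilon,N})(z)|\le C(1+z^2)^{-1}$ together with pointwise convergence $(g\#_\lambda\Theta^{\varepsilon,N})(z)\to g(z)$ a.e., and then invoke dominated convergence with majorant $C(1+z^2)^{-1}|f(z)|\in L^1$. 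Such kernel estimates are of the same flavour as those already carried out in the proof of Lemma~\ref{Lem_Lp}, exploiting the Schwartz decay of the functions associated with $\phi,\psi$ and the cancellation from the vanishing moments, so the bound should follow by the same techniques; alternatively, moving $\Theta^{\varepsilon,N}$ onto $f$ via $\int f(g\#_\lambda\Theta^{\varepsilon,N})=\int(f\#_\lambda\Theta^{\varepsilon,N})g$ and using that $g$ is bounded with compact support reduces the matter to the local approximate-identity convergence $f\#_\lambda\Theta^{\varepsilon,N}\to f$. Combining this limit with the first-step absolute convergence yields \eqref{polarizacion}.
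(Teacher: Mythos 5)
Your proposal follows essentially the same route as the paper's proof: absolute convergence via Lemma~\ref{Lem_4.8}, truncation of the $t$-integral, transfer to the Hankel side where the $\lambda$-complementarity makes the multiplier $m_{\varepsilon,N}\to 1$, and a dominated-convergence argument against $f$ based on the uniform bound $|(g\#_\lambda\Theta^{\varepsilon,N})(z)|\le C(1+z^2)^{-1}$ (the paper's estimate \eqref{F7}, obtained by showing $G=\int_1^\infty\phi_{(t)}\#_\lambda\psi_{(t)}\,\frac{dt}{t}\in S_\lambda(0,\infty)$ and exploiting the compact support of $g$, which is where $\lambda>1$ enters). The two places you leave as ``should follow by the same techniques'' --- the Fubini justification \eqref{F2} and the weighted decay bound --- are exactly the technical core the paper works out in detail, but your identification of what must be proved and of the ingredients (Schwartz-type decay, vanishing moments, compact support) is correct.
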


    \begin{proof}
        Assume that $C_q^+(f \#_\lambda \phi_{(t)}) \in L^\infty(0,\infty)$ and $A^+(g \#_\lambda \psi_{(t)}) \in L^1(0,\infty)$.
        According to Lemma~\ref{Lem_4.8} the integral in the right hand side of \eqref{polarizacion} is absolutely convergent.
        Hence we can write
        \begin{equation}\label{R1}
            \int_0^\infty \int_0^\infty (f \#_\lambda \phi_{(t)})(y) (g \#_\lambda \psi_{(t)})(y) \frac{dydt}{t}
                = \lim_{N \to \infty} \int_{1/N}^N \int_0^\infty (f \#_\lambda \phi_{(t)})(y) (g \#_\lambda \psi_{(t)})(y) \frac{dydt}{t}.
        \end{equation}
        Our next objective is to establish that
        \begin{equation}\label{R2}
        \int_{1/N}^N \int_0^\infty (f \#_\lambda \phi_{(t)})(y) (g \#_\lambda \psi_{(t)})(y) dy \frac{dt}{t}
            = \int_0^\infty f(z)  \int_{1/N}^N \int_0^\infty \,_\lambda\tau_y (\phi_{(t)})(z) (g \#_\lambda \psi_{(t)})(y) dy \frac{dt}{t} dz, \ N \in \mathbb{N}.
        \end{equation}
        In order to do this we will prove that, for every $N \in \mathbb{N}$, there exists $C_N>0$ such that
        \begin{equation}\label{F2}
             \int_0^\infty \left|\,_\lambda\tau_y (\phi_{(t)})(z) (g \#_\lambda \psi_{(t)})(y) \right| dy
                \leq \frac{C_N}{1+z^2}, \quad  t \in (1/N,N) \text{ and } z \in (0,\infty).
        \end{equation}

        As in the proof of Lemma~\ref{Lem_Lp} we choose a function $\Phi \in S(\mathbb{R})$ such that
        $\Phi(z^2)=\phi(z)z^{-\lambda}$, $z \in (0,\infty)$. By using \cite[p. 86]{MS} we have that
        \begin{align}\label{B3}
            |\,_\lambda\tau_y (\phi_{(t)})(z)|
                \leq & C \frac{(yz)^\lambda}{t^{2\lambda+1}}
                        \int_0^\pi  (\sin \theta)^{2\lambda-1} \left|\Phi \left( \frac{(y-z)^2+2yz(1-\cos \theta)}{t^2} \right) \right| d\theta \nonumber \\
                \leq & C t (yz)^\lambda \int_0^\pi    \frac{(\sin \theta)^{2\lambda-1}}{\left[(y-z)^2+t^2+2yz(1-\cos \theta)\right]^{\lambda+1}}  d\theta
                \leq C \frac{t}{t^2+(y-z)^2}, \quad t,y,z \in (0,\infty).
        \end{align}

        Since $g \in L^\infty_c(0,\infty)$, by taking into account that
        $\frac{d}{dz}(z^{-\mu}J_\mu(z))=-z^{-\mu}J_{\mu+1}(z)$, $z \in (0,\infty)$, and that
        $\sqrt{z}J_\mu(z)$ is a bounded function on $(0,\infty)$, for every $\mu>-1/2$, we obtain
        $$\frac{d}{dx} h_\lambda(g)(x)
            = \frac{\lambda}{x} h_\lambda(g)(x) - h_{\lambda+1}(yg)(x), \quad x \in (0,\infty).$$
        By iterating this argument we conclude that $h_\lambda(g) \in C^\infty(0,\infty)$. Moreover,
        since $\sqrt{z}J_\mu(z)$ and $z^{-\mu}J_\mu(z)$ are bounded functions on $(0,\infty)$, for every $\mu>-1/2$,
        the functions $h_\lambda(g)$ and $\frac{d}{dx}h_\lambda(g)$ are bounded on $(0,\infty)$ because $\lambda>1$.
        According to
        \cite[Lemma 8]{Ze}, $h_\lambda(\psi_{(t)}) \in S_\lambda(0,\infty)$,
        $t>0$. Then, by taking into account \eqref{2.1} and that $\lambda>1$, we get
        \begin{align*}
            h_\lambda\left(u^{-\lambda} h_\lambda (\psi_{(t)})(u) h_\lambda(g)(u)\right)(x)
                = & \frac{1}{1+x^2} h_\lambda\left((1+\Delta_{\lambda,u})\left(u^{-\lambda} h_\lambda(\psi_{(t)})(u) h_\lambda(g)(u)\right)\right)(x),
                \quad x \in (0,\infty).
        \end{align*}
        On the other hand, $y^\lambda g \in L^1(0,\infty)$ and $\psi \in S_\lambda(0,\infty)$.
        Then, $y^\lambda(\psi_{(t)} \#_\lambda g) \in L^1(0,\infty)$ and $h_\lambda(\psi_{(t)})(y) h_\lambda(g)(y) \in L^1(0,\infty)$, and we obtain
       \begin{align*}
            (g\#_\lambda \psi_{(t)})(x)
                = & \frac{1}{1+x^2} h_\lambda \left((1+\Delta_{\lambda}) \left( u^{-\lambda} h_\lambda(\psi_{(t)})(u) h_\lambda(g)(u) \right) \right)(x), \quad x \in (0,\infty).
        \end{align*}
        We write
        $$1 + \Delta_\lambda
            = 1-(2\lambda+1)x^\lambda \left( \frac{1}{x} D \right) x^{-\lambda}
              - x^{\lambda+2} \left( \frac{1}{x} D \right)^2 x^{-\lambda}. $$
        Then,
        \begin{align*}
            (1+\Delta_{\lambda}) & \left( u^{-\lambda} h_\lambda(\psi_{(t)})(u) h_\lambda(g)(u) \right)
                =  (ut)^{-\lambda} h_\lambda(\psi)(ut) h_\lambda(g)(u) \\
                  &  - (2\lambda+1)u^\lambda \left[ \left( \frac{1}{u} D \right) \left( (ut)^{-\lambda} h_\lambda(\psi)(ut)  \right) u^{-\lambda}h_\lambda(g)(u)
                   +  (ut)^{-\lambda} h_\lambda(\psi)(ut)\left( \frac{1}{u} D \right) \left(u^{-\lambda} h_\lambda(g)(u) \right) \right] \\
                  & - u^{\lambda+2} \left[  \left( \frac{1}{u} D \right)^2 \left( (ut)^{-\lambda} h_\lambda(\psi)(ut) \right)
                   u^{-\lambda} h_\lambda(g)(u)
                   + 2 \left( \frac{1}{u} D \right) \left( (ut)^{-\lambda} h_\lambda(\psi)(ut) \right)  \right. \\
                  & \left. \cdot \left( \frac{1}{u} D \right)\left( u^{-\lambda} h_\lambda(g)(u) \right)
                   + (ut)^{-\lambda} h_\lambda(\psi)(ut) \left( \frac{1}{u} D \right)^2 \left( u^{-\lambda} h_\lambda(g)(u) \right) \right], \quad t,u \in (0,\infty).
        \end{align*}
        By using again that $h_\lambda(\psi) \in S_\lambda(0,\infty)$ and that the function $z^{-\mu} J_\mu(z)$
        is bounded on $(0,\infty)$, for every $\mu>-1/2$, we deduce that, for some $m \in \mathbb{N}$,
        $$\left| h_\lambda \left((1+\Delta_{\lambda}) \left( u^{-\lambda} h_\lambda(\psi_{(t)})(u) h_\lambda(g)(u) \right) \right)(x) \right|
            \leq C t^{-m}, \quad t, x \in (0,\infty).$$
        Putting together the above estimates we get,
        \begin{align*}
            \int_0^\infty \Big| \,_\lambda\tau_y (\phi_{(t)})(z) &(g \#_\lambda \psi_{(t)})(y) dy \Big|
                \leq \frac{C}{t^m} \int_0^\infty \frac{t}{(t^2+(y-z)^2)(1+y^2)} dy \\
                \leq & \frac{C}{t^m} \left( \frac{t}{t^2+z^2} \int_0^{z/2} \frac{dy}{1+y^2} + \frac{t}{1+z^2} \int_{z/2}^\infty \frac{dy}{t^2+(y-z)^2}  \right) \\
                \leq & \frac{CN^{m+1}}{1/N^2+z^2} \int_0^{\infty} \frac{dy}{1+y^2} + \frac{C N^{m+1}}{1+z^2} \int_{0}^\infty \frac{dy}{t^2+(y-z)^2}  \\
                \leq & \frac{C_N}{1+z^2}, \quad t \in (1/N,N), \ z \in (0,\infty).
        \end{align*}
        Thus \eqref{F2} is established.\\

        Since $g \in L^\infty_c(0,\infty)$, $g$ defines an element $T_g \in S_\lambda(0,\infty)'$ by
        $$\langle T_g , \varphi  \rangle
            = \int_0^\infty g(x) \varphi(x) dx, \quad \varphi \in S_\lambda(0,\infty).$$
        According to \cite{BeMa} we can write
        \begin{align*}
            \int_0^\infty \,_\lambda\tau_y (\phi_{(t)})(z)  (g \#_\lambda \psi_{(t)})(y) dy
                = & \left(\left( T_g  \#_\lambda \psi_{(t)} \right) \#_\lambda \phi_{(t)}\right)(z)
                =  \left( T_g  \#_\lambda \left(\psi_{(t)}  \#_\lambda \phi_{(t)}\right)\right)(z) \\
                = & \left( g  \#_\lambda \left(\psi_{(t)}  \#_\lambda \phi_{(t)}\right)\right)(z), \quad z \in (0,\infty).
        \end{align*}
        Hence, for every $\varphi \in S_\lambda(0,\infty)$, $T_g \#_\lambda \varphi$ is defined by
        $$(T_g \#_\lambda \varphi)(x)
            = \langle T_g , \,_\lambda\tau_x \varphi \rangle, \quad x \in (0,\infty),$$
        (see \cite{BeMa} for details about distributional Hankel convolution).\\

        Note that, by using the interchange formula, we get
        \begin{align*}
            h_\lambda \left( \psi_{(t)} \#_\lambda \phi_{(t)} \right)(x)
                = & x^{-\lambda} h_\lambda(\psi_{(t)})(x) h_\lambda(\phi_{(t)}) (x)
                = \frac{x^{-\lambda}}{t^\lambda} h_\lambda(\psi)(xt) \frac{1}{t^\lambda} h_\lambda(\phi) (xt) \\
                = &  \frac{1}{t^\lambda}  h_\lambda \left( \psi\#_\lambda \phi \right)(xt)
                =  h_\lambda \left( (\psi\#_\lambda \phi)_{(t)} \right)(x), \quad t,x \in (0,\infty).
        \end{align*}
        Hence, $\psi_{(t)} \#_\lambda \phi_{(t)} = (\psi\#_\lambda \phi)_{(t)} $, $t \in (0,\infty)$.
        Since $\psi\#_\lambda \phi \in S_\lambda(0,\infty)$ (\cite[Proposition 2.2, $(i)$]{MB})
        and $g \in L^\infty_c(0,\infty)$, from \eqref{B3} it follows that, for every $N \in \mathbb{N}$,
        \begin{align*}
            \int_{1/N}^N \int_0^\infty |g(y) \,_\lambda\tau_z (\psi_{(t)} \#_\lambda \phi_{(t)})(y)| \frac{dydt}{t}
                \leq & C \int_{1/N}^N \int_0^\infty  \frac{|g(y)| t}{t^2+(y-z)^2} \frac{dydt}{t}
                \leq  C \int_{1/N}^N \frac{dt}{t^2} \int_0^\infty  |g(y)| dy < \infty.
        \end{align*}
        In addition, we have that
        \begin{align*}
            \int_{1/N}^N \int_0^\pi (\sin \theta)^{2\lambda-1}
                    \frac{\left|(\psi \#_\lambda \phi)_{(t)}\left(\sqrt{(z-y)^2+2zy(1-\cos \theta)}\right) \right|}{((z-y)^2+2zy(1-\cos \theta))^{\lambda/2}} d\theta \frac{dt}{t}
                < \infty, \quad y,z \in (0,\infty), \ N \in \mathbb{N}.
        \end{align*}

        Thus, we obtain
        \begin{align}\label{R3}
            \int_{1/N}^N  \int_0^\infty \,_\lambda\tau_y (\phi_{(t)})(z) (g \#_\lambda \psi_{(t)})(y) dy  \frac{dt}{t}
                = &  \left( g  \#_\lambda \left( \int_{1/N}^N  \phi_{(t)} \#_\lambda \psi_{(t)} \frac{dt}{t}\right) \right)(z),
                \quad z \in (0,\infty).
        \end{align}
        The inner integral can be written as
        \begin{align*}
            \int_{1/N}^N  (\phi_{(t)} \#_\lambda \psi_{(t)})(u) \frac{dt}{t}
                = & \int_{1/N}^\infty  (\phi_{(t)} \#_\lambda \psi_{(t)})(u) \frac{dt}{t} - \int_N^\infty  (\phi_{(t)} \#_\lambda \psi_{(t)})(u) \frac{dt}{t} \\
                = & G_{(1/N)}(u)-G_{(N)}(u), \quad u \in (0,\infty) \text{ and } N \in \mathbb{N},
        \end{align*}
        where
        $$G(u)=\int_1^\infty (\phi_{(t)} \#_\lambda \psi_{(t)})(u) \frac{dt}{t}, \quad u \in (0,\infty).$$

        Since $\int_0^\infty y^\lambda \phi(y) dy = \int_0^\infty y^\lambda \psi(y) dy =0$, we deduce
        \begin{align*}
            \int_0^\infty  y^\lambda (\phi \#_\lambda \psi)(y)dy
                = & 2^{\lambda-1/2} \G(\lambda+1/2) \lim_{x \to 0^+} \int_0^\infty  y^\lambda (xy)^{-\lambda+1/2} J_{\lambda-1/2}(xy) (\phi \#_\lambda \psi)(y)  dy \\
                = & 2^{\lambda-1/2} \G(\lambda+1/2) \lim_{x \to 0^+} x^{-\lambda}  \int_0^\infty   (xy)^{1/2} J_{\lambda-1/2}(xy) (\phi \#_\lambda \psi)(y)  dy \\
                = & 2^{\lambda-1/2} \G(\lambda+1/2) \lim_{x \to 0^+} x^{-2\lambda} h_\lambda(\phi)(x) h_\lambda(\psi)(x)  \\
                = & \frac{1}{2^{\lambda-1/2}\G(\lambda+1/2)} \int_0^\infty y^\lambda \phi(y) dy \int_0^\infty y^\lambda \psi(y) dy=0.
        \end{align*}
        Moreover, by using \cite[(3), p. 135, and Lemma 5.4-1, (3)]{Ze2} and taking into account that
        $\phi \#_\lambda \psi \in S_\lambda(0,\infty)$, we obtain
        \begin{align}\label{null}
            \int_0^\infty  y^{2\lambda+2k} & \left( \frac{1}{y} \frac{d}{dy} \right)^k \left(y^{-\lambda}(\phi \#_\lambda \psi)(y) \right) dy \nonumber \\
                = & 2^{\lambda + k - 1/2} \G(\lambda+k+1/2) \lim_{x \to 0^+} x^{-\lambda-k}
                    h_{\lambda+k} \left( y^{\lambda+k} \left( \frac{1}{y} \frac{d}{dy} \right)^k \left(y^{-\lambda}(\phi \#_\lambda \psi)(y) \right) \right)(x)  \nonumber \\
                = & 2^{\lambda + k - 1/2} \G(\lambda+k+1/2) (-1)^k \lim_{x \to 0^+} x^{-\lambda}
                    h_{\lambda} \left(\phi \#_\lambda \psi\right)(x)  \nonumber \\
                = & \frac{2^{k} \G(\lambda+k+1/2) (-1)^k}{\G(\lambda+1/2)} \int_0^\infty  y^\lambda (\phi \#_\lambda \psi)(y)dy
                = 0 , \quad k \in \mathbb{N}.
        \end{align}
        Let $k,m \in \mathbb{N}$. Then, we can write
        \begin{align*}\label{exp}
            u^m \left( \frac{1}{u} \frac{d}{du} \right)^k \left( u^{-\lambda} G(u) \right)
                = & u^m \left( \frac{1}{u} \frac{d}{du} \right)^k \int_1^\infty \left( \frac{u}{t} \right)^{-\lambda} (\phi \#_\lambda \psi)\left(\frac{u}{t}\right) \frac{dt}{t^{2\lambda+2}} \nonumber \\
                = & u^m \int_1^\infty  \left( \frac{1}{v} \frac{d}{dv} \right)^k \left(v^{-\lambda} (\phi \#_\lambda \psi) (v) \right)_{|v=u/t} \frac{dt}{t^{2\lambda+2k+2}} \nonumber \\
                = &  u^{m-2k-2\lambda-1}  \int_0^u v^{2\lambda+2k} \left( \frac{1}{v} \frac{d}{dv} \right)^k \left(v^{-\lambda} (\phi \#_\lambda \psi)(v)\right) dv.
        \end{align*}
        By \eqref{null} and by applying L'Hôpital's rule we deduce that
        \begin{align*}
            \lim_{u \to 0^+} \left| u^m \left( \frac{1}{u} \frac{d}{du} \right)^k \left( u^{-\lambda} G(u) \right) \right|
            & \leq C \lim_{u \to 0^+} \left| u^{-2k-2\lambda-1}  \int_0^u v^{2\lambda+2k} \left( \frac{1}{v} \frac{d}{dv} \right)^k \left(v^{-\lambda} (\phi \#_\lambda \psi)(v)\right) dv \right| \\
            & \leq C \beta^\lambda_{0,k}(\phi \#_\lambda \psi) < \infty,
        \end{align*}
        and, if $m > 2k+2\lambda +1$,
        \begin{align*}
            \lim_{u \to \infty} \left| u^m \left( \frac{1}{u} \frac{d}{du} \right)^k \left( u^{-\lambda} G(u) \right) \right|
            & = \lim_{u \to \infty} \left| u^{m-2k-2\lambda-1}  \int_0^u v^{2\lambda+2k} \left( \frac{1}{v} \frac{d}{dv} \right)^k \left(v^{-\lambda} (\phi \#_\lambda \psi)(v)\right) dv \right| \\
            & \leq C \beta^\lambda_{m,k}(\phi \#_\lambda \psi) < \infty.
        \end{align*}
        When $m \leq  2k+2\lambda +1$, this last limit is equal to zero. We conclude that $G \in S_\lambda(0,\infty)$.\\

        Take $\alpha>0$ such that $\supp g \subset [0,\alpha]$. According to \eqref{B3}, we get
        \begin{align*}
            \left| g \#_\lambda G_{(s)}(z) \right|
                \leq & \|g\|_{L^\infty(0,\infty)} \int_0^\infty |\,_\lambda\tau_z (G_{(s)})(y)| dy
                \leq  C \int_0^\infty \frac{s}{(z-y)^2+s^2} dy \\
                \leq & C
                \leq \frac{C}{1+z^2}
                , \quad 0<z \leq 2\alpha \text{ and } s \in (0,\infty).
        \end{align*}
        Moreover, from the penultimate estimate in \eqref{B3}, it follows that
        \begin{align*}
            \left| g \#_\lambda G_{(s)}(z) \right|
                \leq & C \|g\|_{L^\infty(0,\infty)} \int_0^\alpha |\,_\lambda\tau_z (G_{(s)})(y)| dy
                \leq  C \int_0^\alpha \frac{(zy)^\lambda}{|z-y|^{2\lambda+1}} dy \\
                \leq &  \frac{C}{z^{1+\lambda}}
                \leq \frac{C}{1+z^2}
                , \quad z>2\alpha \text{ and } s \in (0,\infty),
        \end{align*}
        because $\lambda>1$.\\

        Hence, we conclude that
        \begin{equation}\label{F7}
            \sup_{N \in \mathbb{N}} \left|  \left( g  \#_\lambda \left( \int_{1/N}^N  \phi_{(t)} \#_\lambda \psi_{(t)} \frac{dt}{t}\right) \right)(z) \right|
                \leq \frac{C}{1+z^2}, \quad z \in (0, \infty).
        \end{equation}
        Since the function $\sqrt{z}J_\mu(z)$ is bounded in $(0,\infty)$, for every $\mu>-1/2$, \eqref{F7} and
        the interchange formula \eqref{hank_conv} imply that
        \begin{align*}
            h_\lambda \left( g  \#_\lambda \left( \int_{1/N}^N  \phi_{(t)} \#_\lambda \psi_{(t)} \frac{dt}{t}\right)  \right)(z)
                = & \int_{1/N}^N h_\lambda\left( g  \#_\lambda(\phi_{(t)} \#_\lambda \psi_{(t)})\right)(z) \frac{dt}{t} \\
                = & h_\lambda(g)(z) \int_{z/N}^{Nz}  h_\lambda(\phi)(y) h_\lambda(\psi)(y)\frac{dy}{y^{2\lambda+1}},
                \ z \in (0,\infty) \text{ and } N \in \mathbb{N}.
        \end{align*}
        Since $h_\lambda(\phi)$, $h_\lambda(\psi) \in S_\lambda(0,\infty)$ and
        $\int_0^\infty y^\lambda \phi(y)dy=\int_0^\infty y^\lambda \psi(y)dy=0$, \cite[Lemma 5.2-1]{Ze2}
        allows us to see that
        $\int_0^\infty |h_\lambda(\phi)(y)|  |h_\lambda(\psi)(y)| \frac{dy}{y^{2\lambda+1}}<\infty.$
        Moreover, $h_\lambda$ is bounded from $L^2(0,\infty)$ into itself.
        Then, by taking into account again that $\phi$ and $\psi$ are complementary functions, the dominated convergence
        theorem implies that
        $$\lim_{N \to \infty} h_\lambda(g) \int_{z/N}^{Nz}  h_\lambda(\phi)(y) h_\lambda(\psi)(y)\frac{dy}{y^{2\lambda+1}}
            =  h_\lambda(g), $$
        in $L^2(0,\infty)$, and also that
        $$\lim_{N \to \infty} \int_{1/N}^{N}  g  \#_\lambda(\phi_{(t)} \#_\lambda \psi_{(t)}) \frac{dt}{t}
            = g, $$
        in $L^2(0,\infty)$. There exists an increasing sequence $\{N_k\}_{k=1}^\infty \subset \mathbb{N}$ for which
        \begin{equation}\label{R4}
            \lim_{k \to \infty} \int_{1/N_k}^{N_k}  g  \#_\lambda(\phi_{(t)} \#_\lambda \psi_{(t)})(z) \frac{dt}{t}
            = g(z), \quad \text{a.e. } z \in (0,\infty).
        \end{equation}
        By applying again the dominated convergence theorem (see \eqref{F7}) and by \eqref{R1}, \eqref{R2}, \eqref{R3}, \eqref{R4} we obtain
        \begin{align*}
            \int_0^\infty \int_0^\infty  (f \#_\lambda \phi_{(t)})(y) (g \#_\lambda \psi_{(t)})(y) \frac{dydt}{t}
                =& \int_0^\infty f(z) \lim_{k \to \infty} \int_{1/N_k}^{N_k} \left( g  \#_\lambda \phi_{(t)} \#_\lambda \psi_{(t)}\right)(z)  \frac{dt}{t}dz \\
                = & \int_0^\infty f(z) g(z) dz.
        \end{align*}
        Thus, the proof is completed.
    \end{proof}

    \section{Proof of Theorem~\ref{Th_principal}}  \label{sec:proof_Th}

    In this section we present the proof of Theorem~\ref{Th_principal}.

    \subsection{Proof of Theorem~\ref{Th_principal}, $\pmb{(i)}$} \label{subsec:i}

    By taking into account Hölder's inequality it is sufficient to show $(i)$ when $q>1$.
    Hence, from now on we assume that $q>1$. Let $f \in BMO_o(\mathbb{R},X)$. Our objective is to prove
    that there exists $C>0$, that does not depend on $f$, such that,
    \begin{equation}\label{objt2}
        \left( \frac{1}{|I|} \int_I A^+(f \#_\lambda \phi_{(t)} \big||I|/2)^q(x) dx \right)^{1/q}
            \leq C \|f\|_{BMO_o(\mathbb{R},X)},
    \end{equation}
    for every bounded interval $I \subset (0,\infty)$.\\

    We take a bounded interval $I=(x_I-|I|/2,x_I+|I|/2) \subset (0,\infty)$. The function $f$ is decomposed as follows
    $$f \chi_{(0,\infty)} = (f-f_{3I})\chi_{3I} + (f-f_{3I})\chi_{(0,\infty) \setminus 3I} + f_{3I}=f_1 + f_2 + f_3,$$
    where $3I=(0,\infty) \cap (x_I-3|I|/2,x_I+3|I|/2)$. We write $F_i=f_i \#_\lambda \phi_{(t)}$, $i=1,2,3$.
    The estimation in \eqref{objt2} will be shown when we establish that
     \begin{equation}\label{27.1}
        \left( \frac{1}{|I|} \int_I A^+(F_i \big||I|/2)^q(x) dx \right)^{1/q}
            \leq C \|f\|_{BMO_o(\mathbb{R},X)}, \quad i=1,2,3.
    \end{equation}
    From \cite[Proposition 1.1]{NeWe} we deduce that
    $$\|F_1(y,t) \chi_{\G_+^{|I|/2}(x)}(y,t)\|_{\gamma\left(L^2((0,\infty)^2,\frac{dydt}{t^2});X\right)}
        \leq \|F_1(y,t) \chi_{\G_+(x)}(y,t)\|_{\gamma\left(L^2((0,\infty)^2,\frac{dydt}{t^2});X\right)}, \ x \in (0,\infty).$$
    Then, Lemma~\ref{Lem_Lp} implies that
    \begin{align*}
        \left( \frac{1}{|I|} \int_I A^+(F_1 \big||I|/2)^q(x) dx \right)^{1/q}
            \leq &  \frac{1}{|I|^q} \|A^+(F_1)\|_{L^q(0,\infty)}
            \leq C \left( \frac{1}{|3I|} \int_{3I} \|f(x)-f_{3I}\|_X^q dx \right)^{1/q} \\
            \leq & C \|f\|_{BMO_o(\mathbb{R},X)}.
    \end{align*}

    We now proof \eqref{27.1} for $i=2$. Observe that
    \begin{align*}
        A^+(F_2 \big||I|/2)(x)
            = & \| (f_2 \#_\lambda \phi_{(t)})(y) \chi_{\G_+^{|I|/2}(x)}(y,t)\|_{\gamma\left(L^2((0,\infty)^2,\frac{dydt}{t^2});X\right)} \\
            \leq & \int_{(0,\infty)\setminus 3I} \left\| (f(z)-f_{3I}) \,_\lambda\tau_y (\phi_{(t)})(z) \chi_{\G_+^{|I|/2}(x)}(y,t) \right\|_{\gamma\left(L^2((0,\infty)^2,\frac{dydt}{t^2});X\right)} dz \\
            = & \int_{(0,\infty)\setminus 3I} \left\| f(z)-f_{3I} \right\|_X \left\| \,_\lambda\tau_y (\phi_{(t)})(z) \right\|_{L^2\left(\G_+^{|I|/2}(x),\frac{dydt}{t^2}\right)} dz,
            \quad x \in I.
    \end{align*}
    Furthermore, by \eqref{B3},
    \begin{align*}
        \left\| \,_\lambda\tau_y (\phi_{(t)})(z) \right\|_{L^2\left(\G_+^{|I|/2}(x),\frac{dydt}{t^2}\right)}
            \leq & C \left( \int_{\G_+^{|I|/2}(x)} \frac{dydt}{|y-z|^4}  \right)^{1/2}
            \leq  C \frac{|I|}{|x_I-z|^2}, \quad x \in I, \ z \in (0,\infty)\setminus 3I.
    \end{align*}
    Hence, by \cite[Lemma 1.1, (a)]{G} we obtain
    \begin{align*}
        \Big( \frac{1}{|I|}  \int_I A^+(F_2 \big||I|/2)&^q(x) dx \Big)^{1/q}
            \leq C |I| \int_{(0,\infty)\setminus 3I} \frac{\left\| f(z)-f_{3I} \right\|_X}{|x_I-z|^2} dz \\
            \leq & C |I| \sum_{k=0}^\infty \int_{3^k |I| < |x_I-z| \leq 3^{k+1}|I|} \frac{\left\| f(z)-f_{3I} \right\|_X}{|x_I-z|^2} dz \\
            \leq & C \sum_{k=0}^\infty \frac{1}{3^k}
                \left( \frac{1}{3^k |I|}\int_{3^{k+1}I} \left\| f(z)-f_{3^{k+1}I} \right\|_X dz
                    + \left\| f_{3I}-f_{3^{k+1}I} \right\|_X \right)\\
            \leq & C \sum_{k=0}^\infty \frac{k}{3^k} \|f\|_{BMO_o(\mathbb{R},X)}
            \leq C \|f\|_{BMO_o(\mathbb{R},X)}.
    \end{align*}

    Next, we analyze \eqref{27.1} for $i=3$. In general, $f_{3I} \#_\lambda \phi_{(t)} \neq 0$, even when the function
    $x^\lambda \phi$ has vanishing integral. This is not the case when considering the usual convolution, see \cite[p. 48]{HW}.\\

    Observe that,
    \begin{align*}
        \frac{1}{|I|} \int_I A^+(F_3 \big||I|/2)^q(x) dx
            = & \|f_{3I}\|^q_X \frac{1}{|I|} \int_I \left( \int_{\G_+^{|I|/2}(x)} \left|\int_0^\infty \,_\lambda\tau_y (\phi_{(t)})(z) dz\right|^2 \frac{dydt}{t^2} \right)^{q/2} dx .
    \end{align*}

    In addition, taking into account that
    $$\|f_{3I}\|_X \leq C \frac{x_I+|I|}{|I|} \|f\|_{BMO_o(\mathbb{R},X)},$$
    and writing
    \begin{align*}
        \left|\int_0^\infty \,_\lambda\tau_y (\phi_{(t)})(z) dz\right|
            \leq & \int_0^{y/2} \left|\,_\lambda\tau_y (\phi_{(t)})(z) \right|dz
                + \left|\int_{y/2}^{2y} \,_\lambda\tau_y (\phi_{(t)})(z) dz\right|
               + \int_{2y}^\infty \left|\,_\lambda\tau_y (\phi_{(t)})(z) \right|dz \\
              = & \sum_{j=1}^3 J_j(y,t), \quad t,y \in (0,\infty),
    \end{align*}
    it is enough to prove that
    \begin{equation}\label{objt3}
         \frac{(x_I+|I|)^q}{|I|^{q+1}} \int_I \left( \int_0^{|I|/2} \int_{\max\{0,x-t\}}^{x+t} \left|J_j(y,t)\right|^2 \frac{dydt}{t^2} \right)^{q/2} dx
            \leq C, \quad j=1,2,3,
    \end{equation}
    being $C>0$ a constant independent of $I$ and $f$.\\

    By \eqref{B3} we can obtain the following estimations
    \begin{equation}\label{J1}
        J_1(y,t)
            \leq C \int_0^{y/2} \frac{(yz)^\lambda t}{[(y-z)^2+t^2]^{\lambda+1}} dz
            \leq C  \frac{y^{2\lambda+1} t}{(y^2+t^2)^{\lambda+1}}
            \leq C \frac{t}{t+y}, \quad t,y \in (0,\infty),
    \end{equation}
    \begin{equation}\label{J2}
        J_2(y,t)
            \leq C \int_{y/2}^{2y} \frac{(yz)^\lambda t}{[(y-z)^2+t^2]^{\lambda+1}} dy
            \leq C  \left( \frac{y}{t}\right)^{2\lambda+1}, \quad t,y \in (0,\infty),
    \end{equation}
    and
    \begin{align}\label{J3}
        J_3(y,t)
            \leq & C \int_{2y}^\infty \frac{(yz)^\lambda t}{[(y-z)^2+t^2]^{\lambda+1}} dz
            \leq  C y^\lambda t \int_{2y}^\infty \frac{z^\lambda }{(z^2+t^2)^{\lambda+1}} dz \nonumber \\
            \leq & C y^\lambda t \int_{2y}^\infty \frac{1 }{(z+t)^{\lambda+2}} dz
            \leq C \frac{y^\lambda t}{(y+t)^{\lambda+1}}
            \leq C \frac{t}{t+y}, \quad t,y \in (0,\infty).
    \end{align}
    Furthermore we will need the relation
    \begin{equation}\label{J2.2}
        J_2(y,t)
            \leq C  \frac{t}{y}, \quad t,y \in (0,\infty).
    \end{equation}
    To obtain this bound we have to proceed in a more involved way.
    We keep the same notation introduced in the proof of Lemma~\ref{Lem_Lp}. We have that
    \begin{align*}
        \left| \mathcal{K}_2(z;y,t) \right|
            & \leq C \frac{(yz)^\lambda}{t^{2\lambda+1}} \int_{\pi/2}^\pi (\sin \theta)^{2\lambda-1} \left( \frac{t^2}{y^2+z^2-2yz\cos \theta} \right)^{\lambda+1} d\theta
            \leq C \frac{t}{zy}, \quad t,y,z \in (0,\infty),
    \end{align*}
    and
    \begin{align*}
        \left| \mathcal{K}_{1,4}(z;y,t) \right|
            & \leq C \frac{(yz)^\lambda}{t^{2\lambda+1}} \int_{\pi/2}^\infty \theta^{2\lambda-1} \left( \frac{t^2}{yz\theta^2} \right)^{\lambda+1} d\theta
            \leq C \frac{t}{zy}, \quad t,y,z \in (0,\infty).
    \end{align*}
    Also by \cite[p. 483]{BCFR2} we obtain
    \begin{align*}
        \Big| \mathcal{K}_1(z;y,t) - \mathcal{K}_{1,1}(z;y,t) \Big|
            \leq &  C \frac{(yz)^\lambda}{t^{2\lambda+1}} \int_0^{\pi/2} \left| (\sin \theta)^{2\lambda-1} - \theta^{2\lambda-1} \right| \left( \frac{t^2}{t^2+(z-y)^2+2yz(1-\cos \theta)} \right)^{\lambda+1} d\theta \\
            \leq & t (yz)^\lambda \int_0^{\pi/2} \frac{\theta^{2\lambda+1}}{[t^2+(z-y)^2+zy\theta^2]^{\lambda+1}} d\theta \\
            \leq & C \frac{t}{zy} \left( 1 + \log_+ \frac{z}{|z-y|} \right), \quad t \in (0,\infty), \ y/2<z<2y,
    \end{align*}
    and
    \begin{align*}
        \Big| \mathcal{K}_{1,1}(z;y,t) - \mathcal{K}_{1,2}(z;y,t) \Big|
            \leq &  C \frac{(yz)^\lambda}{t^{2\lambda+1}} \int_0^{\pi/2}  \theta^{2\lambda-1}
                \left| \Phi\left( \frac{(y-z)^2+2yz(1-\cos \theta)}{t^2} \right) - \Phi\left( \frac{(y-z)^2+yz\theta^2}{t^2} \right)\right| d\theta \\
            \leq &  C \frac{(yz)^\lambda}{t^{2\lambda+1}} \int_0^{\pi/2}  \theta^{2\lambda+3} \frac{yz}{t^2}
                \left( \frac{t^2}{t^2+(y-z)^2+yz\theta^2} \right)^{\lambda+2}  d\theta \\
            \leq & t (yz)^\lambda \int_0^{\pi/2} \frac{\theta^{2\lambda+1}}{[t^2+(z-y)^2+zy\theta^2]^{\lambda+1}} d\theta \\
            \leq & C \frac{t}{zy} \left( 1 + \log_+ \frac{z}{|z-y|} \right), \quad t \in (0,\infty), \ y/2<z<2y.
    \end{align*}
    With all the above estimations and \eqref{nula} we get
    \begin{align*}
        |J_2(y,t)|
            \leq & \Big| \int_{y/2}^{2y} \Big(\mathcal{K}_2(z;y,t) + \left( \mathcal{K}_1(z;y,t) - \mathcal{K}_{1,1}(z;y,t) \right)
                    + \left( \mathcal{K}_{1,1}(z;y,t) - \mathcal{K}_{1,2}(z;y,t) \right) \\
                 &  + \Psi_t(y-z) - \mathcal{K}_{1,4}(z;y,t) \Big) dz \Big| \\
            \leq & C \Big( \frac{t}{y}  \int_{y/2}^{2y} \frac{1}{z}\left( 1 + \log_+ \frac{z}{|z-y|} \right) dz
                    + \left| \int_\mathbb{R} \Psi_t(y-z) dz \right|  + \int_{(-\infty,y/2) \cup (2y,\infty)} |\Psi_t(y-z)| dz \Big)    \\
            \leq & C \frac{t}{y}, \quad t,y \in (0,\infty).
    \end{align*}

    We are ready to prove \eqref{objt3}. It is important to analyze carefully the region of integration, that is the truncated cone
    $\G_+^{|I|/2}(x)$, $x \in (0,\infty)$. We distinguish two cases. Assume first $2|I|<x_I$. Then, using estimates \eqref{J1}, \eqref{J3}
    and \eqref{J2.2} we get, for $j=1,2,3$,
    \begin{align*}
        \frac{(x_I+|I|)^q}{|I|^{q+1}}  &\int_I \left( \int_0^{|I|/2} \int_{\max\{0,x-t\}}^{x+t} \left|J_j(y,t)\right|^2 \frac{dydt}{t^2} \right)^{q/2} dx
           \leq C \frac{x_I^q}{|I|^{q+1}} \int_I \left( \int_0^{|I|/2} \int_{x_I-|I|}^{x_I+|I|} \frac{t^2}{y^2} \frac{dydt}{t^2} \right)^{q/2} dx \\
           \leq & C \frac{x_I^q}{|I|^{q/2}} \left( \frac{1}{x_I-|I|} - \frac{1}{x_I+|I|}\right)^{q/2}
           \leq C \frac{x_I^q}{|I|^{q/2}} \frac{|I|^{q/2}}{(x_I-|I|)^q}
           \leq C.
    \end{align*}

    Suppose now that $|I|/2 \leq x_I \leq 2|I|$. If $x \in I$, then $x_I-|I|/2<x<|I|/2$ or $|I|/2 \leq x< x_I+|I|/2$. We are going to consider each
    situation separately. In the sequel $\int_a^b g(z)dz=0$ provided that $a \geq b$.
    By \eqref{J1} and \eqref{J3} we can write, for $j=1,3$,
    \begin{align*}
        \frac{(x_I+|I|)^q}{|I|^{q+1}}  &\int_{x_I-|I|/2}^{|I|/2} \left( \int_0^{|I|/2} \int_{\max\{0,x-t\}}^{x+t} \left|J_j(y,t)\right|^2 \frac{dydt}{t^2} \right)^{q/2} dx \\
            \leq & \frac{C}{|I|}  \int_{0}^{|I|/2} \left( \left\{ \int_0^x \int_{x-t}^{x+t} + \int_x^{|I|/2} \int_0^{x+t} \right\} \frac{dydt}{(t+y)^2} \right)^{q/2} dx \\
            \leq & C \left(\frac{1}{|I|}  \int_{0}^{|I|/2} \left(  \int_0^x \int_0^{2x} \frac{dydt}{x^2} \right)^{q/2} dx
                    + \frac{1}{|I|}  \int_{0}^{|I|/2} \left( \int_x^{|I|/2} \int_0^{2t}  \frac{dydt}{t^2} \right)^{q/2} dx \right) \\
            \leq & C \left(1 + \frac{1}{|I|}  \int_{0}^{|I|/2} \left( \log \frac{|I|}{2x} \right)^{q/2} dx \right)
            \leq C \left(1 + \frac{1}{|I|}  \int_{0}^{|I|/2} \left( \frac{|I|}{2x} \right)^{1/2} dx \right)
            \leq C,
    \end{align*}
    because $\log z \leq z^\alpha$, for every $z>0$ and $\alpha>0$.
    If $j=2$ we apply \eqref{J2} and \eqref{J2.2} to obtain
    \begin{align*}
        \frac{(x_I+|I|)^q}{|I|^{q+1}}  &\int_{x_I-|I|/2}^{|I|/2} \left( \int_0^{|I|/2} \int_{\max\{0,x-t\}}^{x+t} \left|J_2(y,t)\right|^2 \frac{dydt}{t^2} \right)^{q/2} dx \\
            \leq &  \frac{C}{|I|}  \int_{0}^{|I|/2} \left( \left\{\int_0^{x/2} \int_{x-t}^{x+t} + \int_{x/2}^x \int_{x-t}^{x+t}+ \int_x^{|I|/2} \int_0^{x+t} \right\} \left|J_2(y,t)\right|^2 \frac{dydt}{t^2} \right)^{q/2} dx \\
            \leq & C \left( \frac{1}{|I|}  \int_{0}^{|I|/2} \left( \int_0^{x/2} \int_{x-t}^{x+t}  \frac{dydt}{y^2} \right)^{q/2} dx
                        +   \frac{1}{|I|}  \int_{0}^{|I|/2} \left( \int_{x/2}^x \int_0^{2x} \left( \frac{y}{t} \right)^{4\lambda+2} \frac{dydt}{t^2} \right)^{q/2} dx \right. \\
                 & \left.      +   \frac{1}{|I|}  \int_{0}^{|I|/2} \left(  \int_x^{|I|/2} \int_0^{2t} \left( \frac{y}{t} \right)^{4\lambda+2} \frac{dydt}{t^2} \right)^{q/2} dx \right)\\
            \leq & C \left( \frac{1}{|I|}  \int_{0}^{|I|/2} \left( \int_0^{x/2} \int_{0}^{2x}  \frac{dydt}{x^2} \right)^{q/2} dx
                        +   \frac{1}{|I|}  \int_{0}^{|I|/2} \left( \int_{x/2}^x  \frac{x^{4\lambda+3}}{t^{4\lambda+4}} dt \right)^{q/2} dx \right. \\
                 & \left.      +   \frac{1}{|I|}  \int_{0}^{|I|/2} \left(  \int_x^{|I|/2} \int_0^{2t} \frac{dydt}{t^2} \right)^{q/2} dx \right)
            \leq  C.
    \end{align*}

    On the other hand, applying again \eqref{J1} and \eqref{J3}, for $j=1,3$, we get
    \begin{align*}
        \frac{(x_I+|I|)^q}{|I|^{q+1}}  &\int_{|I|/2}^{x_I+|I|/2} \left( \int_0^{|I|/2} \int_{\max\{0,x-t\}}^{x+t} \left|J_j(y,t)\right|^2 \frac{dydt}{t^2} \right)^{q/2} dx \\
            \leq & \frac{C}{|I|}  \int_{|I|/2}^{3|I|} \left( \int_0^{|I|/2} \int_{x-t}^{x+t} \frac{dydt}{(t+y)^2} \right)^{q/2} dx
            \leq  \frac{C}{|I|}  \int_{|I|/2}^{3|I|} \left( \int_0^{|I|/2} \int_{0}^{4|I|} \frac{dydt}{|I|^2} \right)^{q/2} dx
            \leq C,
    \end{align*}
    and by \eqref{J2} and \eqref{J2.2}, it follows that
    \begin{align*}
        \frac{(x_I+|I|)^q}{|I|^{q+1}}  & \int_{|I|/2}^{x_I+|I|/2} \left( \int_0^{|I|/2} \int_{\max\{0,x-t\}}^{x+t} \left|J_2(y,t)\right|^2 \frac{dydt}{t^2} \right)^{q/2} dx \\
            \leq & C \left( \frac{1}{|I|}   \int_{|I|/2}^{3|I|} \left( \int_0^{|I|/4} \int_{x-t}^{x+t} \frac{dydt}{y^2} \right)^{q/2} dx
                  +  \frac{1}{|I|}   \int_{|I|/2}^{3|I|} \left( \int_{|I|/4}^{|I|/2} \int_{x-t}^{x+t} \left( \frac{y}{t} \right)^{4\lambda+2} \frac{dydt}{t^2} \right)^{q/2} dx \right) \\
            \leq & C \left( \frac{1}{|I|}   \int_{|I|/2}^{3|I|} \left( \int_0^{|I|/4} \int_{0}^{4|I|} \frac{dydt}{|I|^2} \right)^{q/2} dx
                  +  \frac{1}{|I|}   \int_{|I|/2}^{3|I|} \left( \int_{|I|/4}^{|I|/2} \int_{0}^{4|I|}  \frac{y^{4\lambda+2}}{|I|^{4\lambda+4}}  dydt \right)^{q/2} dx \right) \\
            \leq & C.
    \end{align*}
    Note that all the constants $C$ that appear do not depend on $I$ and $f$.
    This shows \eqref{objt3} and therefore the proof of \eqref{objt2} is finished.
    \begin{flushright}
        \qed
    \end{flushright}

    \subsection{Proof of Theorem~\ref{Th_principal}, $\pmb{(ii)}$} \label{subsec:ii}

    Suppose that $C_q^+(f \#_\lambda \phi_{(t)}) \in L^\infty(0,\infty)$, for some $0<q<\infty$. Since $X$
    is a UMD space, it is also reflexive and therefore  $BMO_o(\mathbb{R},X)$ is the dual space of
    $H_o^1(\mathbb{R},X^*)$ (see \cite{Bl} and \cite[p. 466]{BCFR2}). If $L^\infty_{c,o}(\mathbb{R})$ denotes
    the space of odd bounded functions with compact support in $\mathbb{R}$, then $L^\infty_{c,o}(\mathbb{R})$
    is a dense subspace of $H^1_{o}(\mathbb{R})$. Hence, according to \cite[Lemma 2.4]{Hy},
    $L^\infty_{c,o}(\mathbb{R}) \otimes X^*$ is a dense subspace of $H^1_{o}(\mathbb{R},X^*)$. Our objective is to
    see that $f \in BMO_o(\mathbb{R},X)$. In order to prove this it is sufficient to show that, for a certain
    $C>0$,
    $$\left| \int_\mathbb{R} \langle f(x),g(x) \rangle dx \right|
        \leq C \|g\|_{H^1_{o}(\mathbb{R},X^*)}, \quad g \in L^\infty_{c,o}(\mathbb{R}) \otimes X^*.$$
    Let $g \in L^\infty_{c,o}(\mathbb{R}) \otimes X^*$.
    We can write $g=\sum_{i=1}^N a_i g_i$, where $a_i \in X^*$ and $g_i \in L^\infty_{c,o}(\mathbb{R})$,
    $i=1, \dots, N$. It is clear that
    $$\int_\mathbb{R} \langle f(x),g(x) \rangle dx
        = 2 \int_0^\infty \langle f(x),g(x) \rangle dx
        = 2 \sum_{i=1}^N \int_0^\infty f_i(x)g_i(x) dx,$$
    where $f_i(x)=\langle f(x),a_i \rangle$, $x \in \mathbb{R}$. Let $i=1, \dots, N$.
    We have that $(1+x^2)^{-1}f_i \in L^1(0,\infty)$.
    Since $\mathbb{C}$ is a UMD space, by using Lemma~\ref{Lem_L1}, $A^+(g_i \#_\lambda \psi_{(t)}) \in L^1(0,\infty)$,
    for every $\psi \in S_\lambda(0,\infty)$ such that $\int_0^\infty x^\lambda \psi(x)dx=0$.
    Moreover,
    $$\|C_q^+(f_i \#_\lambda \phi_{(t)})\|_{L^\infty(0,\infty)}
        \leq \|a_i\|_{X^*} \|C_q^+(f \#_\lambda \phi_{(t)})\|_{L^\infty(0,\infty)}
        < \infty.$$
    We choose a function $\psi \in S_\lambda(0,\infty)$ that is complementary to $\phi$
    and such that $\int_0^\infty x^\lambda \psi(x)dx=0$.
    By applying now Lemma~\ref{Lem_polarizacion} we get
    \begin{align*}
        \int_\mathbb{R} \langle f(x),g(x) \rangle dx
            = & 2\sum_{i=1}^N \int_0^\infty \int_0^\infty (f_i \#_\lambda \phi_{(t)})(y) (g_i \#_\lambda \psi_{(t)})(y) \frac{dydt}{t} \\
            = & 2\sum_{i=1}^N \int_0^\infty \int_0^\infty  \left\langle (f \#_\lambda \phi_{(t)})(y) , a_i \right\rangle    (g_i \#_\lambda \psi_{(t)})(y) \frac{dydt}{t} \\
            = & 2 \int_0^\infty \int_0^\infty  \left\langle (f \#_\lambda \phi_{(t)})(y) , \sum_{i=1}^N a_i (g_i \#_\lambda \psi_{(t)})(y) \right\rangle     \frac{dydt}{t} \\
            = & 2\int_0^\infty \int_0^\infty \langle (f \#_\lambda \phi_{(t)})(y) , (g \#_\lambda \psi_{(t)})(y) \rangle \frac{dydt}{t}.
    \end{align*}
    According to Lemma~\ref{Lem_L1} and Lemma~\ref{Lem_4.8}, it follows that
    \begin{align*}
        \left| \int_\mathbb{R} \langle f(x),g(x) \rangle dx \right|
            \leq & C \int_0^\infty \int_0^\infty \left| \langle (f \#_\lambda \phi_{(t)})(y) , (g \#_\lambda \psi_{(t)})(y) \rangle \right| \frac{dydt}{t} \\
            \leq & C \int_0^\infty C_q^+(f \#_\lambda \phi_{(t)})(x) A^+(g \#_\lambda \psi_{(t)})(x)  dx \\
            \leq & C \|C_q^+(f \#_\lambda \phi_{(t)})\|_{L^\infty(0,\infty)} \| A^+(g \#_\lambda \psi_{(t)}) \|_{L^1(0,\infty)} \\
            \leq & C \|C_q^+(f \#_\lambda \phi_{(t)})\|_{L^\infty(0,\infty)} \| g \|_{H_o^1(\mathbb{R},X^*)}.
    \end{align*}
    Hence, we conclude that $f \in BMO_o(\mathbb{R},X)$ and
    $$\|f\|_{BMO_o(\mathbb{R},X)}
        \leq C \|C_q^+(f \#_\lambda \phi_{(t)})\|_{L^\infty(0,\infty)}.$$
    \begin{flushright}
        \qed
    \end{flushright}

    \section{Proof of Theorem~\ref{Th_caract}}  \label{sec:Poisson}

    \subsection{Proof of $\pmb{(i) \Rightarrow (ii)}$}
        Assume that $X$ is a UMD Banach space and $f$ is an odd $X$-valued function such that $(1+x^2)^{-1}f \in L^1(\mathbb{R},X)$.
        According to Theorem~\ref{Th_Poisson} we need only to show that, for a certain $C>0$,
        \begin{equation*}
            \left\|C_q^+\left(\int_0^\infty f(z) t D_{\lambda,z} P_t^\lambda(z,y) dz\right)\right\|_{L^\infty(0,\infty)}
                \leq C \|f\|_{BMO_o(\mathbb{R},X)}.
        \end{equation*}
        We are going to follow the same  procedure as in the proof of $(i)$ in Theorem~\ref{Th_principal}.
        We fix  a bounded interval
        $I=(x_I-|I|/2,x_I+|I|/2) \subset (0,\infty)$ and we decompose $f$ as usual by
        $$f \chi_{(0,\infty)}
            = (f-f_{3I})\chi_{3I} + (f-f_{3I})\chi_{(0,\infty) \setminus 3I} + f_{3I}
            =f_1 + f_2 + f_3,$$
        where $3I=(0,\infty) \cap (x_I-3|I|/2,x_I+3|I|/2)$. Our objective is to see that
        \begin{equation}\label{objt4}
        \left( \frac{1}{|I|} \int_I A^+(G_i \big||I|/2)^q(x) dx \right)^{1/q}
            \leq C \|f\|_{BMO_o(\mathbb{R},X)},
        \end{equation}
        where $C>0$ does not depend on $I$ and $f$, and
        $$G_i(y,t)= \int_0^\infty f_i(z) t D_{\lambda,z} P_t^\lambda(z,y) dz, \quad t,y \in (0,\infty), \quad i=1,2,3.$$

        Lemma~\ref{Lem_Lp_dx} implies \eqref{objt4} for $i=1$. To show \eqref{objt4} for $i=2$
        it is enough to establish that
        \begin{equation}\label{E4}
            \left| t D_{\lambda,z} P_t^\lambda(z,y) \right|
                \leq C \frac{t}{t^2 + (y-z)^2}, \quad t,y,z \in (0,\infty).
        \end{equation}
        We can write
        \begin{align*}
            \left| t D_{\lambda,z} P_t^\lambda(z,y) \right|
                \leq C \left( P_t^\lambda(z,y) +  t (yz)^\lambda \int_0^{\pi}
                    \frac{ (\sin \theta)^{2\lambda-1}y(1-\cos \theta)}{[(y-z)^2+t^2+2yz(1-\cos \theta)]^{\lambda+3/2}} d\theta  \right),
                \ t,y,z \in (0,\infty).
        \end{align*}
        Observe that
        \begin{align}\label{31}
           t (yz)^\lambda & \int_{\pi/2}^\pi\frac{ (\sin \theta)^{2\lambda-1}y(1-\cos \theta)}{[(y-z)^2+t^2+2yz(1-\cos \theta)]^{\lambda+3/2}} d\theta
            \leq  C \frac{t(yz)^\lambda}{t^2+(y-z)^2}  \int_{\pi/2}^\pi\frac{ (\sin \theta)^{2\lambda-1}}{[y^2+z^2+t^2-2yz\cos \theta]^{\lambda}} d\theta \nonumber\\
            \leq & C \frac{t}{t^2+(y-z)^2}  \left( \frac{yz}{y^2+z^2}  \right)^\lambda
            \leq  C \frac{t}{t^2+(y-z)^2}, \quad t,y,z \in (0,\infty).
        \end{align}
        In order to analyze the integral over $(0,\pi/2)$ we distinguish two situations.
        Suppose first that $t,y,z \in (0,\infty)$ and $z \geq |y-z|+t$. Then
        \begin{align}\label{32}
            t (yz)^\lambda & \int_0^{\pi/2} \frac{ (\sin \theta)^{2\lambda-1}y(1-\cos \theta)}{[(y-z)^2+t^2+2yz(1-\cos \theta)]^{\lambda+3/2}} d\theta
                \leq C t y (yz)^\lambda \int_0^{\pi/2} \frac{ \theta^{2\lambda+1}}{[(y-z)^2+t^2+yz\theta^2]^{\lambda+3/2}} d\theta \nonumber \\
                \leq & C \frac{t}{z[(y-z)^2+t^2]^{1/2}}  \int_0^{\infty} \frac{ u^{2\lambda+1}}{(1+u^2)^{\lambda+3/2}} du
                \leq  C \frac{t}{(y-z)^2+t^2}.
        \end{align}
        On the other hand, if $t,y,z \in (0,\infty)$ and $z < |y-z|+t$, we obtain
        \begin{align}\label{33}
            t (yz)^\lambda & \int_0^{\pi/2} \frac{ (\sin \theta)^{2\lambda-1}y(1-\cos \theta)}{[(y-z)^2+t^2+2yz(1-\cos \theta)]^{\lambda+3/2}} d\theta
            \leq C \frac{ t y }{(|y-z|^2+t^2)^{3/2}}\int_0^{\pi/2} \theta^{2\lambda+1}\left(\frac{yz}{yz\theta^2}\right)^{\lambda}  d\theta \nonumber \\
            \leq & C \frac{ t (|y-z|+z) }{(|y-z|^2+t^2)^{3/2}}
            \leq C \frac{ t  }{(y-z)^2+t^2}.
        \end{align}
        By \cite[p. 86]{MS}, \eqref{31}, \eqref{32} and \eqref{33} we conclude that \eqref{E4} holds.\\

        Finally, we are going to show \eqref{objt4} for $i=3$. We have that
        \begin{align*}
            \left| \int_0^\infty t D_{\lambda,z} P_t^\lambda(z,y) dz \right|
                \leq &  \int_0^{y/2} \left|t D_{\lambda,z} P_t^\lambda(z,y)  \right|dz
                   + \left| \int_{y/2}^{2y} t D_{\lambda,z} P_t^\lambda(z,y) dz \right|
                   +  \int_{2y}^\infty \left|t D_{\lambda,z} P_t^\lambda(z,y)\right| dz  \\
                = & \sum_{j=1}^3 \mathcal{M}_j(y,t), \quad t,y \in (0,\infty).
        \end{align*}
        Our objective is to establish for $\mathcal{M}_j$, $j=1,2,3$, estimates similar
        to \eqref{J1}, \eqref{J2}, \eqref{J3} and \eqref{J2.2}. From \eqref{E4} we deduce that
        $$\mathcal{M}_j(y,t) \leq C \frac{t}{t+y}, \quad t,y \in (0,\infty), \ j=1,3.$$
        We also have that
        \begin{align*}
            \mathcal{M}_2(y,t)
                \leq C \int_{y/2}^{2y} \frac{t(zy)^\lambda y}{t^{2\lambda+3}} dz
                \leq C \left( \frac{y}{t} \right)^{2\lambda+2}, \quad t,y \in (0,\infty).
        \end{align*}
        Note that the exponent in the last inequality differs from the one appearing in \eqref{J2},
        but the computations made in the proof $(i)$ in
        Theorem~\ref{Th_principal} can be done in the same way.\\

        Our last step will be to justify that
        \begin{equation*}\label{ls}
            \mathcal{M}_2(y,t)
                \leq C \frac{t}{y}, \quad t,y \in (0,\infty).
        \end{equation*}
        By keeping the notation in the proof of Lemma~\ref{Lem_Lp_dx} we can write
        \begin{align*}
            \mathcal{M}_2(y,t)
            \leq & \Big| \int_{y/2}^{2y} \Big( \mathcal{L}_2(z;y,t) + \left( \mathcal{L}_1(z;y,t) - \mathcal{L}_{1,1}(z;y,t) \right)
                    + \left( \mathcal{L}_{1,1}(z;y,t) - \mathcal{L}_{1,2}(z;y,t) \right) \\
                 &  + \left( \mathcal{L}_{1,6}(z;y,t) - \mathcal{L}_{1,4}(z;y,t) \right)
                    +  t\partial_z (P_t(y-z)) \Big) dz \Big|.
        \end{align*}
        We treat each summand. We get
        \begin{align*}
            |\mathcal{L}_2(z;y,t)|
                \leq & C t^2 (yz)^\lambda  \int_{\pi/2}^\pi \frac{ (\sin \theta)^{2\lambda-1}(|z-y|+y)}{[(y-z)^2+t^2+2yz(1-\cos \theta)]^{\lambda+2}} d\theta \\
                \leq & C t (yz)^\lambda  \int_{\pi/2}^\pi \frac{ (\sin \theta)^{2\lambda-1}}{[y^2+z^2+t^2-2yz\cos \theta]^{\lambda+1}} d\theta \\
                \leq & C \frac{t}{yz} \left( \frac{yz}{y^2+z^2} \right)^{\lambda+1}
                \leq  C \frac{t}{yz}, \quad t,y,z \in (0,\infty).
        \end{align*}
        Also, by \cite[p. 483]{BCFR2} it follows that
        \begin{align*}
            |\mathcal{L}_1(z;y,t) - \mathcal{L}_{1,1}(z;y,t)|
                \leq & C t^2 (yz)^\lambda  \int_{0}^{\pi/2} \frac{ \theta^{2\lambda+1}(|z-y|+y\theta^2)}{[(y-z)^2+t^2+yz\theta^2]^{\lambda+2}} d\theta \\
                \leq & C t (yz)^\lambda  \int_{0}^{\pi/2} \frac{ \theta^{2\lambda+1}}{[(y-z)^2+t^2+yz\theta^2]^{\lambda+1}} d\theta \\
                \leq & C \frac{t}{yz} \left( 1 + \log_+ \frac{z}{|z-y|} \right), \quad t \in (0,\infty), \ y/2<z<2y,
        \end{align*}
        and
        \begin{align*}
            |\mathcal{L}_{1,1}(z;y,t) - \mathcal{L}_{1,2}(z;y,t)|
                \leq & C t^2 (yz)^\lambda  \int_{0}^{\pi/2} \frac{ \theta^{2\lambda+3}y}{[(y-z)^2+t^2+yz\theta^2]^{\lambda+2}} d\theta \\
                \leq & C \frac{t}{yz} \left( 1 + \log_+ \frac{z}{|z-y|} \right), \quad t \in (0,\infty), \ y/2<z<2y.
        \end{align*}
        Moreover, by \eqref{GB} we obtain
        \begin{align*}
            |\mathcal{L}_{1,6}(z;y,t) - \mathcal{L}_{1,4}(z;y,t)|
                \leq & C \left( \mathcal{I}_\lambda(z;y,t) + t(yz)^\lambda  \int_{\pi/2}^\infty \frac{ \theta^{2\lambda-1}}{[(y-z)^2+t^2+yz\theta^2]^{\lambda+1}} d\theta  \right),
                \ t,y,z \in (0,\infty),
        \end{align*}
        where $\mathcal{I}_\lambda$ is defined in \eqref{GC}. We have that
        \begin{align*}
            t(yz)^\lambda  \int_{\pi/2}^\infty \frac{ \theta^{2\lambda-1}}{[(y-z)^2+t^2+yz\theta^2]^{\lambda+1}} d\theta
                \leq C t(yz)^\lambda  \int_{\pi/2}^\infty \frac{ \theta^{2\lambda-1}}{(yz\theta^2)^{\lambda+1}} d\theta
                \leq C \frac{t}{yz}, \ t,y,z \in (0,\infty).
        \end{align*}
        Also, we write
        \begin{align*}
            \mathcal{I}_\lambda(z;y,t)
                = & t^2(yz)^\lambda  \int_0^\infty \frac{ \theta^{2\lambda+1}y}{[(y-z)^2+t^2+yz\theta^2]^{\lambda+2}} d\theta
                  - t^2(yz)^\lambda  \int_{\pi/2}^\infty \frac{ \theta^{2\lambda+1}y}{[(y-z)^2+t^2+yz\theta^2]^{\lambda+2}} d\theta \\
                = & \mathcal{I}^1_\lambda(z;y,t) - \mathcal{I}^2_\lambda(z;y,t), \quad t,y,z \in (0,\infty).
        \end{align*}
        The change of variables $u=\theta \sqrt{zy/((z-y)^2+t^2)}$ leads to
        \begin{align*}
            \mathcal{I}^1_\lambda(z;y,t)
                \leq & C t^2(yz)^{\lambda-1/2}  \int_0^\infty \frac{ \theta^{2\lambda-1}}{[(y-z)^2+t^2+yz\theta^2]^{\lambda+1}} d\theta
                =C  \frac{t^2}{\sqrt{zy}} \frac{1}{(z-y)^2+t^2} \int_0^\infty \frac{u^{2\lambda-1}}{(1+u^2)^{\lambda+1}} dy\\
                \leq & C \frac{t}{y} P_t(y-z), \quad t \in (0,\infty), \ y/2<z<2y.
        \end{align*}
        We have that
        \begin{align*}
            \mathcal{I}^2_\lambda(z;y,t)
                \leq & C t(yz)^{\lambda}  \int_{\pi/2}^\infty \frac{ \theta^{2\lambda+1}y}{(yz\theta^2)^{\lambda+3/2}} d\theta
                \leq  C \frac{t}{yz}, \quad t \in (0,\infty), \ y/2<z<2y.
        \end{align*}
        Putting together the above estimates and taking into account that
        $\int_\mathbb{R} \partial_z P_t(u-z)du=0$, $z \in \mathbb{R}$, we obtain
        \begin{align*}
            \mathcal{M}_2(y,t)
                \leq & C \left[ \frac{t}{y} \int_{y/2}^{2y} \left( \frac{1}{z}\left( 1 + \log_+ \frac{z}{|z-y|} \right) + P_t(y-z) \right)  dz
                                      +      \int_{(-\infty,y/2) \cup (2y,\infty)} P_t(y-z) dz \right] \\
                \leq & C \frac{t}{y} \left[  \int_{1/2}^{2} \frac{1}{u}\left( 1 + \log_+ \frac{1}{|1-u|} \right)du + \int_{\mathbb{R}}P_t(u)du + 1 \right] \\
                \leq & C \frac{t}{y}, \quad t,y \in (0,\infty).
        \end{align*}
        Then, \eqref{objt4} for $i=3$ can be proved by proceeding as in the proof of the corresponding
        property in Theorem~\ref{Th_principal}.\\

        Thus the proof of this part of Theorem~\ref{Th_caract} is completed.
    \begin{flushright}
        \qed
    \end{flushright}

    \subsection{Proof of $\pmb{(ii) \Rightarrow (i)}$}
        According to \cite[Theorem 2.1]{BFMT}, in order to see that $X$ is a UMD Banach space,
        it is sufficient to show that the Riesz transform $R_\lambda$
        associated with the Bessel operator $\Delta_\lambda$, can be extended to $L^p((0,\infty),X)$ as a bounded
        operator from $L^p((0,\infty),X)$ into itself, for some $1<p<\infty$. We recall that
        $R_\lambda f =D_\lambda \Delta_\lambda^{-1/2} f $, for every $f \in C^\infty_c(0,\infty)$, where
        $D_\lambda=x^\lambda \frac{d}{dx} x^{-\lambda}$ and
        $$\Delta^{-1/2}_\lambda f(x)
            = \int_0^\infty P_t^\lambda(f)(x)dt, \quad f \in C^\infty_c(0,\infty) \text{ and } x \in (0,\infty).$$

        $R_\lambda$ is a Calderón-Zygmund operator (\cite[Theorem 4.2 and Corollary 4.4]{BBFMT})
        and it can be extended to $L^p(0,\infty)$, $1 \leq p < \infty$, as the principal value integral operator
        $$R_\lambda f(x)
            = \lim_{\varepsilon \to 0^+} \int_{0, |x-y|>\varepsilon}^\infty R_\lambda(x,y) f(y)dy,
            \quad \text{a.e. } x \in (0,\infty),$$
        for every $f \in L^p(0,\infty)$, $1 \leq p < \infty$, where
        $$R_\lambda(x,y)
            = x^\lambda \int_0^\infty \partial_x (x^{-\lambda }P_t^\lambda(x,y)) dt, \quad x,y \in (0,\infty), \ x \neq y.$$
        $R_\lambda$ is a bounded operator from $L^p(0,\infty)$ into itself, for every $1<p<\infty$, and
        from $L^1(0,\infty)$ into $L^{1,\infty}(0,\infty)$. We denote by $R_\lambda^*$ the adjoint operator of
        $R_\lambda$ in $L^2(0,\infty)$. $R_\lambda^*$ is a principal value integral operator given by, for every $g \in L^2(0,\infty)$,
        $$R_\lambda^* g(x)
            = \lim_{\varepsilon \to 0^+} \int_{0, |x-y|>\varepsilon}^\infty R_\lambda(y,x) g(y)dy,
            \quad \text{a.e } x \in (0,\infty).$$
        We can see that
        $$R_\lambda^* g = D_\lambda^* \Delta_{\lambda+1}^{-1/2} g, \quad g \in C^\infty_c(0,\infty).$$
        Since $\Delta_{\lambda+1}=D_\lambda D_\lambda^*$, $R_\lambda^*$ is the Riesz transformation associated with the Bessel operator $\Delta_{\lambda+1}$. Moreover, $R^*_\lambda$ is a Calderón-Zygmund operator (see \cite[Proposition 4.1]{BBFMT}).\\

        $R_\lambda^*$ is extended to $L^\infty_c(0,\infty) \otimes X$ in the natural way. Our objective is to show that
        \begin{equation}\label{H1}
            \|(R_\lambda^*f)_o\|_{BMO_o(\mathbb{R},X)}
                \leq C \|f_o\|_{BMO_o(\mathbb{R},X)}, \quad f \in L^\infty_c(0,\infty) \otimes X,
        \end{equation}
        where $(R_\lambda^*f)_o$ and $f_o$ denote the odd extensions to $\mathbb{R}$ of $R_\lambda^*f$ and $f$,
        respectively. \eqref{H1} implies that $X$ is a UMD space. Indeed, from \eqref{H1} we deduce that
        \begin{equation}\label{H2}
            \|R_\lambda^*f\|_{BMO((0,\infty),X)}
                \leq C \|f\|_{L^\infty((0,\infty),X)}, \quad f \in L^\infty_c(0,\infty) \otimes X.
        \end{equation}
        Suppose that $E$ is a finite dimensional subspace of $X$. Then,
        $L^\infty_c(0,\infty) \otimes E = L^\infty_c((0,\infty),E)$. By \eqref{H2} we have that
        \begin{equation}\label{H3}
            \|R_\lambda^*f\|_{BMO((0,\infty),E)}
                \leq C \|f\|_{L^\infty((0,\infty),E)}, \quad f \in L^\infty_c((0,\infty),E).
        \end{equation}
        Note that the constant $C$ in \eqref{H3} does not depend on the subspace $E$. Since $R_\lambda^*$ is a Calderón-Zygmund operator,
        \cite[Theorem 4.1]{MTX} allows us to obtain that
        \begin{equation*}
            \|R_\lambda^*f\|_{L^2((0,\infty),E)}
                \leq C \|f\|_{L^2((0,\infty),E)}, \quad f \in L^\infty_c((0,\infty),E),
        \end{equation*}
        where $C>0$ is independent of $E$.\\

        Hence, we conclude that
        \begin{equation*}
            \|R_\lambda^*f\|_{L^2((0,\infty),X)}
                \leq C \|f\|_{L^2((0,\infty),X)}, \quad f \in L^\infty_c(0,\infty) \otimes X,
        \end{equation*}
        and, since \cite[Theorem 2.1]{BBFMT} also works when $R_\lambda^*$ replaces $R_\lambda$, $X$ is UMD.\\

        We are going to prove \eqref{H1}. Let $f \in L^\infty_c(0,\infty) \otimes X$.
        The odd extension $f_o$ of $f$ to $\mathbb{R}$ can be written $f=\sum_{j=1}^N b_j f_j$,
        where $f_j \in L^\infty_{c,o}(\mathbb{R})$ and $b_j \in X$, $j=1, \dots, N \in \mathbb{N}$.
        We have that
        $$R_\lambda^*(f)= \sum_{j=1}^N b_j R_\lambda^*(f_j),$$
        on $(0,\infty)$.\\

        Since $R^*_\lambda$ is a bounded operator from $L^2(0,\infty)$ into itself (\cite[Theorem 4.2]{BBFMT}),
        Hölder's inequality leads to
        \begin{align*}
            \int_\mathbb{R} \frac{\|R_\lambda^*(f)(x)\|_X}{1+x^2} dx
            \leq & C \sum_{j=1}^N \|b_j\|_X \|f_j\|_{L^2(0,\infty)}.
        \end{align*}
       Hence, we can apply \eqref{tdt} to obtain
        \begin{equation}\label{H7}
            \|(R_\lambda^*(f))_o\|_{BMO_o(\mathbb{R},X)}
                \leq C \|C_q^+(t \partial_t P_t^\lambda(R_\lambda^*(f))) \|_{L^\infty(0,\infty)}.
        \end{equation}

        We define the function $Q_t^\lambda(f)$ by
        $$Q_t^\lambda(f)(x)
            = \int_0^\infty Q_t^\lambda(x,y) f(y)dy
            = \sum_{j=1}^N b_j \int_0^\infty Q_t^\lambda(x,y) f_j(y)dy, \quad t,x \in (0,\infty),$$
        where
        $$Q_t^\lambda(x,y)
            = \frac{2\lambda}{\pi} (xy)^\lambda
            \int_0^\pi \frac{(x-y\cos \theta)(\sin \theta)^{2\lambda-1}}{(x^2+y^2+t^2-2xy\cos \theta)^{\lambda+1}}d\theta,
            \quad t,x,y \in (0,\infty).$$
        The function $Q_t^\lambda(f)$ is called $\Delta_\lambda$-conjugate to the Poisson integral $P_t^\lambda(f)$,
        because of the following Cauchy-Riemann type equations
        $$ D_{\lambda} P_t^\lambda(f) = \partial_t Q_t^\lambda(f),
            \quad
           D^*_{\lambda} Q_t^\lambda(f) = \partial_t P_t^\lambda(f),$$
        being $D^*_\lambda=-x^{-\lambda} \frac{d}{dx} x^{\lambda}$.\\

        We define the function
        $\mathbb{Q}_t^\lambda(g)$ by
        $$\mathbb{Q}_t^\lambda(g)(x)
            = \int_0^\infty Q_t^\lambda(y,x) g(y)dy, \quad t,x \in (0,\infty),$$
        where $g \in L^2(0,\infty)$. The following Cauchy--Riemann type equations are satisfied
        $$ D^*_{\lambda,x} P_t^{\lambda+1}(g) = \partial_t \mathbb{Q}_t^\lambda(g),
           \quad
           D_{\lambda,x} \mathbb{Q}_t^\lambda(g) = \partial_t P_t^{\lambda+1}(g).$$
        Moreover, by using the Hankel transformation (see \cite[(16.5)]{MS})
        we can prove that
        $$P_t^\lambda(R_\lambda^* g)
            = \mathbb{Q}_t^\lambda(g), \quad g \in L^2(0,\infty).$$
        The above relations allows us to write
        $$\partial_t P_t^\lambda(R_\lambda^* f)(x)
            = \partial_t \mathbb{Q}_t^\lambda (f)(x)
            = D^*_{\lambda,x} P_t^{\lambda+1}(f)(x), \quad t,x \in (0,\infty).$$
        Also, from \cite[(5.3.5)]{L} it follows that
        $$D^*_{\lambda,x} P_t^{\lambda+1}(f)(x)
            = \int_0^\infty D_{\lambda,z}P_t^\lambda(x,z)f(z)dz, \quad t,x \in (0,\infty). $$
        Hence, from \eqref{tdx} we deduce that
        \begin{equation}\label{H8}
            \left\|C_q^+(t \partial_t P_t^\lambda(R_\lambda^*(f)))\right\|_{L^\infty(0,\infty)}
            \leq C \|f_o\|_{BMO_o(\mathbb{R},X)}.
        \end{equation}
        By combining \eqref{H7} and \eqref{H8} we get \eqref{H1} and the proof of this part of Theorem~\ref{Th_caract}
        is finished.
    \begin{flushright}
        \qed
    \end{flushright}


\end{document}